
\documentclass[10pt,draft,reqno]{amsart}
     \makeatletter
     \def\section{\@startsection{section}{1}%
     \z@{.7\linespacing\@plus\linespacing}{.5\linespacing}%
     {\bfseries
     \centering
     }}
     \def\@secnumfont{\bfseries}
     \makeatother
\setlength{\textheight}{19.5 cm}
\setlength{\textwidth}{12.5 cm}
\newtheorem{theorem}{Theorem}[section]
\newtheorem{lemma}[theorem]{Lemma}
\newtheorem{prop}[theorem]{Proposition}
\newtheorem{corollary}[theorem]{Corollary}
\theoremstyle{definition}

\theoremstyle{remark}

\numberwithin{equation}{section}
\setcounter{page}{1}
\usepackage{url}
\usepackage{amsthm} 
\usepackage{amsmath} 
\usepackage{amsfonts}  
\usepackage{amssymb}
\usepackage{ifpdf}
\ifpdf
  \usepackage[pdftex]{graphicx}
   \DeclareGraphicsRule{*}{mps}{*}{} 
    \usepackage[pdftex,
bookmarks=true,
bookmarksnumbered=true,
hypertexnames=false,
breaklinks=true,
linkbordercolor={0 0 1},
pdfborder={0 0 1}]{hyperref}
 \else
   \usepackage{graphicx}
 \fi

\newcommand{\ch}{\mbox {\bf 1}}

\newcommand{\R}{{\mathbb{R}}} %
\newcommand{\C}{{\mathbb{C}}}  
\newcommand{\Z}{{\mathbb{Z}}} 
\newcommand{\N}{{\mathbb{N}}}  
\newcommand{\I}{{\mathfrak I}}
  
\newcommand{\ol}[1]{\overline{#1}}  
\newcommand{\im}{\mbox{\rm Im }}

\newcommand{\sgn}{\mbox{sgn}}
\newcommand{\cal}{\mathcal} 
 
\newcommand{\scp}[1]{\langle#1\rangle}
\newtheorem{CMth}{Theorem}

\newcounter{cgm}

\begin{document}
\title[Subcritical Homopolymer]{The Subcritical Phase for a  Homopolymer Model}

\author[Iddo Ben-Ari]{Iddo Ben-Ari*}
\thanks{*This work was partially supported by a grant from the Simons Foundation (\#208728 to Iddo Ben-Ari) }
\address{Iddo Ben-Ari: Department of Mathematics, University of Connecticut, Storrs, CT 06269-1009, USA}
\email{iddo.ben-ari@uconn.edu}
\urladdr{iddo.ben-ari.uconn.edu}

\author{Hugo Panzo}
\address{Hugo Panzo: Department of Mathematics, University of Connecticut, Storrs, CT 06269-1009, USA}
\email{hugo.panzo@uconn.edu}
\urladdr{www.math.uconn.edu/~panzo}

\subjclass[2010] {Primary 60F05 60F17; Secondary 60J35}
\keywords{polymers, functional central limit theorem, penalization, occupation time, resolvent}

\begin{abstract}
We study a model of continuous-time nearest-neighbor random walk on $\Z^d$ penalized by its occupation time at the origin, also known as a homopolymer. For a fixed  real parameter $\beta$ and time $t>0$, we consider the probability measure on paths of the random walk starting from the origin whose Radon-Nikodym derivative is proportional to the exponent of the product $\beta$ times the  occupation time at the origin up to time $t$. The case $\beta>0$ was studied previously by Cranston and Molchanov \cite{CM}\cite{CM2}.  We consider the case $\beta<0$, which is intrinsically different only when the underlying walk is recurrent, that is $d=1,2$.  Our main result is a scaling limit for the distribution of the homopolymer on the time interval $[0,t]$, as $t\to\infty$, a result that coincides   with the scaling limit for  penalized Brownian motion due to Roynette and Yor \cite{roy_yor}.  In two dimensions, the penalizing effect is asymptotically diminished,  and the homopolymer scales  to standard Brownian motion. Our approach is based on  potential analytic and  martingale approximation for the model. We also apply our main result to recover a scaling limit for a wetting model. We study the model through analysis of resolvents. 
\end{abstract} 

\maketitle
\section{Introduction}
\subsection{Description of the model} 
 Let $\Omega$ denote the space of cadlag processes on $\Z^d$. Elements in $\Omega$ are  functions $\omega: \R_+ \to \Z^d$ which are right continuous with   left limits.  We denote the canonical process associated to $\Omega$ by $X=\{X(t):t\in \R_+\}$, where $X(t):=X(t)(\omega)=\omega(t)$, and for $t\in \R_+$ we define ${\cal F}_t$ as  as the $\sigma$ algebra on $\Omega$  generated by the (pre-images of the) coordinate mappings $\omega \to X(s)$, $s\le t$, that is, ${\cal F}_t$ is the smallest $\sigma$-algebra making all coordinate mappings $X(s),~s\le t$, measurable. We also let ${\cal F}$ denote the $\sigma$-algebra generated by $\cup_{t\in \R_+} {\cal F}_t$. For $x\in \Z^d$, let $P_x$ denote the probability distribution on $\cal F$ corresponding to continuous-time nearest-neighbor symmetric random walk on $\Z^d$, with constant jump rate $1$ from each site, conditioned on $X(0)=x$. The  corresponding expectation operator will be denoted by  $E_x$.
 
  For each  {\it parameter} $\beta \in \R$, we define the corresponding {\it homopolymer} as a family of Gibbs measures on $\cal F$, 
 $\{P_{\beta,t}:t\in \R_+\}$ by letting 
   $$ \frac{d P_{\beta,t}}{dP_0}  =\frac{1}{Z_{\beta,t}}  \exp (\beta \I (t) ), $$
   where $\I(t)=\int_0^t \delta_0 (X(s)) ds$ and $Z_{\beta,t}=E_0 [\exp (\beta \I(t) )] $ is the normalizing constant, known as the {\it partition function}, viewed as function of the time parameter $t$. Below, we refer to $P_0$ as the {\it reference measure}. Note that unless $\beta=0$, the homopolymer forms an {\it inconsistent} family of probability measures. One way to view the homopolymer is as a model of random growth of large chains of atoms: starting with a single atom at the origin at time $0$,  atoms are added one at a time, the new atom placed in a site adjacent to the last one added. The random path $\{X(s):s \le t\}$  encodes the configuration of the chain at time $t$ as follows.  Let $J_1=0$ and let  $J_{j+1}=\inf \{t>J_j:X(t^-)\ne X(t)\}$ (with the convention $\inf \emptyset=\infty$). Then $J_j$ and $X(J_j)$ are, respectively,  the time and the site where the $j$-th atom was added, provided $J_j <\infty$. The probabilistic mechanism driving the growth of the homopolymer attempts to capture a simple form of self-interaction, rewarding or penalizing stays at $0$ according to whether $\beta>0$ or $\beta<0$.  A more standard and physically relevant interpretation of the model when $d=1$ \cite[Section 1.2]{giacomin} is as a defect line model, an  interface between media in $1+1$ dimensions, where the graph of the path separates between a medium below it a  medium above it. Note, however, that in \cite{giacomin} the homopolymer considered is in discrete time and the interface is obtained by linear interpolation of the path (in addition, the paths considered are pinned to $0$ at time $t$). A variant of the defect line interpretation  is the wetting model \cite{IsYosh}\cite[Section 1.3]{giacomin} in $1+1$ dimensions. This is obtained from the defect line model by restricting   the Gibbs measure  to paths which do not hit the negative half line.  We analyze the continuous-time version of the wetting model  using our results on the homopolymer. 
  \subsection{First observations}
   A key feature in the area  of polymers and more generally in statistical physics is 
 the phenomenon of a {\it phase transition} in behavior as a function of parameters. The homopolymer exhibits a phase transition, and because it is analytically tractable, allows for rather detailed description of the different phases.  As is customary in models defined through Gibbs measures, the simplest  characterization for phase transition is obtained by gross asymptotic behavior of the partition function as $t\to\infty$, usually according to whether the corresponding {\it Lyapunov exponent}, defined below for the homopolymer, is either zero or is strictly positive. This is because the asymptotic behavior of the partition function typically encapsulates  a fundamental statement on the asymptotic behavior of the Gibbs measures themselves.  We now briefly  illustrate this principle for the homopolymer, and begin by defining the Lyapunov exponent, $\lambda(\beta)$ as 
   $$ \lambda(\beta) = \lim_{t\to\infty} \frac 1t \ln Z_{\beta,t}.$$
Observe that     $\lambda(0)=0$ and for all other values of $\beta$ the limit exists due to a standard sub-additivity argument. Since $X$ is not positive recurrent under the reference measure $P_0$,  it follows that $\lambda (\beta) \ge 0$. To obtain an upper bound on $\lambda(\beta)$, recall the following well-known large deviations estimate: 
     \begin{equation} 
          \label{eq:LDI} 
          \lim_{t\to\infty} \frac{1}{t} \ln P_0(\I (t) \ge \epsilon t) <0\mbox{ for all } \epsilon>0. 
          \end{equation}
          Consequently,  
           $$Z_{\beta,t} \le e^{\beta t } P(\I(t)\ge \epsilon t) + e^{\beta \epsilon t},$$
            which shows that  $\lambda(\beta) < \beta $.  By definition,  $\lambda$ is nondecreasing. Due to Jensen's inequality, $\lambda$ is convex, and therefore continuous.   Letting $$\rho_\beta = 
    E_0 \left [ \exp (\beta \I(1)) \delta_0(X(1)) \right ],$$ it follows from monotone convergence that  $\rho _\beta>1$ for all sufficiently large $\beta$. Since $Z_{\beta,n} \ge \rho_\beta^n$, we conclude that $\lambda(\beta)>0$ for all $\beta$ large enough. We can therefore define a {\it critical value} of the parameter, $\beta_{cr}\in [0,\infty)$, by letting 
    $$\beta_{cr} = \sup  \{\beta: \lambda(\beta)=0\}.$$
    We summarize these findings in the following:
          \begin{prop}
          \label{lem:lambda_class}
 $\lambda$ is a nonnegative, nondecreasing  and convex function of $\beta$. Furthermore, there exists $\beta_{cr}=\beta_{cr}(d)\in [0,\infty)$ such that $\lambda(\beta)>0$ if and only if $\beta > \beta_{cr}$. 
   \end{prop}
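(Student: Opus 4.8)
The plan is to assemble the observations already collected above into the three assertions, the only point requiring genuine care being the existence of the limit defining $\lambda$. Nonnegativity: since the reference walk is not positive recurrent, $\I(t)/t\to 0$ $P_0$-a.s., so restricting the expectation to $\{\I(t)\le\epsilon t\}$ gives $Z_{\beta,t}\ge e^{-|\beta|\epsilon t}P_0(\I(t)\le\epsilon t)$, whence $\liminf_t \tfrac1t\ln Z_{\beta,t}\ge -|\beta|\epsilon$ for every $\epsilon>0$, and letting $\epsilon\downarrow 0$ yields $\lambda(\beta)\ge 0$. Monotonicity is immediate from $\beta\mapsto\exp(\beta\I(t))$ being pointwise nondecreasing. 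Convexity follows from Hölder's inequality applied to $\exp\big((\theta\beta_1+(1-\theta)\beta_2)\I(t)\big)$, which gives $Z_{\beta,t}\le Z_{\beta_1,t}^{\theta}Z_{\beta_2,t}^{1-\theta}$ and hence $\lambda(\theta\beta_1+(1-\theta)\beta_2)\le\theta\lambda(\beta_1)+(1-\theta)\lambda(\beta_2)$ after taking $\tfrac1t\ln(\cdot)$ and letting $t\to\infty$.

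For existence of the limit I would split according to the sign of $\beta$. When $\beta\le 0$ one has $Z_{\beta,t}\le 1$, so $\lambda(\beta)\le 0$, which together with $\lambda(\beta)\ge 0$ gives $\lambda(\beta)=0$; in particular the limit exists, $(-\infty,0]\subset\{\beta:\lambda(\beta)=0\}$, and $\beta_{cr}\ge 0$. When $\beta>0$ I would use the Markov property at time $s$ to write $Z_{\beta,s+t}=E_0\big[\exp(\beta\I(s))\,E_{X(s)}[\exp(\beta\I(t))]\big]$, and observe that by the strong Markov property at the first hitting time of $0$ the occupation time $\I(t)$ under $P_x$ is stochastically dominated by $\I(t)$ under $P_0$ for every $x$; since $\beta>0$ this yields $E_{X(s)}[\exp(\beta\I(t))]\le Z_{\beta,t}$ and hence the submultiplicativity $Z_{\beta,s+t}\le Z_{\beta,s}Z_{\beta,t}$. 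A (continuous-time) Fekete argument then gives $\lambda(\beta)=\inf_{t>0}\tfrac1t\ln Z_{\beta,t}\in[0,\infty)$, finiteness being clear from $\ln Z_{\beta,1}<\infty$ and the upper bound $\lambda(\beta)<\beta$ already derived from \eqref{eq:LDI}. So $\lambda$ is finite-valued on all of $\R$, hence continuous by convexity.

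It remains to produce a finite $\beta_{cr}$ and to upgrade ``$\lambda(\beta)=0$'' into the stated iff. For finiteness: with $\rho_\beta=E_0[\exp(\beta\I(1))\delta_0(X(1))]$, restricting the expectation defining $Z_{\beta,n}$ to the event $\{X(1)=\cdots=X(n)=0\}$ and peeling off one unit of time at a time via the Markov property gives $Z_{\beta,n}\ge\rho_\beta^{\,n}$, so $\lambda(\beta)\ge\ln\rho_\beta$; since $\rho_\beta\uparrow\infty$ by monotone convergence, $\lambda(\beta)>0$ for all large $\beta$, whence $\beta_{cr}<\infty$. Finally, by definition of $\beta_{cr}$ there are $\beta_k\uparrow\beta_{cr}$ with $\lambda(\beta_k)=0$, so $\lambda(\beta_{cr})=0$ by continuity; and for any $\beta<\beta_{cr}$, picking $\beta_k\in(\beta,\beta_{cr})$ with $\lambda(\beta_k)=0$, monotonicity gives $0\le\lambda(\beta)\le\lambda(\beta_k)=0$. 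Hence $\{\lambda=0\}=(-\infty,\beta_{cr}]$, and by monotonicity $\{\lambda>0\}=(\beta_{cr},\infty)$, which is the claim.

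The only genuinely delicate step is the stochastic-domination/submultiplicativity argument underlying existence of the Lyapunov exponent for $\beta>0$; the rest is bookkeeping with the estimates already in hand, plus the standard fact that a finite convex function on $\R$ is continuous.
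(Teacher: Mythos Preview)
Your proof is correct and follows essentially the same route as the paper's inline argument preceding the proposition: nonnegativity from non--positive-recurrence, monotonicity trivially, convexity via H\"older (the paper says ``Jensen,'' but it is the same inequality), and $\beta_{cr}<\infty$ from $Z_{\beta,n}\ge\rho_\beta^{\,n}$. Your explicit stochastic-domination argument yielding the submultiplicativity $Z_{\beta,s+t}\le Z_{\beta,s}Z_{\beta,t}$ for $\beta>0$ is exactly what the paper hides under ``a standard sub-additivity argument''; your case split treating $\beta\le 0$ separately (sandwiching between $\liminf\ge 0$ and $Z_{\beta,t}\le 1$) is a clean way to dispose of existence there without invoking sub-additivity at all.
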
 
  We name  the parameter regimes $\beta<\beta_{cr}$, $\beta=\beta_{cr}$ and $\beta>\beta_{cr}$ 
        the {\it subcritical}, {\it critical} and {\it supercritical} phases, respectively. 
        
Using merely  Proposition  \ref{lem:lambda_class} and the large deviations statement on the reference measure \eqref{eq:LDI}, we are able to immediately prove the following:
\begin{prop}~
 \label{pr:homo_class}
 \begin{enumerate} 
 \item Suppose  $\beta< \beta_{cr}$. Then for any $\epsilon>0$, 
      $$\limsup_{t\to\infty} \frac{1}{t} \ln P_{\beta,t} (\I(t) \ge \epsilon t) <0;$$
 \item Suppose $\beta > \beta_{cr}$. Then for $\epsilon< \lambda(\beta)/\beta$      $$ \liminf_{t\to\infty} P_{\beta,t} (\I(t) \ge \epsilon t) >0.$$
\end{enumerate} 
\end{prop}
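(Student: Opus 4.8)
The plan is to read both statements directly off the defining Radon--Nikodym identity
$$P_{\beta,t}(A) = \frac{E_0\left[\exp(\beta\I(t))\,\ch_A\right]}{Z_{\beta,t}},$$
feeding in only Proposition \ref{lem:lambda_class}; I take $A=\{\I(t)\ge\epsilon t\}$ for part (i) and work with the complementary event $\{\I(t)<\epsilon t\}$ for part (ii). One preliminary remark: since $\I$ is nondecreasing in $t$, the map $t\mapsto Z_{\beta,t}$ is nondecreasing when $\beta\ge0$ and nonincreasing when $\beta\le0$, so sandwiching between consecutive integers upgrades the subadditive limit behind Proposition \ref{lem:lambda_class} to $\frac1t\ln Z_{\beta,t}\to\lambda(\beta)$ along all real $t$, which is all that is used below. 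Recall also that, by Proposition \ref{lem:lambda_class}, $\lambda\equiv0$ on $(-\infty,\beta_{cr}]$.

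For part (i), fix $\beta<\beta_{cr}$ and choose any $\beta'$ with $\beta<\beta'<\beta_{cr}$, so $\lambda(\beta')=\lambda(\beta)=0$. On the event $\{\I(t)\ge\epsilon t\}$, writing $\beta\I(t)=(\beta-\beta')\I(t)+\beta'\I(t)$ and using $\beta-\beta'<0$ together with $\I(t)\ge\epsilon t$ gives $\exp(\beta\I(t))\le e^{(\beta-\beta')\epsilon t}\exp(\beta'\I(t))$, hence
$$E_0\left[\exp(\beta\I(t))\,\ch_{\{\I(t)\ge\epsilon t\}}\right]\le e^{(\beta-\beta')\epsilon t}\,Z_{\beta',t}.$$
Dividing by $Z_{\beta,t}$, taking $\tfrac1t\ln(\cdot)$, and letting $t\to\infty$ yields
$$\limsup_{t\to\infty}\frac1t\ln P_{\beta,t}(\I(t)\ge\epsilon t)\le(\beta-\beta')\epsilon+\lambda(\beta')-\lambda(\beta)=-(\beta'-\beta)\epsilon<0,$$
which is exactly (i).

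For part (ii), fix $\beta>\beta_{cr}$; then $\beta>0$ and $\lambda(\beta)>0$, and let $\epsilon<\lambda(\beta)/\beta$. Since $\beta>0$, on $\{\I(t)<\epsilon t\}$ we have $\exp(\beta\I(t))<e^{\beta\epsilon t}$, so $E_0[\exp(\beta\I(t))\ch_{\{\I(t)<\epsilon t\}}]\le e^{\beta\epsilon t}$ and therefore
$$P_{\beta,t}(\I(t)<\epsilon t)\le\frac{e^{\beta\epsilon t}}{Z_{\beta,t}}=\exp\bigl((\beta\epsilon-\lambda(\beta)+o(1))t\bigr)\longrightarrow0$$
because $\beta\epsilon<\lambda(\beta)$. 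Consequently $P_{\beta,t}(\I(t)\ge\epsilon t)\to1$, which is considerably stronger than the asserted $\liminf>0$.

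I do not expect a genuine obstacle here: once Proposition \ref{lem:lambda_class} is in hand, each half is a one-line estimate on the Gibbs weight restricted to the relevant event. The only real idea is the parameter interpolation in (i) --- bounding the weight $e^{\beta\I(t)}$ on the large-occupation event by the partition function $Z_{\beta',t}$ at a slightly larger but still subcritical $\beta'$ --- which is precisely what lets the gain $-(\beta'-\beta)\epsilon$ dominate the $o(t)$ fluctuations of $\ln Z_{\beta,t}$ and $\ln Z_{\beta',t}$. (For $\beta\le0$ one could instead invoke the reference-measure large-deviation bound \eqref{eq:LDI} directly, since there $e^{\beta\I(t)}\le e^{\beta\epsilon t}$ on $\{\I(t)\ge\epsilon t\}$ and $\tfrac1t\ln P_0(\I(t)\ge\epsilon t)$ is eventually bounded away from $0$; but the interpolation argument covers all $\beta<\beta_{cr}$ uniformly.)
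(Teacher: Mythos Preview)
Your proof is correct. Part~(ii) is essentially identical to the paper's argument: both bound $e^{\beta\I(t)}\le e^{\beta\epsilon t}$ on $\{\I(t)<\epsilon t\}$ and divide by $Z_{\beta,t}$.

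Part~(i), however, is handled differently. The paper applies H\"older's inequality with exponent $1+\alpha$ (chosen so that $(1+\alpha)\beta<\beta_{cr}$) to obtain
\[
P_{\beta,t}(\I(t)\ge\epsilon t)\le P_0(\I(t)\ge\epsilon t)^{\alpha/(1+\alpha)}\,\frac{(Z_{(1+\alpha)\beta,t})^{1/(1+\alpha)}}{Z_{\beta,t}},
\]
and then invokes the reference-measure large-deviation estimate \eqref{eq:LDI} to force the right-hand side to decay exponentially. Your interpolation bound $e^{\beta\I(t)}\le e^{(\beta-\beta')\epsilon t}e^{\beta'\I(t)}$ on the event $\{\I(t)\ge\epsilon t\}$ achieves the same conclusion more directly and, notably, without appealing to \eqref{eq:LDI} at all --- you need only that $\lambda(\beta)=\lambda(\beta')=0$, which is already contained in Proposition~\ref{lem:lambda_class}. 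In effect, your argument is a limiting case of the H\"older route (replacing the $L^{1+\alpha}$ bound on $e^{\beta\I(t)}$ by a pointwise bound on the event), and it buys a cleaner, more self-contained proof. The paper's route, on the other hand, makes explicit that the exponential decay rate inherits a contribution from the reference-measure large deviations, which is perhaps more informative but not needed for the statement as written.
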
 
Thus when $\beta<\beta_{cr}$ the qualitative nature of the large deviations statement for the reference measure \eqref{eq:LDI} is preserved under the homopolymer, while when $\beta>\beta_{cr}$ the homopolymer exhibits a completely opposite, positive-recurrent-like behavior. 
 \subsection{Organization} 
 The essential results on the model are presented in Section \ref{sec:results}, starting with a review of prior work in Section \ref{sec:CM}, and followed by our results on convergence of the homopolymer measure in Section \ref{sec:poly_conv}, and on convergence of the scaled homopolymer measure in Section \ref{sec:scaling}. An application of our results to a wetting model is given in Section \ref{sec:wetting}. The proofs of our results and many auxiliary results are given in Section \ref{sec:proofs}, with the exception of two lemmas  the authors consider as folklore, and whose proofs are given in the appendix for completeness. 
\section{Results} 
\label{sec:results}
\subsection{Previous work by Cranston and Molchanov}
\label{sec:CM}
 In  \cite{CM}, Cranston and Molchanov considered the problem of the limit behavior of $P_{\beta,t}$ as $t\to\infty$ when $\beta >0$. Their results fall into two different notions of limit. First is  the limit of $P_{\beta,t} |_{{\cal F}_T}$ as $t\to\infty$ while $T$ remains  fixed. The second is the limit of $X(t)$  (sometimes $X(t)/\sqrt{t}$, depending on $\beta$ and $d$)  under $P_{\beta,t}$ as $t\to\infty$. Here is a summary of the results in \cite{CM},  labeled here  as {\bf CM-x}, with x=I,II or III. 
 
 We need some definitions. First, let $\Delta$ denote the normalized discrete Laplacian on $\Z^d$. That is, for $u:\Z^d \to \R$ we have 
 \begin{equation} 
 \label{eq:laplacian} 
 \Delta  u (x) = \frac{1}{2d} \sum_{|y-x|=1} \left ( u(y)-u(x)\right),
 \end{equation}
  where here and henceforth $|\cdot|$ denotes the $\ell^2$-distance in $\C^d$.
  Recall that $\Delta$ is the generator of the simple symmetric nearest-neighbor random walk on $\Z^d$ with jump rate $1$ from each site.    Next, 
  define the perturbed operator  $H_\beta = \Delta + \beta \delta_0(x)$.  
  Let $\Sigma_\beta$ denote the spectrum of $H_\beta$ as an operator on $\ell^2(\Z^d)$. Since $H_\beta$ is bounded and self-adjoint, $\Sigma_\beta$ is a compact subset of $\R$. 
  \begin{CMth} 
We have
 $\lambda(\beta)= \max \Sigma_\beta$. In addition: 
  \begin{enumerate} 
 \item $\beta>\beta_{cr}$ if and only if $\lambda(\beta)$ is an isolated element in $\Sigma_\beta$. In this case $\lambda(\beta)$ is the only strictly positive element in $\Sigma_\beta$; and 
  \item $\beta_{cr}= P_0( X\mbox{ does not return to }0)= \sup \{\beta: \lim_{t\to\infty} Z_{\beta,t}<\infty\}$.
 \end{enumerate}  
\end{CMth}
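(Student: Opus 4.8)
I would route every assertion through one scalar object, the diagonal Green's function of the reference walk, $G_\lambda(0,0):=\int_0^\infty e^{-\lambda t}p_t(0,0)\,dt=\bigl((\lambda-\Delta)^{-1}\delta_0\bigr)(0)$ with $p_t(0,0):=P_0(X(t)=0)$, well defined and analytic for $\lambda\notin\Sigma(\Delta)=[-2,0]$. Two computations involve it: a Dyson (renewal) expansion of the partition function, which controls $\lambda(\beta)$, and a rank-one resolvent formula for $H_\beta$, which controls $\Sigma_\beta$; the point is that both produce the same function $\lambda\mapsto 1-\beta\,G_\lambda(0,0)$. For the first, expand $\exp(\beta\I(t))$ in powers of $\beta$, use the Markov property to evaluate $E_0\bigl[\prod_{i=1}^n\delta_0(X(s_i))\bigr]$ on $\{0<s_1<\dots<s_n<t\}$ as $p_{s_1}(0,0)p_{s_2-s_1}(0,0)\cdots$, then Laplace transform (turning convolution into product) and sum a geometric series to get, for $\re\lambda$ large and then by analytic continuation,
\[
\int_0^\infty e^{-\lambda t}Z_{\beta,t}\,dt=\frac{1}{\lambda\bigl(1-\beta\,G_\lambda(0,0)\bigr)}.
\]
Since $Z_{\beta,t}\ge 0$ and $\tfrac1t\ln Z_{\beta,t}\to\lambda(\beta)$, the abscissa of convergence of the left side is exactly $\lambda(\beta)$, so by the Landau--Pringsheim theorem for Laplace transforms of nonnegative functions $\lambda(\beta)$ equals the rightmost real singularity of the right side. (Conceptually this is the Feynman--Kac identity $Z_{\beta,t}=(e^{tH_\beta}\mathbf 1)(0)$ made $\ell^2$-legitimate.)

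\textbf{The scalar function.} On $\lambda>0$ the map $\lambda\mapsto G_\lambda(0,0)$ is strictly positive, strictly decreasing and continuous, with $G_\lambda(0,0)\to 0$ as $\lambda\to\infty$ and $G_\lambda(0,0)\uparrow G:=\int_0^\infty p_t(0,0)\,dt\in(0,\infty]$ as $\lambda\downarrow 0$ by monotone convergence. A short computation identifies the total occupation time $\I(\infty):=\int_0^\infty\delta_0(X(s))\,ds$ as a geometric sum of i.i.d.\ unit-rate exponential holding times, hence $\mathrm{Exp}(q)$-distributed with $q:=P_0(X\text{ does not return to }0)$, so $G=E_0[\I(\infty)]=1/q$ (with $G=\infty$ exactly for the recurrent walks $d=1,2$). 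Hence $1-\beta\,G_\lambda(0,0)$ has a zero in $\lambda>0$ --- necessarily unique and simple, say $\lambda^\ast$, and $\lambda^\ast<\beta$ because $G_\lambda(0,0)<1/\lambda$ --- precisely when $\beta>0$ and $\beta G>1$, i.e.\ iff $\beta>q$; otherwise the denominator stays bounded away from $0$ on $\lambda>0$. The right side of the display above is therefore meromorphic off $[-2,0]$ with rightmost real singularity at $\lambda^\ast$ when $\beta>q$ and at the simple pole $\lambda=0$ when $\beta\le q$. This evaluates $\lambda(\beta)$, yields $\lambda(\beta)>0\iff\beta>q$, and hence $\beta_{cr}=q=P_0(X\text{ does not return to }0)$.

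\textbf{Spectral side and conclusion.} Since $\beta\delta_0$ is a rank-one, hence compact, perturbation, Weyl's theorem gives $\Sigma_{\mathrm{ess}}(H_\beta)=\Sigma(\Delta)=[-2,0]$, so $\Sigma_\beta\setminus[-2,0]$ consists of isolated eigenvalues of finite multiplicity. For $\lambda\notin[-2,0]$ the Krein rank-one resolvent formula writes $(\lambda-H_\beta)^{-1}$ in terms of $(\lambda-\Delta)^{-1}$ with scalar denominator $1-\beta\,G_\lambda(0,0)$, so $\lambda\in\Sigma_\beta\iff 1-\beta\,G_\lambda(0,0)=0$ --- the very same equation. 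Thus $\Sigma_\beta\cap(0,\infty)=\{\lambda^\ast\}$ when $\beta>q$ and is empty otherwise, in which case $\max\Sigma_\beta=0$ because $0\in\Sigma_{\mathrm{ess}}$; comparing with the previous paragraph gives $\lambda(\beta)=\max\Sigma_\beta$ for every $\beta$, and when $\beta>\beta_{cr}$ this maximum is the unique, isolated, simple strictly positive point $\lambda^\ast$ of $\Sigma_\beta$, which is part~(i). For part~(ii) it remains to observe that, for $\beta>0$, monotone convergence gives $\lim_{t\to\infty}Z_{\beta,t}=E_0[e^{\beta\I(\infty)}]$, equal to $q/(q-\beta)<\infty$ if $\beta<q$ and $+\infty$ if $\beta\ge q$, while $Z_{\beta,t}\le 1$ for $\beta\le 0$; hence $\sup\{\beta:\lim_t Z_{\beta,t}<\infty\}=q=\beta_{cr}$.

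\textbf{Main obstacle.} Nothing here is deep once the two representations are in hand, and that set-up is the real work: justifying that the abscissa of convergence of the Laplace transform is simultaneously equal to $\lambda(\beta)$ and a genuine singularity (Landau--Pringsheim), and correctly locating $\Sigma_{\mathrm{ess}}(H_\beta)$ together with the rank-one resolvent formula, so that one and the same elementary monotone function $1-\beta\,G_\lambda(0,0)$ governs both the exponential growth of $Z_{\beta,t}$ and the spectrum of $H_\beta$. After that, everything reduces to bookkeeping about where $G_\lambda(0,0)$ crosses $1/\beta$.
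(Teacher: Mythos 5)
The paper does not prove this statement: Theorem CM-I is quoted from Cranston--Molchanov \cite{CM} as background, so there is no in-paper argument to check against. That said, your reconstruction is correct, and it is built from the same two ingredients the paper develops in Section \ref{sec:proofs} for its own (later) results: the rank-one resolvent identity \eqref{eq:resolventeqn} (your ``Krein formula'') and the Laplace-transform identity $\int_0^\infty e^{-\lambda t}Z_{\beta,t}\,dt=\frac{1}{\lambda(1-\beta I(\lambda))}$, which the paper obtains more quickly via $\frac{d}{dt}Z_{\beta,t}=\beta p_\beta(t,0,0)$ and \eqref{eq:Ilambda}, rather than by your Dyson/renewal expansion --- same function, slicker derivation. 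The paper also gives, as a side remark in the proof of Theorem \ref{th:psib_desc}, an alternative and independent identification $1-\beta_{cr}=P_{e_1}(\tau_0<\infty)$ obtained by letting $\beta\searrow\beta_{cr}$ in the harmonic relation $(H_\beta E_\cdot e^{-\lambda(\beta)\tau_0})(0)=0$; your route through $\I(\infty)\sim\mathrm{Exp}(q)$ is more probabilistic and has the advantage of delivering the $\lim_t Z_{\beta,t}$ characterization at the same time.

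One step you should make explicit rather than leave implicit: after applying Landau--Pringsheim you conclude that $\lambda(\beta)$ is the rightmost real singularity of $1/\bigl(\lambda(1-\beta I(\lambda))\bigr)$, and you then match this against the unique real zero $\lambda^*$ of $1-\beta I$. For this to close, you need that there is no \emph{complex} zero of $1-\beta I(\lambda)$ with $\re\lambda>\lambda^*$, else the analytic continuation of the (nonnegative) Laplace transform would already have blown up to the right of the real axis and the abscissa of convergence would exceed $\lambda^*$. This is true, but it is a consequence of self-adjointness: $\im I(\lambda)=-\im(\lambda)\,\|(\bar\lambda-\Delta)^{-1}\delta_0\|^2\neq 0$ off the real axis, so $1-\beta I$ has no non-real zeros; equivalently, non-real zeros would be non-real poles of the resolvent of the self-adjoint $H_\beta$. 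You invoke Weyl plus Krein, which is enough information to see this, but the argument as written jumps over it. A second small point: for $\beta\le 0$, $Z_{\beta,t}$ is nonincreasing in $t$ (so the limit exists) rather than merely bounded by $1$; and for $\beta>0$ you should say explicitly that monotone convergence gives $\lim_t Z_{\beta,t}=E_0[e^{\beta\I(\infty)}]$. Neither is a gap, just bookkeeping worth recording since the boundary case $\beta=q$ (where $E_0[e^{q\I(\infty)}]=\infty$) is what pins down the supremum in part~(ii).
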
 
When $\beta> \beta_{cr}$, since $\lambda(\beta)$ is larger than any other element in the spectrum, the polymer can be easily analyzed through the spectral theorem, and up to leading order, the behavior is determined by the eigenfunction. When $\beta \le \beta_{cr}$, $\lambda(\beta)$ is not isolated (it is an eigenvalue only when $d\ge5$), and the analysis is  more delicate, except for the case $\beta< \beta_{cr}$ and $d\ge 3$, in which the  partition function converges to a limit in $(0,\infty)$ as $t\to\infty$. 
This suggests that the analysis of the polymer for $d\ge 3$ and $\beta < \beta_{cr}$ is simpler compared to the remaining phase  $\beta= \beta_{cr}$ for $d\ge 3$ and $\beta <0=\beta_{cr}$ for $d =1,2$. 

For the next result, we recall the notion of an  {\it $h$-transform}, also known as {\it Doob transform}. Let $H$ be a linear operator on a subspace  ${\mathbf V}$  of real-valued functions on $\Z^d$ and let $h:\Z^d \to (0,\infty)$. Let ${\mathbf V}^h$ denote the vector space of all functions $u:\Z^d \to \R$ such that $uh,\frac 1 h H(uh) \in {\mathbf V}$. Then we define the linear operator $H^h$,  the $h$-transform of  $H$, through 
 $$ H^h u = \frac{1}{h} H (uh),~u \in {\mathbf V}^h.$$
  \begin{CMth}
\label{th:CM_SR} 
  Let $\beta>0$. 
  \begin{enumerate}
  \item There exists a strictly positive function function $\psi_\beta$ on $\Z^d$  such that $\lim_{t\to\infty} \frac{Z_{\beta,t}(x)}{Z_{\beta,t}} = \psi_\beta (x)$.
    \item $P_{\beta,t}|_{{\cal F}_T}$  converges weakly as $t\to\infty$ to the distribution of the Markov process whose generator is  $(H_\beta-\lambda(\beta))^{\psi_\beta}$, conditioned on $X(0)=0$.  
  \end{enumerate} 
   \end{CMth}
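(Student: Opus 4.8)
The plan is to establish part~(1) first, by potential-theoretic (resolvent) analysis, and then to read part~(2) off from part~(1) via a first-step decomposition that exhibits the limiting law as a Doob $h$-transform. Write $Z_{\beta,t}(x):=E_x[e^{\beta\I(t)}]$, so $Z_{\beta,t}=Z_{\beta,t}(0)$, and record two elementary facts used throughout: since $\beta>0$ and $t\mapsto\I(t)$ is nondecreasing, $t\mapsto Z_{\beta,t}(x)$ is nondecreasing; and by the strong Markov property at the first visit of $X$ to $0$ one gets $Z_{\beta,t}(x)\le 1+Z_{\beta,t}$, hence (as $Z_{\beta,t}\ge 1$) the uniform bound $Z_{\beta,t}(x)/Z_{\beta,t}\le 2$ for all $x$ and $t$.

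For the first assertion, fix $\beta>\beta_{cr}$ (the only case when $d=1,2$), so by CM-I $\lambda(\beta)=\max\Sigma_\beta$ is an isolated point of $\Sigma_\beta$; since the semigroup $e^{tH_\beta}$ has strictly positive kernel $E_x[e^{\beta\I(t)};X(t)=y]$, it is positivity improving, so $\lambda(\beta)$ is a simple eigenvalue with a strictly positive $\ell^2$-eigenfunction $\psi_\beta$, which we normalize by $\psi_\beta(0)=1$. For $\alpha>\lambda(\beta)$ consider the Green's function $G_\alpha(x):=\int_0^\infty e^{-\alpha t}Z_{\beta,t}(x)\,dt=E_x[\int_0^\infty e^{-\alpha t+\beta\I(t)}\,dt]$. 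A first-step analysis at the successive visits of $X$ to $0$ --- equivalently, the rank-one (Krein) resolvent formula for the point perturbation $H_\beta=\Delta+\beta\delta_0$ --- yields
$$G_\alpha(x)=\frac1\alpha+\frac{\beta\, E_x[e^{-\alpha\tau_0}]\,r_\alpha}{\alpha\,(1-\beta r_\alpha)},\qquad r_\alpha:=\int_0^\infty e^{-\alpha s}P_0(X(s)=0)\,ds,$$
where $\tau_0$ is the hitting time of $0$. Since $\alpha\mapsto r_\alpha$ is strictly decreasing and $r_\alpha=1/\beta$ exactly at $\alpha=\lambda(\beta)$, the function $\alpha\mapsto G_\alpha(x)$ continues analytically past $\lambda(\beta)$ with a simple pole there, whose residue is $c\,E_x[e^{-\lambda(\beta)\tau_0}]$ with $c>0$ independent of $x$; and $x\mapsto E_x[e^{-\lambda(\beta)\tau_0}]$ is the bounded, decaying --- hence $\ell^2$ --- solution of $\Delta u=\lambda(\beta)u$ off the origin, so it coincides with $\psi_\beta$. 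An Ikehara--Wiener Tauberian theorem applied to the nondecreasing function $t\mapsto\int_0^t Z_{\beta,s}(x)\,ds$, together with the monotonicity and real-analyticity of $t\mapsto Z_{\beta,t}(x)$, then gives $e^{-\lambda(\beta)t}Z_{\beta,t}(x)\to c\,\psi_\beta(x)$; dividing by the same asymptotics at $x=0$ proves part~(1) with limit $\psi_\beta$. (For $d\ge 3$ and $\beta<\beta_{cr}$ one has $\lambda(\beta)=0$ and $Z_{\beta,t}(x)\uparrow\ell(x)\in(0,\infty)$, so the ratio converges to $\ell(x)/\ell(0)$; the borderline $\beta=\beta_{cr}$ needs instead the fine, dimension-dependent behaviour of the lattice resolvent at the spectral edge, but gives the same conclusion.) In every case $H_\beta\psi_\beta=\lambda(\beta)\psi_\beta$, which follows by normalizing the identity $\partial_t Z_{\beta,t}=H_\beta Z_{\beta,t}$ and letting $t\to\infty$, using that $H_\beta$ is a bounded, finite-range operator.

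For the second assertion, fix $T\ge 0$ and $A\in{\cal F}_T$. The Markov property at time $T$ gives, for $t>T$,
$$P_{\beta,t}(A)=\frac{E_0[\mathbf 1_A\,e^{\beta\I(t)}]}{Z_{\beta,t}}=E_0\!\left[\mathbf 1_A\,e^{\beta\I(T)}\,\frac{Z_{\beta,t-T}(X(T))}{Z_{\beta,t-T}}\cdot\frac{Z_{\beta,t-T}}{Z_{\beta,t}}\right].$$
By part~(1), $Z_{\beta,t-T}(X(T))/Z_{\beta,t-T}\to\psi_\beta(X(T))$ pointwise; and from $Z_{\beta,s+T}=E_0[e^{\beta\I(T)}Z_{\beta,s}(X(T))]$, dominated convergence (the integrand is $\le 2e^{\beta T}$, which is integrable under $P_0$) together with part~(1) gives $Z_{\beta,s+T}/Z_{\beta,s}\to E_0[e^{\beta\I(T)}\psi_\beta(X(T))]=e^{\lambda(\beta)T}$, the value because $e^{TH_\beta}\psi_\beta=e^{\lambda(\beta)T}\psi_\beta$ and $\psi_\beta(0)=1$; hence $Z_{\beta,t-T}/Z_{\beta,t}\to e^{-\lambda(\beta)T}$. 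The same domination (using monotonicity to bound $Z_{\beta,t-T}/Z_{\beta,t}\le 1$) lets us pass the limit inside, so
$$P_{\beta,t}(A)\longrightarrow E_0[\mathbf 1_A\,\psi_\beta(X(T))\,e^{\beta\I(T)-\lambda(\beta)T}]=E_0[\mathbf 1_A\,M_T],\qquad M_T:=\psi_\beta(X(T))\,e^{\beta\I(T)-\lambda(\beta)T}.$$
Since $(H_\beta-\lambda(\beta))\psi_\beta=0$, the process $M$ is a mean-one $P_0$-martingale (standard Feynman--Kac) and a multiplicative functional with $M_0=\psi_\beta(0)=1$; tilting $P_0$ by $M$ on each ${\cal F}_T$ therefore produces the law of a Markov process started at $0$ whose generator is $\tfrac{1}{\psi_\beta}(H_\beta-\lambda(\beta))(\psi_\beta\,\cdot)=(H_\beta-\lambda(\beta))^{\psi_\beta}$ --- exactly the claimed limit. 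Finally, because $\Z^d$ is discrete and the densities $dP_{\beta,t}/dP_0|_{{\cal F}_T}=e^{\beta\I(T)}Z_{\beta,t-T}(X(T))/Z_{\beta,t}$ are bounded uniformly in $t$, the convergence $P_{\beta,t}(A)\to E_0[\mathbf 1_AM_T]$ for cylinder $A$ upgrades to weak convergence of $P_{\beta,t}|_{{\cal F}_T}$ on path space.

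The main obstacle is part~(1): extracting the large-time asymptotics of $Z_{\beta,t}(x)$ with an $x$-independent prefactor despite $\mathbf 1\notin\ell^2(\Z^d)$ --- this is precisely why the argument runs through the pointwise Green's function and a Tauberian theorem rather than a bare spectral decomposition --- and, within part~(1), the critical value $\beta=\beta_{cr}$, where $\lambda(\beta)$ is no longer isolated and the pole structure of $G_\alpha$ must be recovered from delicate, dimension-sensitive estimates on the lattice resolvent near the edge of the spectrum. A secondary technical point is the interchange of limits and expectations in part~(2), which rests entirely on the uniform bound $Z_{\beta,t}(x)/Z_{\beta,t}\le 2$ noted at the outset.
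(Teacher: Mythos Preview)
This statement (Theorem~CM-II) is not proved in the present paper: it is a result of Cranston and Molchanov quoted from \cite{CM} as prior work in Section~\ref{sec:CM}. There is therefore no proof in the paper to compare against directly.

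That said, your argument is essentially correct. For part~(1) with $\beta>\beta_{cr}$, the resolvent formula you write agrees with the paper's \eqref{eq:resolventeqn} summed over $y$ (using $R_\alpha(x,0)=E_x[e^{-\alpha\tau_0}]I(\alpha)$), the simple pole at $\alpha=\lambda(\beta)$ follows from the spectral gap in CM-I(i), and the residue identification $\psi_\beta(x)=E_x[e^{-\lambda(\beta)\tau_0}]$ matches the paper's Theorem~\ref{th:psib_desc}(i). The Wiener--Ikehara step is legitimate here precisely because the gap gives analytic continuation across $\{\Re\alpha=\lambda(\beta)\}$. You correctly flag $\beta=\beta_{cr}$ as requiring a separate, dimension-dependent argument. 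For part~(2), the Markov-property decomposition plus dominated convergence (via your bound $Z_{\beta,t}(x)/Z_{\beta,t}\le 2$) is clean, and note that you have in fact shown $P_{\beta,t}(A)\to E_0[\mathbf 1_A M_T]$ for \emph{every} $A\in{\cal F}_T$, i.e.\ setwise convergence, so the closing paragraph about upgrading from cylinder sets is unnecessary.

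For context, the paper does prove the $\beta<0$, $d=1,2$ analog (Theorems~\ref{th:Zbeta} and~\ref{th:small times}). Part~(ii) there follows exactly your template for part~(2). For part~(1), however, the paper does not use a Tauberian theorem: instead it derives the convolution identity $Z_{\beta,t}(x)=Z_{\beta,t}-\beta(\tilde\psi_{\cdot}(x)*Z_{\beta,\cdot})(t)$ from the resolvent equation and combines it with sharp asymptotics of $Z_{\beta,t}$ obtained by spectral inversion (Proposition~\ref{pr:pbeta00}). That route avoids Tauberian machinery at the cost of needing precise one-term asymptotics for $p_\beta(t,0,0)$; it is the natural choice in the subcritical regime, where $Z_{\beta,t}\to 0$, no spectral gap is available, and the monotonicity you exploit (which requires $\beta>0$) fails.
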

  The discrete-time version of part ii in one dimension appears in \cite{GT}. 
  \begin{CMth}
  \label{th:CM_LR}
   Let $\beta>0$. Then the following hold: 
    \begin{enumerate} 
      \item Suppose  $d\ge 3$ and $\beta <\beta_{cr}$ then $\lim_{t\to\infty} P_{\beta,t} (X(t)/\sqrt{t} \in \cdot )  = N(0,\mbox{I}_d)$;  
      \item Suppose $d\ge 3$ and $\beta=\beta_{cr}$. 
       \begin{enumerate} 
        \item If $d=3,4$ then 
       $$\int_{{\mathbb R}^d}e^{i \phi \cdot x } P_{\beta,t} (\frac{X(t)}{\sqrt{t}}\in dx ) \Rightarrow \begin{cases}  \int_0^1 \frac{ e^{-\frac{ |\phi|^2}{6} (1-u) }du}{2\sqrt{u}}du & d=3 \\ \int_0^1 e^{\frac{-|\phi|^2}{8} u}du & d=4; \end{cases}$$
         \item If $d\ge 5$ then 
         $\lim_{t\to\infty} P_{\beta,t}(X(t) =x ) = \frac{\psi_\beta (x)}{\sum_y \psi_\beta (y)}$. 
         \end{enumerate} 
    \item If $\beta > \beta_{cr} $ then $\lim_{t\to\infty} P_{\beta,t}(X(t) =x ) = \frac{\psi_\beta (x)}{\sum_y \psi_\beta (y)}$. 
    \end{enumerate} 
    \end{CMth}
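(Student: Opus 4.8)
The common engine is the Feynman--Kac identity $E_x[\exp(\beta\I(t))f(X(t))]=(e^{tH_\beta}f)(x)$ for bounded $f:\Z^d\to\C$, where $e^{tH_\beta}$ is the positivity preserving semigroup generated on $\ell^2(\Z^d)$ by $H_\beta=\Delta+\beta\delta_0$. It gives $Z_{\beta,t}=(e^{tH_\beta}\mathbf 1)(0)$, $P_{\beta,t}(X(t)=x)=(e^{tH_\beta}\delta_x)(0)/Z_{\beta,t}$, and
\[
\int_{\R^d}e^{i\phi\cdot x}\,P_{\beta,t}\!\left(\tfrac{X(t)}{\sqrt t}\in dx\right)=\frac{(e^{tH_\beta}g_{\phi,t})(0)}{Z_{\beta,t}},\qquad g_{\phi,t}(y):=e^{i\phi\cdot y/\sqrt t}.
\]
So everything reduces to the large-$t$ behaviour, at the origin, of $e^{tH_\beta}$ applied to $\delta_x$, to $\mathbf 1$, and to the slowly oscillating $g_{\phi,t}$; since $H_\beta$ is bounded self-adjoint this is dictated by the spectrum near its top $\lambda(\beta)=\max\Sigma_\beta$, whose character --- by Theorem CM-I --- changes exactly at $\beta_{cr}$, accounting for the three different statements.

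\emph{Eigenvalue regime (parts (iii) and (ii)(b)).} When $\lambda(\beta)$ is an eigenvalue of $H_\beta$ --- isolated for $\beta>\beta_{cr}$, at the top of the essential spectrum $[-2,0]$ when $\beta=\beta_{cr}$ and $d\ge5$ --- it carries a strictly positive eigenfunction $\psi_\beta\in\ell^2(\Z^d)$ (positivity from positivity of $e^{tH_\beta}$). I would first show $e^{-\lambda(\beta)t}e^{tH_\beta}\to\Pi$ strongly, $\Pi$ the orthogonal projection onto $\C\psi_\beta$ --- from the spectral gap when $\beta>\beta_{cr}$, and by dominated convergence in the spectral representation when $\beta=\beta_{cr}$, the spectral measure off the eigenvalue being absolutely continuous on $[-2,0]$ --- so that $e^{-\lambda(\beta)t}(e^{tH_\beta}\delta_x)(0)\to\psi_\beta(0)\psi_\beta(x)/\|\psi_\beta\|_2^2$. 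If $\beta>\beta_{cr}$, a Combes--Thomas estimate gives exponential decay of $\psi_\beta$ and a uniform-in-$t$ exponential bound on this kernel, so one may sum over $x$ to get $e^{-\lambda(\beta)t}Z_{\beta,t}\to\psi_\beta(0)\sum_y\psi_\beta(y)/\|\psi_\beta\|_2^2$, hence $P_{\beta,t}(X(t)=x)\to\psi_\beta(x)/\sum_y\psi_\beta(y)$, a genuine probability law under which $X(t)$ is tight. If $\beta=\beta_{cr}$ and $d\ge5$, then $\psi_{\beta_{cr}}$ is proportional to the lattice Green function $(-\Delta)^{-1}\delta_0$, which lies in $\ell^2\setminus\ell^1$ precisely for $d\ge5$, so $\sum_y\psi_{\beta_{cr}}(y)=\infty$; correspondingly $Z_{\beta_{cr},t}\to\infty$ (indeed $Z_{\beta_{cr},t}\uparrow E_0[e^{\beta_{cr}\I(\infty)}]=\infty$, $\I(\infty)$ being $\mathrm{Exp}(\beta_{cr})$-distributed under $P_0$) while $(e^{tH_{\beta_{cr}}}\delta_x)(0)$ stays bounded, so $P_{\beta_{cr},t}(X(t)=x)\to0=\psi_{\beta_{cr}}(x)/\sum_y\psi_{\beta_{cr}}(y)$.

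\emph{Transient subcritical regime (part (i)).} For $d\ge3$ and $0<\beta<\beta_{cr}$ the reference walk is transient, so $\I(t)\uparrow\I(\infty)<\infty$ $P_0$-a.s.\ and $\I(\infty)\sim\mathrm{Exp}(\beta_{cr})$ (a $\mathrm{Geom}(\beta_{cr})$ sum of $\mathrm{Exp}(1)$ sojourns, $\beta_{cr}=1/\int_0^\infty p_t(0,0)\,dt$); hence $\sup_tE_0[e^{p\beta\I(t)}]<\infty$ for any $p<\beta_{cr}/\beta$, so $\{e^{\beta\I(t)}\}_t$ is uniformly integrable and $Z_{\beta,t}\to Z_\infty:=E_0[e^{\beta\I(\infty)}]\in(1,\infty)$. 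For the scaling limit I would fix a large $s$, condition on $\mathcal F_s$, and write by the Markov property
\[
E_0\!\left[e^{\beta\I(t)}\mathbf 1_{X(t)/\sqrt t\in A}\right]=E_0\!\left[e^{\beta\I(s)}\,E_{X(s)}\!\left[e^{\beta\widetilde\I(t-s)}\mathbf 1_{\widetilde X(t-s)/\sqrt t\in A}\right]\right];
\]
the factor $e^{\beta\widetilde\I(t-s)}$ asymptotically only senses a bounded neighbourhood of $0$ while $\widetilde X(t-s)/\sqrt t$ obeys the diffusive scaling limit regardless of its (shifted) starting point, so the inner expectation tends to $E_{X(s)}[e^{\beta\widetilde\I(\infty)}]\,\nu(A)$ with $\nu=N(0,I_d)$; the convergence being dominated uniformly in the starting point (moving the start away from $0$ only decreases $\widetilde\I$), one lets $s$ out, applies the Markov property once more to get $E_0[e^{\beta\I(t)}\mathbf 1_{X(t)/\sqrt t\in A}]\to Z_\infty\nu(A)$, and divides by $Z_{\beta,t}\to Z_\infty$ to conclude $P_{\beta,t}(X(t)/\sqrt t\in\cdot)\Rightarrow N(0,I_d)$.

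\emph{Critical regime in $d=3,4$ (part (ii)(a)) --- the hard part.} Now $\lambda(\beta_{cr})=0$ is not an eigenvalue but $Z_{\beta_{cr},t}\to\infty$ sub-exponentially, and I would combine a last-exit decomposition with resolvent asymptotics. With $L_t:=\sup\{s\le t:X(s)=0\}$ one has $\I(t)=\I(L_t)$, so $e^{\beta_{cr}\I(t)}$ reweights only the pre-$L_t$ path, and conditionally on $L_t=\ell$ the path on $(\ell,t]$ is a free walk leaving and then avoiding $0$; thus $X(t)/\sqrt t$ given $L_t/t=u$ is asymptotically $N\!\big(0,\tfrac{1-u}{d}I_d\big)$, i.e.\ has characteristic function $e^{-|\phi|^2(1-u)/(2d)}$, the avoidance constraint being costless at the diffusive scale. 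It remains to identify the $P_{\beta_{cr},t}$-law of $L_t/t$, which by Feynman--Kac and the Markov property is
\[
P_{\beta_{cr},t}(L_t\in d\ell)=\frac{(e^{\ell H_{\beta_{cr}}}\delta_0)(0)\,q(t-\ell)}{Z_{\beta_{cr},t}}\,d\ell,\qquad q(s):=\tfrac1{2d}\sum_{|y|=1}P_y(X(u)\ne0,\ 0<u\le s)\longrightarrow\beta_{cr}.
\]
For the two $t$-functions I would use the rank-one perturbation formula
\[
(z-H_\beta)^{-1}=(z-\Delta)^{-1}+\frac{\beta}{1-\beta\,g_z}\,\big((z-\Delta)^{-1}\delta_0\big)\otimes\big((z-\Delta)^{-1}\delta_0\big),\qquad g_z:=\big((z-\Delta)^{-1}\delta_0\big)(0),\quad z>0,
\]
together with $\int_0^\infty e^{-z\ell}(e^{\ell H_{\beta_{cr}}}\delta_0)(0)\,d\ell=g_z/(1-\beta_{cr}g_z)$ and $\int_0^\infty e^{-zt}Z_{\beta_{cr},t}\,dt=1/\!\left(z\,(1-\beta_{cr}g_z)\right)$. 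Since $1-\beta_{cr}g_z=\beta_{cr}\int_0^\infty(1-e^{-z\ell})p_\ell(0,0)\,d\ell$ and $p_\ell(0,0)\sim c_d\ell^{-d/2}$, this quantity vanishes like $z^{1/2}$ for $d=3$ and like $z\log(1/z)$ for $d=4$ as $z\downarrow0$; plugging this in and invoking Tauberian theorems (the relevant $t$-functions being monotone, or via their integrated versions) gives $Z_{\beta_{cr},t}\sim c_3\sqrt t$ resp.\ $c_4\,t/\log t$ and $(e^{\ell H_{\beta_{cr}}}\delta_0)(0)\sim c_3'\ell^{-1/2}$ resp.\ $c_4'/\log\ell$, whence, using $q(t-\ell)\to\beta_{cr}$, the law of $L_t/t$ under $P_{\beta_{cr},t}$ converges to the density $\tfrac1{2\sqrt u}$ on $[0,1]$ when $d=3$ and to the uniform law on $[0,1]$ when $d=4$. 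Integrating $e^{-|\phi|^2(1-u)/(2d)}$ against these limiting laws then produces exactly the two displayed limits (the $d=4$ one after the substitution $u\mapsto1-u$). The real obstacle is this last step: it demands sharp control of the lattice Green function $g_z$ as $z\downarrow0$ --- including the marginal logarithm at $d=4$ --- a Tauberian inversion carried to a \emph{joint} limit theorem for $(L_t/t,\,X(t)/\sqrt t)$ uniform in $\phi$, and a check that the contributions of $u$ near $0$ and $1$, where $(e^{\ell H_{\beta_{cr}}}\delta_0)(0)$ and $q(t-\ell)$ are not yet in their asymptotic regimes, are negligible.
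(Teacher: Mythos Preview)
This theorem is not proved in the paper at all: it is a summary of prior results of Cranston and Molchanov, quoted from \cite{CM} and \cite{CM2} (the paper says explicitly that part~(ii)(a) is \cite[Theorem~2.1]{CM2}). So there is no proof in this paper to compare your sketch against.

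That said, your outline is very much in the spirit of the resolvent/spectral method that \cite{CM,CM2} use and that the present paper adapts for its own results in the recurrent subcritical regime. Parts~(i) and~(iii) are standard and your arguments are fine. For part~(ii)(a) your last-exit decomposition combined with the rank-one resolvent identity and a Tauberian inversion is precisely the line of attack in \cite{CM2}; the paper itself carries out the analogous Tauberian argument for $d=1,2$ in Proposition~\ref{pr:pbeta00}, so you can see there how the inversion step is executed in detail.

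One remark on part~(ii)(b): your reasoning that $\psi_{\beta_{cr}}\in\ell^2\setminus\ell^1$ for $d\ge5$ and hence $P_{\beta_{cr},t}(X(t)=x)\to0$ is arithmetically correct and consistent with the paper's own computation in the proof of Theorem~\ref{th:TR_theorem} (where $\psi_{\beta_{cr}}(x)=P_x(\tau_0<\infty)\asymp|x|^{2-d}$). This means, however, that $X(t)$ is \emph{not} tight under $P_{\beta_{cr},t}$ in this case, so the stated limit $\psi_\beta(x)/\sum_y\psi_\beta(y)$ is to be read as $0$; it is worth being explicit about this, since the superficially parallel statement in~(iii) yields a genuine probability law.
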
 
 The limit as a mixture of normals in part ii-(a) is from \cite[Theorem 2.1]{CM2}. The variance here differs from that paper,  due to our choice to work with the normalized  Laplacian \eqref{eq:laplacian}, in accordance with \cite{CM}.  
    \subsection{Convergence of Polymer}
    \label{sec:poly_conv}
    The main goal in this section is to understand the distribution of $X$, restricted to some fixed interval $[0,T]$ under $P_{\beta,t}$ as $t\to\infty$. We will show that as in Theorem \ref{th:CM_SR}, the polymer converges to a Doob transform. Our model is essentially a ``soft" version of random walk conditioned not to hit the origin, and a discrete version of penalized Brownian motion, for which similar results have been proved. 
  
    We begin with a general discussion motivated by  Theorem \ref{th:CM_SR}. Let $\beta \in \R$ and consider the problem  
  \begin{equation} 
  \label{eq:prob} 
  \begin{cases} 
  (H_\beta -\lambda(\beta)) u =0;\\
   u \ge 0
   \end{cases} 
   \end{equation}
   Define the hitting time $\tau_0$, 
   $$ \tau_0=\inf\{t\ge 0: X(t) = 0\}. $$
   For $\varphi= (\varphi_1,\dots,\varphi_d) \in \R^d$, let $\Phi(\varphi)=\frac {1}{d}\sum_{j=1}^d (1-\cos \varphi_j)$, and let 
   $$ A_0(x) = \frac{1}{\pi^d} \int_{[0,\pi]^d} \frac{ 1- \cos (\scp{\varphi,x})}{\Phi(\varphi)} d \varphi.$$ 
   We have the following result:
 \begin{theorem}
\label{th:psib_desc}
 \begin{enumerate}~ 
 \item Suppose $\beta > \beta_{cr}$. Then the cone of bounded solutions to \eqref{eq:prob}  is spanned by the function $E_x e^{-\lambda(\beta) \tau_0}$.
\item Suppose  $\beta \le \beta_{cr}$. Then the cone of solutions to  \eqref{eq:prob} 
is spanned by $u$, where 
$$u(x)=\begin{cases} 1- \beta A_0(x)&d=1,2;\\
 1- \frac{\beta}{\beta_{cr}}P_x (\tau_0=\infty)& d\ge 3.\end{cases}$$
\end{enumerate} 
 \end{theorem}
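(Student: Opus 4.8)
\emph{Plan.} Write $\lambda:=\lambda(\beta)$. The point of departure is that \eqref{eq:prob} decouples into two conditions on $u$: that $(\Delta-\lambda)u(x)=0$ for every $x\neq0$, so $u$ is $\lambda$-harmonic for the walk away from the origin, and the single scalar relation $(\Delta-\lambda)u(0)=-\beta u(0)$, a discrete Robin-type boundary condition at $0$. In each part I would first check that the displayed function is a nonnegative solution, and then show that an arbitrary (bounded, in part (i)) nonnegative solution is a scalar multiple of it; the multiplier is automatically $\ge0$ because the displayed functions are strictly positive.

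For part (i), $\beta>\beta_{cr}$ forces $\lambda>0$ by Proposition~\ref{lem:lambda_class}. Put $h(x)=E_xe^{-\lambda\tau_0}$. Then $0<h\le1$ ($h>0$ by irreducibility), and a last-exit decomposition at $\tau_0$ (using that $e^{-\lambda\tau_0}=0$ on $\{\tau_0=\infty\}$) gives $G_\lambda(x,0)=h(x)\,G_\lambda(0,0)$, where $G_\lambda$ is the kernel of $(\lambda-\Delta)^{-1}$ and $0<G_\lambda(0,0)\le1/\lambda$. From $(\lambda-\Delta)G_\lambda(\cdot,0)=\delta_0$ we get $(\Delta-\lambda)h=-\delta_0/G_\lambda(0,0)$, so $h$ satisfies the Robin condition, hence solves \eqref{eq:prob}, precisely when $\beta=1/G_\lambda(0,0)$ --- which is exactly the resolvent identity characterizing $\lambda(\beta)$ when $\beta>\beta_{cr}$ (and also follows a posteriori from {\bf CM-I}, since the positive $\ell^2$ eigenfunction of $H_\beta$ at $\lambda$ must, by the uniqueness argument below, be a multiple of $h$). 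For uniqueness: given a bounded solution $u$, the function $w:=u-u(0)h$ is bounded and, using $\beta=1/G_\lambda(0,0)$, satisfies $(\Delta-\lambda)w=0$ on all of $\Z^d$; evaluating along a sequence on which $|w|$ approaches $\|w\|_\infty$ gives $(1+\lambda)\|w\|_\infty\le\|w\|_\infty$, hence $w\equiv0$ and $u=u(0)h$.

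For part (ii), $\beta\le\beta_{cr}$ gives $\lambda=0$, so the equation is $\Delta u=-\beta u(0)\,\delta_0$ on $\Z^d$. If $d\ge3$ the walk is transient; with $G_0$ the Green function and, by {\bf CM-I}, $\beta_{cr}=1/G_0(0,0)=P_0(\tau_0=\infty)$, one checks from $\Delta G_0(\cdot,0)=-\delta_0$ that $1-\tfrac{\beta}{\beta_{cr}}P_x(\tau_0=\infty)=1-\beta G_0(0,0)+\beta G_0(x,0)$ solves \eqref{eq:prob} and is $\ge0$ since $\beta/\beta_{cr}\le1$; conversely, for a nonnegative solution $u$ the function $w:=u-\beta u(0)\,G_0(\cdot,0)$ is harmonic on $\Z^d$ and nonnegative --- directly when $\beta\le0$, and by the Riesz decomposition of the nonnegative superharmonic function $u$ when $0<\beta\le\beta_{cr}$ --- hence constant, because nonnegative harmonic functions on $\Z^d$ are constant; evaluating at $0$ gives $w\equiv u(0)(1-\beta/\beta_{cr})$ and thus $u(x)=u(0)\bigl(1-\tfrac{\beta}{\beta_{cr}}P_x(\tau_0=\infty)\bigr)$. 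If $d=1,2$ the walk is recurrent, $\beta_{cr}=0$, hence $\beta\le0$, and the role of $G_0(\cdot,0)$ is taken by the potential kernel $A_0$, which one verifies satisfies $A_0(0)=0$, $A_0\ge0$ and $\Delta A_0=\delta_0$ (the last from the identity $\Delta_x\!\left(1-\cos\langle\varphi,x\rangle\right)=\Phi(\varphi)\cos\langle\varphi,x\rangle$ together with $\tfrac1{\pi^d}\int_{[0,\pi]^d}\cos\langle\varphi,x\rangle\,d\varphi=\delta_0(x)$). Then $1-\beta A_0\ge1$ solves \eqref{eq:prob}, and for a nonnegative solution $u$ one must show the harmonic function $w:=u+\beta u(0)A_0$ is constant, which yields $u=u(0)(1-\beta A_0)$ since $A_0(0)=0$.

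The main obstacle is precisely this last step when $d=1,2$. In contrast to the transient case, the harmonic function $w=u+\beta u(0)A_0$ need not be sign-definite --- it is only bounded below by the unbounded function $\beta u(0)A_0$ --- so the Liouville theorem for nonnegative harmonic functions on $\Z^d$ does not apply directly, and one has to combine recurrence, the precise growth of $A_0$ (linear for $d=1$, logarithmic for $d=2$), and the nonnegativity of $u$; the case $d=1$, where deleting the origin disconnects the lattice, needs particular care. Everything else --- the verifications that the displayed functions solve \eqref{eq:prob}, the identity $G_\lambda(\cdot,0)=h(\cdot)\,G_\lambda(0,0)$ and its $\lambda=0$ analogue $G_0(x,0)=P_x(\tau_0<\infty)\,G_0(0,0)$, the computation $\Delta A_0=\delta_0$, and the maximum-modulus estimate in part (i) --- is routine.
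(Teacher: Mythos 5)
Your part (i) is correct and takes a different, arguably more elementary, route than the paper: the paper applies optional stopping to the bounded martingale $e^{-\lambda(\beta)(t\wedge\tau_0)}u(X(t\wedge\tau_0))$ to get $u(x)=E_xe^{-\lambda(\beta)\tau_0}$ directly, while you subtract $u(0)h$ and use a maximum-principle estimate for bounded $\lambda$-harmonic functions. Your part (ii) for $d\ge3$ (Riesz decomposition of the nonnegative superharmonic $u$, then Liouville for nonnegative harmonic functions on $\Z^d$) is also correct and is again a different route from the paper, which carries out everything through the walk killed at the origin and its Martin boundary.

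The genuine gap is exactly where you flag it: for $d=1,2$ you never complete uniqueness, explaining only why the naive argument breaks ($w=u+\beta u(0)A_0$ is harmonic on all of $\Z^d$ but has no sign). The paper's device is different and avoids looking at $w$ at all: it passes to $\ol{u}(x)=(1-\delta_0(x))u(x)$, which vanishes at $0$ and satisfies $H_{-\infty}\ol{u}=-\tfrac{u(0)}{2d}\ch_B$; subtracting the (nonnegative, bounded) resolvent term $\tfrac{1}{2d}R_0^{-\infty}(\cdot,\ch_B)=P_\cdot(\tau_0<\infty)$ produces an $H_{-\infty}$-harmonic function, and Lemma~\ref{lem:martin} (Martin boundary of the killed walk is spanned by $A_0$) then pins it down as a multiple of $A_0$. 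That Martin-boundary lemma is precisely the missing ingredient you would need to supply.

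Your worry about $d=1$ is actually more than a technicality. On $\Z$, deleting the origin disconnects the lattice into two ends, so the killed walk has a two-point Martin boundary, and one can exhibit a genuinely two-dimensional cone of nonnegative solutions of \eqref{eq:prob}: every $u(x)=u(0)+c_+\max(x,0)+c_-\max(-x,0)$ with $c_\pm\ge0$ and $c_++c_-=-2\beta u(0)$ solves it. Thus the stated uniqueness fails for $d=1$, and correspondingly the Riemann--Lebesgue step in the proof of Lemma~\ref{lem:martin} does not apply there (the integrand $\sin(\varphi x)/\Phi(\varphi)$ is not integrable near $0$ when $d=1$). The function $\psi_\beta(x)=1-\beta|x|$ is the unique \emph{symmetric} nonnegative solution, and it is the one that matters for Theorems~\ref{th:Zbeta}--\ref{th:scaling_limit}, but a correct proof must state and use symmetry (or an equivalent selection principle) rather than outright uniqueness in the cone when $d=1$; for $d=2$ the Riemann--Lebesgue argument does go through and the cone is one-dimensional as claimed.
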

 The solution to \eqref{eq:prob} given in Theorem \ref{th:psib_desc} will be henceforth denoted by $\psi_\beta$, in agreement with the notation in Theorem \ref{th:CM_SR}. We also comment  that when $d=1$, $A_0(x)=|x|$. 
 \begin{theorem}
  \label{th:TR_theorem}
  The  Markov chain generated by $(H_\beta - \lambda(\beta) )^{\psi_\beta}$ is recurrent if and only if $\beta \ge \beta_{cr}$. 
 It  is positive recurrent if and only if $\beta>\beta_{cr}$ or $\beta =\beta_{cr}$ and $d \ge 5$. 
 \end{theorem}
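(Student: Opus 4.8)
The plan is to use the fact that $(H_\beta-\lambda(\beta))^{\psi_\beta}$, being the $h$-transform of a self-adjoint operator by a strictly positive solution of \eqref{eq:prob}, generates a \emph{reversible} continuous-time Markov chain on $\Z^d$, and then to read off recurrence and positive recurrence from its reversing measure and the associated network of conductances. Writing $Q=(H_\beta-\lambda(\beta))^{\psi_\beta}$ and using $(H_\beta-\lambda(\beta))\psi_\beta=0$, one checks directly that $Q$ is conservative with bounded jump rates, that across a nearest-neighbor edge $x\sim y$ the rate is $Q(x,y)=\tfrac1{2d}\,\psi_\beta(y)/\psi_\beta(x)$, and that $Q$ is reversible with respect to $m(x)=\psi_\beta(x)^2$; equivalently, $Q$ is the continuous-time random walk on $\Z^d$ with edge conductances $c_{xy}=\tfrac1{2d}\psi_\beta(x)\psi_\beta(y)$. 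Since $m$ is then a stationary measure and the chain is irreducible and non-explosive, it is positive recurrent if and only if $\sum_x\psi_\beta(x)^2<\infty$, and, by the standard electrical-network criterion for reversible chains, recurrent if and only if the effective resistance $R_{\mathrm{eff}}(0\leftrightarrow\infty)$ in the network $(c_{xy})$ is infinite.

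It then remains to feed in the explicit $\psi_\beta$ of Theorem \ref{th:psib_desc} together with classical lattice asymptotics. If $\beta>\beta_{cr}$ then $\lambda(\beta)>0$ and $\psi_\beta(x)=E_xe^{-\lambda(\beta)\tau_0}=G_{\lambda(\beta)}(x,0)/G_{\lambda(\beta)}(0,0)$, where $G_\mu$ denotes the $\mu$-resolvent kernel of the reference walk; since $\lambda(\beta)$ lies off the spectrum $[-2,0]$ of $\Delta$, $G_{\lambda(\beta)}(\cdot,0)$ decays exponentially, so $\sum_x\psi_\beta(x)^2<\infty$ and the chain is positive recurrent. For $\beta\le\beta_{cr}$ we have $\lambda(\beta)=0$. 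In dimensions $d=1,2$ we have $\beta_{cr}=0$ by \textbf{CM-I} (the reference walk is recurrent), so at $\beta=\beta_{cr}$ we get $\psi_\beta\equiv1$, hence $Q=\Delta$, which is recurrent but not positive recurrent; while for $\beta<0$ we get $\psi_\beta(x)=1-\beta A_0(x)$, which grows like $|x|$ when $d=1$ and like $\log|x|$ when $d=2$, and an elementary series-resistance estimate on $\Z$ (resp.\ a finite-energy radial-flow estimate on $\Z^2$) gives $R_{\mathrm{eff}}(0\leftrightarrow\infty)<\infty$, so the chain is transient. When $d\ge3$ and $\beta<\beta_{cr}$, the function $\psi_\beta=1-(\beta/\beta_{cr})P_\cdot(\tau_0=\infty)$ is bounded above and below by positive constants, so by Rayleigh monotonicity the network is transient like simple random walk on $\Z^d$ and $\sum_x\psi_\beta(x)^2=\infty$. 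Finally, when $d\ge3$ and $\beta=\beta_{cr}$, $\psi_\beta(x)=P_x(\tau_0<\infty)=G(x,0)/G(0,0)\asymp|x|^{2-d}$, so $\sum_x\psi_\beta(x)^2\asymp\sum_x|x|^{4-2d}<\infty$ if and only if $d\ge5$; and for recurrence in $d=3,4$ one applies the Nash--Williams inequality to the cutsets $\Pi_n$ of edges joining $\{|x|=n\}$ to $\{|x|=n+1\}$, whose total conductance is of order $n^{d-1}\cdot n^{4-2d}=n^{3-d}$, giving $R_{\mathrm{eff}}(0\leftrightarrow\infty)\gtrsim\sum_n n^{d-3}=\infty$.

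The reversibility computation and the elementary resistance/flow bounds are routine; the real work is concentrated in two places. First, one must invoke the precise polynomial and logarithmic rates for the auxiliary functions in Theorem \ref{th:psib_desc}---the transient Green's function $G(x,0)\asymp|x|^{2-d}$ and the two-dimensional potential kernel $A_0(x)\sim c\log|x|$---with the correct exponents. Second, and more delicate, are the two borderline regimes: $\beta=\beta_{cr}$ with $d=3,4$, where the chain is recurrent yet not positive recurrent, so recurrence cannot be deduced from finiteness of $m$ and must instead be obtained by bounding $R_{\mathrm{eff}}$ from below via Nash--Williams; and $\beta<\beta_{cr}=0$ with $d=2$, where transience hinges on producing a unit flow of finite energy against conductances growing like $(\log|x|)^2$. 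These are the steps I expect to demand the most care.
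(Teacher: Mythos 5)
Your proof is correct, but it takes a genuinely different route from the paper's. The paper works directly with the resolvent of the transformed chain: it notes that the Green's function of $(H_\beta-\lambda(\beta))^{\psi_\beta}$ at the origin equals $I^\beta(\lambda(\beta))=I(\lambda(\beta))/(1-\beta I(\lambda(\beta)))$, and reads off recurrence/transience from whether this is infinite or finite, using $\beta_{cr}=1/I(0^+)$ to sort the cases; positive recurrence is then decided, exactly as in your argument, by observing that $\psi_\beta^2$ is (up to a constant) the unique invariant measure and checking its summability, with the decay $\psi_{\beta_{cr}}(x)\asymp|x|^{2-d}$ giving $\ell^1$-summability of $\psi_{\beta_{cr}}^2$ precisely for $d\ge5$. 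You instead recast the transformed generator as a reversible random walk on the conductance network $c_{xy}=\tfrac1{2d}\psi_\beta(x)\psi_\beta(y)$ and settle recurrence/transience by effective-resistance estimates (series resistance on $\Z$, a finite-energy radial flow on $\Z^2$, Rayleigh monotonicity for bounded $\psi_\beta$ in $d\ge3$, Nash--Williams for the borderline $\beta=\beta_{cr}$, $d\ge3$). The paper's argument is shorter in context because the objects $I(\lambda)$ and $I^\beta(\lambda)$ are already set up and computed elsewhere; your network argument trades exact resolvent identities for growth estimates on $\psi_\beta$ and is more robust and geometric, at the cost of having to verify reversibility and carry out the (standard but nontrivial) flow/cutset bounds. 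One small point worth stating explicitly in your write-up: your identification $\psi_\beta(x)=R_{\lambda(\beta)}(x,0)/R_{\lambda(\beta)}(0,0)$ for $\beta>\beta_{cr}$ uses the strong Markov property in the form $R_\lambda(x,0)=E_x[e^{-\lambda\tau_0}]\,R_\lambda(0,0)$, and the exponential decay of $R_\lambda(\cdot,0)$ for $\lambda>0$ should be cited or derived (e.g.\ from the Fourier representation \eqref{eq:RlambdaFourier} and the spectral gap $\Phi\ge0$ with $\lambda>0$). Also note that the Nash--Williams bound you use for $d=3,4$ actually gives $R_{\mathrm{eff}}=\infty$ for every $d\ge3$ at $\beta=\beta_{cr}$, which is consistent (positive recurrence implies recurrence in $d\ge5$) and slightly streamlines the case split.
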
 
 We now focus on the subcritical phase $\beta <\beta_{cr} =0$ for $d=1,2$.  Note that when  $d\ge3$, the transience of $X$ under the reference measure implies that the total time spent at $0$  is $\mbox{Exp}(\beta_{cr})$,  because it is the sum of a $\mbox{Geom}(\beta_{cr})$-distributed number of independent $\mbox{Exp} (1)$-distributed random variables, each exponential random variable representing the duration of a single visit to $0$. As a result, when $d\ge 3$ and $\beta<\beta_{cr}$ we have $\lim_{t\to\infty} Z_{\beta,t} = \frac{\beta_{cr}}{\beta_{cr}-\beta} \in (0,\infty)$. The analysis carried out in \cite{CM} for $d\ge 3,~\beta \in (0,\beta_{cr})$ rests only on the fact that in this phase  $\lim_{t\to\infty}Z_{\beta,t} \in (0,\infty)$ and therefore extends seamlessly to $\beta< \beta_{cr}$.  
    
What makes the parameter regime $d=1,2$, $\beta<0$, interesting is the following.  Firstly, this is the only parameter regime for which  $\lim_{t\to\infty} Z_{\beta,t}=0$ (but not exponentially). Secondly, it exhibits an interplay between recurrence for the reference measure, working in favor of returning to $0$, versus the negative parameter,  which penalizes  staying at $0$. In spirit, this regime resembles the critical phase for $d=3,4$, with some extra care required due to recurrence which causes some integrals (resolvents) to blowup. 
 
Let $Z_{\beta,t} (x)= E_x [e^{\beta\I(t)}]$. Note that   $Z_{\beta,t}=Z_{\beta,t}(0)$. We have: 
        \begin{theorem}
        \label{th:Zbeta}
         Let $\beta<0$. Then 
         \begin{enumerate}
           \item
   \label{Zbeta_cor}
    $Z_{\beta,t} \sim \begin{cases}-\frac{1}{\beta} \sqrt{\frac{2}{\pi t}} & d=1;\\  -\frac{\pi}{\beta \ln t} & d=2.
       \end{cases}$   
          \item 
          \label{Zbeta_martin}
           $Z_{\beta,t}(x)\sim  \psi_\beta(x)Z_{\beta,t}$.
               \end{enumerate}  
           \end{theorem}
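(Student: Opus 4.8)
The plan is to pass to the Laplace transform in $t$, identify it with the resolvent of the Feynman--Kac operator $H_\beta=\Delta+\beta\delta_0$, exploit the rank-one structure of $\beta\delta_0$, and conclude by a Tauberian argument. For $\mu>0$ set
\[
g_\mu(x):=\int_0^\infty e^{-\mu t}Z_{\beta,t}(x)\,dt .
\]
Since $\beta<0$ one has $e^{\beta\I(t)}\le 1$, hence $Z_{\beta,t}(x)\le 1$ and $g_\mu$ is bounded with $\|g_\mu\|_\infty\le 1/\mu$. The Feynman--Kac semigroup $T_tf(x)=E_x\!\left[e^{\beta\I(t)}f(X(t))\right]$ is a contraction semigroup on $\ell^\infty(\Z^d)$ with bounded generator $H_\beta$, so $g_\mu=\int_0^\infty e^{-\mu t}T_t\mathbf{1}\,dt$ solves $(\mu-H_\beta)g_\mu=\mathbf{1}$, i.e.
\[
(\mu-\Delta)g_\mu=\mathbf{1}+\beta\,g_\mu(0)\,\delta_0 .
\]
Writing $G_\mu(x,y)=E_x\!\int_0^\infty e^{-\mu t}\delta_y(X(t))\,dt$ for the resolvent kernel of the reference walk, recalling that bounded solutions of the lattice Poisson equation are unique (a bounded-martingale argument) and given by convolution with $G_\mu$, and using $(\mu-\Delta)^{-1}\mathbf{1}=\mu^{-1}\mathbf{1}$, one solves the scalar relation obtained at $x=0$ to obtain
\[
g_\mu(x)=\frac1\mu+\frac{\beta\,G_\mu(x,0)}{\mu\,\bigl(1-\beta G_\mu(0,0)\bigr)},\qquad\text{in particular}\qquad g_\mu(0)=\frac{1}{\mu\,\bigl(1-\beta G_\mu(0,0)\bigr)},
\]
with $1-\beta G_\mu(0,0)>1$ for every $\mu>0$ because $\beta<0$ and $G_\mu(0,0)>0$.

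The next step is the classical small-$\mu$ behavior of the lattice Green function, read off from $G_\mu(0,0)=(2\pi)^{-d}\int_{[-\pi,\pi]^d}\bigl(\mu+\Phi(\varphi)\bigr)^{-1}\,d\varphi$: as $\mu\downarrow 0$,
\[
G_\mu(0,0)\sim(2\mu)^{-1/2}\quad(d=1),\qquad G_\mu(0,0)\sim-\pi^{-1}\ln\mu\quad(d=2).
\]
In parallel, $G_\mu(0,0)-G_\mu(x,0)=(2\pi)^{-d}\int_{[-\pi,\pi]^d}\frac{1-\cos\scp{\varphi,x}}{\mu+\Phi(\varphi)}\,d\varphi$ converges, by dominated convergence, to $A_0(x)$ as $\mu\downarrow 0$: the factor $1-\cos\scp{\varphi,x}=O(|\varphi|^2)$ tames the $|\varphi|^{-2}$ singularity of $1/\Phi$ at the origin, so the limiting integrand is integrable. (The recurrence condition $d\le 2$ is exactly what makes $G_\mu(0,0)\to\infty$, i.e.\ what keeps the penalization asymptotically active.) Plugging these in, and using $1-\beta G_\mu(0,0)\to+\infty$, gives
\[
g_\mu(0)\sim-\frac1\beta\sqrt{\frac2\mu}\quad(d=1),\qquad g_\mu(0)\sim-\frac\pi\beta\cdot\frac{1}{\mu\ln(1/\mu)}\quad(d=2),
\]
together with $g_\mu(x)=\bigl(1-\beta\,(G_\mu(0,0)-G_\mu(x,0))\bigr)\,g_\mu(0)\sim\psi_\beta(x)\,g_\mu(0)$ as $\mu\downarrow 0$, where $\psi_\beta=1-\beta A_0$ is the function of Theorem~\ref{th:psib_desc} (strictly positive since $\beta<0$).

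Finally I would invert via Karamata's Tauberian theorem. The map $t\mapsto Z_{\beta,t}(x)$ is nonincreasing, since $\I(t)$ is nondecreasing and $\beta<0$; hence the monotone version of the Hardy--Littlewood--Karamata theorem applies to $g_\mu$. The transform $g_\mu(0)$ is regularly varying at $0$ of index $-\tfrac12$ with constant slowly varying factor when $d=1$, and of index $-1$ with slowly varying factor $-\pi/\bigl(\beta\ln(1/\mu)\bigr)$ when $d=2$; the theorem then yields (using $\Gamma(\tfrac12)=\sqrt\pi$, $\Gamma(1)=1$)
\[
Z_{\beta,t}\sim-\frac1\beta\sqrt{\frac2{\pi t}}\quad(d=1),\qquad Z_{\beta,t}\sim-\frac\pi{\beta\ln t}\quad(d=2),
\]
which is part~\ref{Zbeta_cor}. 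Applying the same theorem to $g_\mu(x)$, whose leading term at $\mu=0$ is $\psi_\beta(x)$ times that of $g_\mu(0)$, gives $Z_{\beta,t}(x)\sim\psi_\beta(x)Z_{\beta,t}$, which is part~\ref{Zbeta_martin}.

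I expect the main obstacle to be the two analytic inputs feeding the Tauberian machine: a clean justification of the resolvent identity $g_\mu=(\mu-H_\beta)^{-1}\mathbf{1}$ on an appropriate space together with uniqueness of bounded lattice-harmonic-type solutions, and the precise $\mu\downarrow 0$ expansion of $G_\mu$ --- in particular the logarithmic $d=2$ case, where the slowly varying factor must be carried correctly through the generalized Tauberian theorem, and the $d=1$ case, where the constant $\Gamma(\tfrac12)=\sqrt\pi$ must come out right. The hypotheses are used exactly here: $\beta<0$ both guarantees $1-\beta G_\mu(0,0)>0$ and supplies the monotonicity needed for the Tauberian step, while $d\le 2$ forces $G_\mu(0,0)\to\infty$, so that the normalization is by a slowly varying/power correction rather than by a positive constant (the latter being the $d\ge 3$ picture recalled in the excerpt).
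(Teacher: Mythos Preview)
Your argument is correct. Both you and the paper begin by identifying the Laplace transform $g_\mu(x)=\int_0^\infty e^{-\mu t}Z_{\beta,t}(x)\,dt$ with the resolvent expression $\frac{1-\beta A_\mu(x)}{\mu(1-\beta I(\mu))}$, and both use the same small-$\mu$ asymptotics for $I(\mu)=G_\mu(0,0)$ (the paper's Lemma~\ref{lem:Ilambda}) together with $G_\mu(0,0)-G_\mu(x,0)\to A_0(x)$. The difference lies in the inversion step.

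For part~\ref{Zbeta_cor} the paper takes a spectral route: it first proves the finer statement $p_\beta(t,0,0)\sim c_d\,t^{-3/2}$ (resp.\ $c_d\,(t\ln^2 t)^{-1}$) in Proposition~\ref{pr:pbeta00} by computing the density of the spectral measure $\mu_\beta$ from boundary values of $I^\beta(\lambda)$, and then integrates via $Z_{\beta,t}=-\beta\int_t^\infty p_\beta(s,0,0)\,ds$. For part~\ref{Zbeta_martin} the paper writes $Z_{\beta,t}(x)=Z_{\beta,t}-\beta\,(\tilde\psi_\cdot(x)*Z_{\beta,\cdot})(t)$ and estimates the convolution directly using the decay bound $\tilde\psi_t(x)=O(t^{-(d/2+1)})$. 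Your route bypasses both the spectral computation and the convolution analysis: the pathwise monotonicity of $t\mapsto e^{\beta\I(t)}$ (available precisely because $\beta<0$) lets you apply the monotone-density form of Karamata's Tauberian theorem to $g_\mu(x)$ in one stroke, and the constants come out right ($\Gamma(\tfrac12)=\sqrt\pi$ in $d=1$, $\Gamma(1)=1$ in $d=2$). This is cleaner and more elementary for the theorem as stated. The price is that you do not obtain the intermediate asymptotics of $p_\beta(t,0,0)$ itself, which the paper needs later (e.g.\ for the density of the last visit time in Corollary~\ref{cor:lasttimes}); so the paper's longer detour is not wasted work.
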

           Combining this theorem with a simple tightness argument leads to the proof of the following extension of Theorem \ref{th:CM_SR}: 
          \begin{theorem}
          \label{th:small times}
          Let $\beta<0$ and let $T>0$. Then  $P_{\beta,t}|_{{\cal F}_T}$  converges weakly as $t\to\infty$ to the distribution of the Markov process whose generator is  $(H_\beta)^{\psi_\beta}$, conditioned on $X(0)=0$.  The transition  function for this process, $q_\beta$, is given by 
          $$ q_\beta (t,x,y) = \frac{1}{\psi_\beta (x)} E_x\left [ e^{\beta \I(t)} \psi_\beta (X(t)) \delta_y (X(t))\right ] .$$ 
           \end{theorem}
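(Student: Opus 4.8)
The plan is to recognize the claimed limit as the law of the Doob $\psi_\beta$-transform of the Feynman--Kac dynamics, and then to establish convergence directly at the level of Radon--Nikodym derivatives on ${\cal F}_T$ by combining Theorem~\ref{th:Zbeta} with Scheffé's lemma; this single device yields both the finite-dimensional convergence and the tightness that the statement requires.

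\emph{Identifying the limit.} Since $\beta<0$ we have $\beta<\beta_{cr}$ and $\lambda(\beta)=0$, so Theorem~\ref{th:psib_desc} provides a strictly positive $\psi_\beta$ with $H_\beta\psi_\beta=0$ and $\psi_\beta(0)=1$ (note $A_0(0)=0$, and in the transient case $P_0(\tau_0=\infty)=0$). I would first verify that $M_t:=e^{\beta\I(t)}\psi_\beta(X(t))$ is a true $P_x$-martingale with $E_x M_t=\psi_\beta(x)$: the local martingale property is the standard Dynkin/Feynman--Kac consequence of $H_\beta\psi_\beta=0$, and since $0\le M_t\le\psi_\beta(X(t))$ while $\sup_{s\le t}\psi_\beta(X(s))$ is dominated by an affine ($d=1$) resp.\ logarithmic ($d=2$) function of the number of jumps of $X$ on $[0,t]$, which is Poisson and hence integrable, the family $\{M_s:s\le t\}$ is uniformly integrable and the upgrade is automatic. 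Defining $Q_x$ by $dQ_x/dP_x|_{{\cal F}_t}=M_t/\psi_\beta(x)$, the usual $h$-transform bookkeeping identifies $Q_x$ as the law of the Markov process with transition function $q_\beta(t,x,y)=\psi_\beta(x)^{-1}E_x[e^{\beta\I(t)}\psi_\beta(X(t))\delta_y(X(t))]$ and generator $(H_\beta)^{\psi_\beta}$ — it is genuinely Markov precisely because $E_x[e^{\beta\I(t)}\psi_\beta(X(t))]=\psi_\beta(x)$. Thus $Q_0$ is the candidate limit.

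\emph{The density on ${\cal F}_T$ and the passage to the limit.} Fix $T>0$ and take $t>T$. Writing $\I(t)=\I(T)+(\I(t)-\I(T))$ and applying the Markov property of $X$ at time $T$, I would compute
\[ D_t:=\frac{dP_{\beta,t}|_{{\cal F}_T}}{dP_0|_{{\cal F}_T}}=\frac{e^{\beta\I(T)}\,Z_{\beta,t-T}(X(T))}{Z_{\beta,t}}=M_T\cdot\frac{Z_{\beta,t-T}(X(T))}{\psi_\beta(X(T))\,Z_{\beta,t-T}}\cdot\frac{Z_{\beta,t-T}}{Z_{\beta,t}}, \]
using $e^{\beta\I(T)}=M_T/\psi_\beta(X(T))$. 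For each fixed $x$, Theorem~\ref{th:Zbeta}(ii) gives $Z_{\beta,s}(x)/(\psi_\beta(x)Z_{\beta,s})\to1$, and Theorem~\ref{th:Zbeta}(i) (for $d=1,2$; while $Z_{\beta,t}\to\beta_{cr}/(\beta_{cr}-\beta)>0$ for $d\ge3$) gives $Z_{\beta,t-T}/Z_{\beta,t}\to1$; since $X(T)$ is $P_0$-a.s.\ finite it follows that $D_t\to M_T$ $P_0$-a.s. As $D_t$ is a probability density, $E_0 D_t=1=E_0 M_T=\psi_\beta(0)$, so Scheffé's lemma yields $D_t\to M_T$ in $L^1(P_0)$, i.e.\ $\|P_{\beta,t}|_{{\cal F}_T}-Q_0|_{{\cal F}_T}\|_{\mathrm{TV}}\to0$, which in particular gives the asserted weak convergence on $D([0,T],\Z^d)$, with $q_\beta$ the transition function of the limit. (Equivalently, the same computation yields the finite-dimensional convergence directly, and tightness then follows because $L^1$-convergence makes $\{D_t\}$ uniformly integrable, so $\sup_t P_{\beta,t}$ is small on $P_0$-small events of ${\cal F}_T$; compact containment and the Aldous condition are then inherited from the non-explosion and tight jump count of the reference walk.)

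\emph{Main obstacle.} The two substantive inputs — the partition-function asymptotics of Theorem~\ref{th:Zbeta} and the explicit $\psi_\beta$ of Theorem~\ref{th:psib_desc} — are already in hand, so the remaining work is comparatively soft. The point that needs genuine care is the local-to-true martingale upgrade for $M$ together with the $h$-transform identification of $Q_0$ as the process generated by $(H_\beta)^{\psi_\beta}$; and the easy but essential observation that only \emph{pointwise} convergence of the renormalized resolvents $Z_{\beta,s}(x)/Z_{\beta,s}$ is available, which is exactly the hypothesis Scheffé's lemma is designed to exploit once one notes that the total masses both equal $1$.
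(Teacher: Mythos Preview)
Your argument is correct and is in fact cleaner than the paper's. Both proofs rest on the same two inputs --- the ratio asymptotics $Z_{\beta,t}(x)\sim\psi_\beta(x)Z_{\beta,t}$ from Theorem~\ref{th:Zbeta} and the identification of $\psi_\beta$ with $\psi_\beta(0)=1$ from Theorem~\ref{th:psib_desc} --- and both recognize the Radon--Nikodym derivative $D_t=e^{\beta\I(T)}Z_{\beta,t-T}(X(T))/Z_{\beta,t}$ on ${\cal F}_T$. The paper, however, splits the work: it first checks convergence of the finite-dimensional distributions by expanding $P_{\beta,t}(\cap\{X(t_i)=x_i\})$ as a product and applying the ratio asymptotics term by term, and then proves tightness separately via Fatou's lemma and the absolute continuity $Q_0|_{{\cal F}_T}\ll P_0|_{{\cal F}_T}$. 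You instead observe that $D_t\to M_T$ pointwise and that $E_0 D_t=1=E_0 M_T$, so Scheff\'e's lemma gives $D_t\to M_T$ in $L^1(P_0)$, hence convergence in total variation; this yields the finite-dimensional convergence and the tightness simultaneously and with no extra work. What your route buys is economy and a stronger conclusion (total variation rather than merely weak); what the paper's route buys is that it never explicitly needs the \emph{true} martingale property of $M_T$ (equivalently $E_0 M_T=1$), only the Fatou inequality, though it does establish this property along the way when identifying $dQ_0/dP_0|_{{\cal F}_T}$. Your Poisson-domination argument for upgrading $M$ from local to true martingale is correct and is exactly the point that deserves the care you give it.
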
 
           In what follows we will denote the distribution of the process generated by $H_{\beta}^{\psi_{\beta}}$ by $Q$.  As before, $Q_x$ will denote the distribution of the process starting from $x$, and $E^Q_x$ will denote the expectation of the process starting from $x$.  Observe that there's a tight relation between $Q$ and $P_{\beta,t}$. 
           Indeed, if $h_1,\dots, h_k$ are continuous real-valued bounded functions on $\R$ an $0<t_1<\dots< t_n \le t$,  then from the definition of the transition kernel $q_\beta$, we have that 
           $$ \int \prod_{j=1}^n h_j (X_{t_j} ) d P_{\beta,t} = \frac{1}{\psi_\beta(0)}E^Q_0[ \prod_{j=1}^n h_j (X_{t_j}) \frac{1}{\psi_\beta (X_t)}].$$ 
           In particular,  (and since  $\psi_{\beta}(0)=1$ from its definition), 
           \begin{align} \label{eq:Q_pol}  d P_{\beta,t} &=  \frac{  \frac{1}{\psi_{\beta} (X_t)} d Q_0|_{{\cal F}_t}}{Z_{\beta,t}},\mbox{ and } \\
           \nonumber 
         Z_{\beta,t} &= E^Q_0 [ \frac{1}{\psi_{\beta}(X_t)}],
         \end{align} 
 
                             Let $\sigma_t = \sup\{s\le t:X(s)=0\}$ and 
            $N_t=\# \{s\le t : X(s)=0\mbox{ and } X(s^-)\ne 0\}$.  We have the following corollary to Theorem \ref{th:Zbeta} and Theorem \ref{th:small times}:
                \begin{corollary} 
              Let $\beta <0$. Then,
              \label{cor:lasttimes} 
         \begin{enumerate}
         \item $ P_{\beta,t} (\I(t) \in \cdot ) \Rightarrow   Exp (-\beta)$;
         \item $ P_{\beta,t} (\sigma_t\in \cdot) \Rightarrow -\beta p_{\beta}(y,0,0) dy$; and 
         \item $ P_{\beta,t}(N_t\in \cdot) \Rightarrow Geom(\frac{-\beta}{1-\beta})$. 
      \end{enumerate}
      \end{corollary}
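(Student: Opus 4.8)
\medskip
\noindent\textbf{Proof proposal.}
All three parts are convergence-in-distribution statements for a one-dimensional functional of $X$ under $P_{\beta,t}$, and in each case the plan is the same: compute the relevant Laplace (or probability generating) transform of that functional, show it converges as $t\to\infty$ to the transform of the claimed limit law, and conclude by a continuity theorem. The common input is Theorem~\ref{th:Zbeta}(i), which says exactly that $t\mapsto Z_{\beta,t}$ is regularly varying at $\infty$ (index $-\tfrac12$ when $d=1$; index $-1$ with a $1/\ln t$ slowly varying factor when $d=2$), so every step below that passes from ``the ratio of two Laplace transforms converges as $q\downarrow 0$'' to ``the ratio of the two functions converges as $t\to\infty$'' will be licensed by Karamata's Tauberian theorem, using that the functions in question are monotone in $t$. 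Part (i) is then immediate: for $\theta\ge0$,
$$E_{\beta,t}\big[e^{-\theta\I(t)}\big]=\frac{E_0\big[e^{(\beta-\theta)\I(t)}\big]}{Z_{\beta,t}}=\frac{Z_{\beta-\theta,t}}{Z_{\beta,t}},$$
and since $\beta-\theta<\beta<0$, Theorem~\ref{th:Zbeta}(i) applies to numerator and denominator and the slowly varying factors cancel, leaving the limit $\beta/(\beta-\theta)=(-\beta)/((-\beta)+\theta)$, the Laplace transform of $\mathrm{Exp}(-\beta)$; the continuity theorem for Laplace transforms finishes the argument.

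For part (iii) I would use a renewal decomposition of the path at the successive returns of $X$ to $0$. Each ``cycle'' consists of an $\mathrm{Exp}(1)$ holding time at $0$ (contributing the weight $e^{\beta\xi}$, advancing the clock, but not $N_t$) followed by an excursion away from $0$ (contributing no $\I$-mass, advancing the clock, and---because $d=1,2$---returning almost surely, advancing $N_t$ by one). Writing $G_q(x,y)=\int_0^\infty e^{-qt}P_x(X(t)=y)\,dt$ for the resolvent kernel of $\Delta$ and $\varphi(q)=\tfrac1{2d}\sum_{|y|=1}G_q(y,0)/G_q(0,0)$ for the Laplace transform of an excursion length, a routine Markov-renewal computation gives an identity of the form
$$\int_0^\infty e^{-qt}E_0\big[z^{N_t}e^{\beta\I(t)}\big]\,dt=\frac{\widehat\Psi(q)}{1-z\,\kappa(q)},\qquad \kappa(q)=\frac{\varphi(q)}{1-\beta+q},$$
with $\widehat\Psi(q)$ an explicit factor accounting for the possibly incomplete final cycle; specializing $z=1$ recovers $\int_0^\infty e^{-qt}Z_{\beta,t}\,dt=\widehat\Psi(q)/(1-\kappa(q))$. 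Recurrence gives $\varphi(0)=1$, hence $\kappa(q)\to 1/(1-\beta)$ as $q\downarrow0$, so the ratio of the two transforms tends to $(1-\kappa(0))/(1-z\kappa(0))=(-\beta)/(1-\beta-z)$, which is the probability generating function of $\mathrm{Geom}\!\big(\tfrac{-\beta}{1-\beta}\big)$. Since $z^{N_t}e^{\beta\I(t)}$ is pathwise nonincreasing in $t$ for $z\in[0,1]$, Karamata's theorem upgrades this to $E_{\beta,t}[z^{N_t}]=E_0[z^{N_t}e^{\beta\I(t)}]/Z_{\beta,t}\to(-\beta)/(1-\beta-z)$, and the continuity theorem for generating functions gives the claim. (One could instead read off this limit, and that of part (i), from the transient process $Q$ of Theorem~\ref{th:small times}, but controlling the reweighting by $1/(Z_{\beta,t}\psi_\beta(X_t))$ makes the direct computation cleaner.)

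Part (ii) is the delicate one. Fix $s>0$. On $\{\sigma_t\le s\}$ the path avoids $0$ on $(s,t]$, so $\I(t)=\I(s)$ there; conditioning on $\mathcal F_s$ and using the Markov property at time $s$ gives, for $t>s$,
$$P_{\beta,t}(\sigma_t\le s)=\frac{1}{Z_{\beta,t}}\,E_0\big[e^{\beta\I(s)}\,P_{X_s}(\tau_0>t-s)\big].$$
Taking the Laplace transform in $r=t-s$ and using $E_x[e^{-q\tau_0}]=G_q(x,0)/G_q(0,0)$, the transform of the numerator equals $\frac{1}{q\,G_q(0,0)}\,E_0\big[e^{\beta\I(s)}\big(G_q(0,0)-G_q(X_s,0)\big)\big]$. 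Now $G_q(0,0)-G_q(x,0)=\tfrac{1}{\pi^d}\int_{[0,\pi]^d}\frac{1-\cos\scp{\varphi,x}}{q+\Phi(\varphi)}\,d\varphi$ increases to $A_0(x)$ as $q\downarrow0$, so by monotone convergence (and finiteness of $E_0[e^{\beta\I(s)}A_0(X_s)]$, which holds since $A_0$ grows at most linearly) the transform is asymptotically $\frac{1}{q\,G_q(0,0)}E_0[e^{\beta\I(s)}A_0(X_s)]$; comparing with $\int_0^\infty e^{-qt}Z_{\beta,t}\,dt=\frac{1}{q(1-\beta G_q(0,0))}\sim\frac{-1/\beta}{q\,G_q(0,0)}$ and invoking Karamata's theorem (the relevant functions of $r$ are nonincreasing), together with $Z_{\beta,t}\sim Z_{\beta,t-s}$, yields
$$\lim_{t\to\infty}P_{\beta,t}(\sigma_t\le s)=-\beta\,E_0\big[e^{\beta\I(s)}A_0(X_s)\big].$$
It remains to identify the right side with $\int_0^s(-\beta)\,p_\beta(u,0,0)\,du$, where $p_\beta(u,0,0)=E_0[e^{\beta\I(u)}\delta_0(X(u))]$. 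From $\Delta e^{i\scp{\varphi,\cdot}}=-\Phi(\varphi)e^{i\scp{\varphi,\cdot}}$ and $\tfrac{1}{\pi^d}\int_{[0,\pi]^d}\cos\scp{\varphi,x}\,d\varphi=\delta_0(x)$ one gets $\Delta A_0=\delta_0$, and since $A_0(0)=0$ this gives $H_\beta A_0=\Delta A_0+\beta\delta_0(\cdot)A_0(\cdot)=\delta_0$. Hence by Dynkin's formula $\frac{d}{ds}E_0[e^{\beta\I(s)}A_0(X_s)]=E_0[e^{\beta\I(s)}(H_\beta A_0)(X_s)]=E_0[e^{\beta\I(s)}\delta_0(X_s)]=p_\beta(s,0,0)$, and as this vanishes at $s=0$ we conclude $E_0[e^{\beta\I(s)}A_0(X_s)]=\int_0^s p_\beta(u,0,0)\,du$. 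Since the limit law has a continuous density, pointwise convergence of the distribution functions gives weak convergence.

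Parts (i) and (iii) are essentially bookkeeping once the transform identities and Theorem~\ref{th:Zbeta}(i) are in hand; the work is concentrated in part (ii), where the main obstacles are justifying the interchange of the $q\downarrow0$ limit with the expectation (monotone convergence plus the mild growth of $A_0$), applying the Tauberian theorem correctly in both regimes $d=1$ and $d=2$, and the identification $H_\beta A_0=\delta_0$, which is what turns the abstract limit into the stated density.
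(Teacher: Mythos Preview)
Your proof is correct, and for parts (ii) and (iii) it takes a genuinely different route from the paper's. Part (i) is identical to the paper. For part (ii), the paper first proves the auxiliary limit $P_e(\tau_0>u)/Z_{\beta,u}\to -\beta$ directly from Theorem~\ref{th:Zbeta}, then obtains the result by a Portmanteau/Fatou argument over partitions of open sets; you instead compute the Laplace transform of $r\mapsto E_0[e^{\beta\I(s)}P_{X_s}(\tau_0>r)]$, invoke Karamata, and identify the limit via $H_\beta A_0=\delta_0$ (equivalently, via the martingale $e^{\beta\I(s)}\psi_\beta(X_s)$, which gives $E_0[e^{\beta\I(s)}A_0(X_s)]=\frac{1-Z_{\beta,s}}{-\beta}=\int_0^s p_\beta(u,0,0)\,du$ without differentiating). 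Your route yields pointwise convergence of the distribution function directly and avoids the partition argument. For part (iii), the paper leverages the already-proved Theorems~\ref{th:TR_theorem} and~\ref{th:small times}: since $P_{\beta,t}|_{\mathcal F_T}\Rightarrow Q_0$ and $X$ is transient under $Q_0$, $N_t$ converges to the total number of returns under $Q_0$, which is geometric; the parameter is then read off from $E^Q_0[\I(\infty)]=I^\beta(0)=-1/\beta$. Your renewal/Tauberian computation bypasses both theorems and uses only Theorem~\ref{th:Zbeta}(i) and the resolvent identities. The trade-off is clear: the paper's argument for (iii) is more conceptual and reuses structural results, while your approach is self-contained and more elementary, depending only on the partition function asymptotics and standard Tauberian theory. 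One small remark: your Dynkin step for $A_0$ requires a growth/integrability check since $A_0$ is unbounded, but this is immediate once you rewrite $A_0=(\psi_\beta-1)/(-\beta)$ and use that $e^{\beta\I(s)}\psi_\beta(X_s)$ is a true martingale.
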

      We comment that as is easy to verify, the limiting distributions above coincide with the respective distributions of $\lim_{t\to\infty} \I(t)$, $\lim_{t\to\infty} \sigma_t$ and $\lim_{t\to\infty} N_t$ under $Q_0$. 
          \subsection{Convergence of the scaled polymer} 
          \label{sec:scaling}
          In this section we will consider the behavior of the polymer when it is space- and time-scaled. To this end, let us introduce the scaled polymer. For $n\in\N$, 
          let $X^{(n)}$ denote the process defined by $X^{(n)}_t = X_{nt}/\sqrt{n},~t \in [0,1]$. This is the rescaled process. Our main goal is to obtain a functional central limit theorem for $X^{(n)}$.   As the polymer is a discrete analog of the penalized Brownian motion of \cite[Theorem 4.16, p. 251]{roy_yor}, it is not surprising that the scaling limits obtained coincide with those for the penalized Brownian motion. In fact, the only difficulty in the proof is in showing that the discrete process does converge to its continuous counterpart. What makes this convergence non trivial is the lack of stochastic analysis, scaling invariance, and the fact that the limit processes involve diffusion with singular coefficients, that is Bessel-3 process. We study the model through analysis of resolvents. We comment that the model is also amenable to analysis through the powerful renewal approach presented in  \cite{giacomin},  and more specifically in \cite{CGZ}.  
                      
          We will introduce some notation. The Brownian meander is defined as follows. Let $W=\{W_t:t\ge 0\}$ be standard $1$-dimensional Brownian motion, and let $L=\sup\{t\le 1:W_t=0\}$. Then for $t \in [0,1]$, let $M_t = \frac{ |W_{(1-t) L + t}|  }{\sqrt{1-L}}$. The resulting process $M=\{M_t:t\in [0,1]\}$ is called the Brownian meander. 
          Recall that the Bessel-3 process, which we denote by $R=\{R_t:t\ge 0\}$ is the Markov process on $[0,\infty)$ generated by 
           $$ \frac{1}{2} \frac{d^2}{dx^2} + \frac{1}{x} \frac{d}{dx},$$
           Unless otherwise specified, we will  assume that $R_0=0$. 
            If $W=\{W_t:t\ge 0\}$ standard  Brownian motion on $\R^3$, then $|W|=\{|W_t|:t\ge 0\}$ has the same distribution as $R$.
            
             The Brownian meander and the Bessel-3 process are related through the Imhof relations which state that for every  bounded continuous function $F:D[0,1]\to \R$, we have
          \begin{equation*} 
           E [ F (M)  ] = \sqrt{\frac{\pi}{2}} E [ F (R)  \frac{1}{R_1}],~\sqrt{\frac{2}{\pi}} E [ F (R) ] = E [ F(M) \frac{1}{M_1}].
          \end{equation*}
          In particular, it follows that 
          \begin{equation}
          \label{eq:imhof}
            E [ F (M)  ]  = \frac{E [ F (R)  \frac{1}{R_1}]}{E [\frac{1}{R_1}]}.
          \end{equation}  
          From the Imhof relation, and using the fact that $R_1$ has density $$\sqrt{\frac{2}{\pi}} \ch_{[0,\infty)} (x) x e^{-x^2/2},$$  we conclude that 
          \begin{equation} 
          \label{eq:imhof_little} 
          P (M_1 \in \cdot ) =  \ch_{[0,\infty)}(x) x e^{-x^2/2} dx .
          \end{equation}  
   
          In the results below, $J$ denotes a Bernoulli random variable with $P(J=1)=P(J=-1)=\frac 12$, independent of $R$ and $M$. 
 
          \begin{theorem}
          \label{th:scaling_limit} 
         Suppose $d=1$ and $\beta <0$.  Then 
         \begin{enumerate} 
         \item
         $\displaystyle Q_0( X^{(n)}\in \cdot ) \Rightarrow J R$, and 
       \item 
        $\displaystyle P_{\beta,n} (X^{(n)} \in \cdot ) \Rightarrow J M$
        \end{enumerate} 
         \end{theorem}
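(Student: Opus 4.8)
The plan is to prove (i) first, by resolvent analysis, and then to deduce (ii) from it via the change of measure \eqref{eq:Q_pol} and the Imhof relation \eqref{eq:imhof}. Throughout we use Theorem~\ref{th:psib_desc} together with the remark that $A_0(x)=|x|$ for $d=1$, so $\psi_\beta(x)=1+|\beta|\,|x|$. Under $Q$ the walk jumps from any $x\neq 0$ to $x\pm 1$ at rates $\tfrac12\psi_\beta(x\pm1)/\psi_\beta(x)$, which sum to exactly $1$ because $\psi_\beta$ is affine off the origin, and from $0$ to $\pm1$ at rate $\tfrac12(1+|\beta|)$; in particular the total jump rate is bounded by $1+|\beta|$ and jumps have size $O(1/\sqrt n)$, so tightness of $\{Q_0\circ(X^{(n)})^{-1}\}$ in $D[0,1]$ with continuous limit points — and likewise of $\{Q_0\circ(|X^{(n)}|)^{-1}\}$, which makes sense since $|X|$ is a Markov chain under $Q_0$ by the $x\mapsto -x$ symmetry of $H_\beta$ and $\psi_\beta$ — follows from standard moment estimates.

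For (i) I would first show $Q_0\circ(|X^{(n)}|)^{-1}\Rightarrow R$. After the time change $s=nt$, for $f\in C_c([0,\infty))$ the resolvent $E^Q_x\big[\int_0^\infty e^{-\mu t}f(|X_{nt}|/\sqrt n)\,dt\big]=\tfrac1n E^Q_x\big[\int_0^\infty e^{-(\mu/n)s}f(|X_s|/\sqrt n)\,ds\big]$ equals, by the $h$-transform representation of the resolvent and $H_\beta^{\psi_\beta}=\psi_\beta^{-1}H_\beta(\psi_\beta\,\cdot)$, the quantity $\tfrac1{n\psi_\beta(x)}\sum_y (\mu/n-H_\beta)^{-1}(x,y)\,\psi_\beta(y)\,f(|y|/\sqrt n)$. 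Since $H_\beta=\Delta+\beta\delta_0$ is a rank one perturbation of $\Delta$, the Sherman--Morrison formula reduces $(\mu/n-H_\beta)^{-1}$ to the lattice resolvent $G_\epsilon=(\epsilon-\Delta)^{-1}$; for instance $(\mu/n-H_\beta)^{-1}(0,y)=G_{\mu/n}(0,y)/(1-\beta G_{\mu/n}(0,0))$. Using the Fourier representation of $G_\epsilon$ in the diffusive regime $\epsilon=\mu/n$, $x=\xi\sqrt n$, $y=\eta\sqrt n$, one gets $G_{\mu/n}(\xi\sqrt n,\eta\sqrt n)=\sqrt n\,g_\mu(\xi,\eta)(1+o(1))$ uniformly on the relevant scale, where $g_\mu(\xi,\eta)=(2\mu)^{-1/2}e^{-\sqrt{2\mu}\,|\xi-\eta|}$ is the Green function of $\mu-\tfrac12\tfrac{d^2}{dx^2}$ on $\R$, together with $\psi_\beta(\eta\sqrt n)=|\beta|\sqrt n\,|\eta|(1+o(1))$ and $1-\beta G_{\mu/n}(0,0)=|\beta|\sqrt n\,g_\mu(0,0)(1+o(1))$. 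The factors of order $n^{3/2}$ in numerator and denominator cancel, and the Riemann sum, with tails controlled by the exponential decay of $g_\mu$, converges to $\int_0^\infty 2r\,e^{-\sqrt{2\mu}\,r}f(r)\,dr=\int_0^\infty\!\big(\int_0^\infty e^{-\mu t}\sqrt{2/(\pi t^3)}\,r^2e^{-r^2/2t}\,dt\big)f(r)\,dr$, which is exactly the resolvent of Bessel-$3$ started at $0$; an analogous computation from $x=\xi\sqrt n$ yields the Bessel-$3$ resolvent from $|\xi|$. Combined with tightness this gives $Q_0\circ(|X^{(n)}|)^{-1}\Rightarrow R$. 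To pass to $X^{(n)}$ itself, note that by Theorem~\ref{th:TR_theorem} the chain under $Q$ is transient for $\beta<0=\beta_{cr}$, so $Q_0$-a.s.\ $|X_t|\to\infty$, the last visit $\sigma_\infty:=\sup\{t:X_t=0\}$ is finite, and $X$ has an eventual sign $J$. Representing $X$ under $Q_0$ by attaching an i.i.d.\ fair sign sequence, independent of $|X|$, to the successive excursions of $|X|$ (legitimate because $Q_0$ is reflection invariant) shows that $J$ is a fair coin independent of $|X|$, and that the sign $J_n$ of the excursion following $\sigma_n:=\sup\{s\le n:X_s=0\}$ satisfies $J_n\perp|X^{(n)}|$ and $J_n\to J$ a.s.; since moreover $X^{(n)}_t=J_n|X^{(n)}_t|$ for $t>\sigma_n/n$ and $\sup_{t\le\sigma_n/n}|X^{(n)}_t|\le\sup_{s\le\sigma_\infty}|X_s|/\sqrt n\to 0$, we obtain $\|X^{(n)}-J_n|X^{(n)}|\|_\infty\to 0$ a.s., whence $Q_0\circ(X^{(n)})^{-1}\Rightarrow JR$.

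For (ii), \eqref{eq:Q_pol} gives $E_{P_{\beta,n}}[F(X^{(n)})]=E^Q_0[F(X^{(n)})\,w_n]$ for bounded continuous $F$ on $D[0,1]$, where $w_n=(\psi_\beta(X_n)Z_{\beta,n})^{-1}=\big((\tfrac1{\sqrt n}+|\beta|\,|X^{(n)}_1|)\,\sqrt n\,Z_{\beta,n}\big)^{-1}$. By Theorem~\ref{th:Zbeta}(i), $\sqrt n\,Z_{\beta,n}\to|\beta|^{-1}\sqrt{2/\pi}$, so part (i) and the continuous mapping theorem give $(X^{(n)},w_n)\Rightarrow(JR,\sqrt{\pi/2}\,R_1^{-1})$ jointly (using $R_1>0$ a.s.). Now $E^Q_0[w_n]=Z_{\beta,n}^{-1}E^Q_0[\psi_\beta(X_n)^{-1}]=1$ for every $n$ by \eqref{eq:Q_pol}, while $E[\sqrt{\pi/2}\,R_1^{-1}]=\sqrt{\pi/2}\cdot\sqrt{2/\pi}=1$ since $R_1$ has density $\sqrt{2/\pi}\,r^2e^{-r^2/2}$ on $(0,\infty)$; convergence in distribution of the nonnegative variables $w_n$ together with convergence of their means forces $\{w_n\}$ to be uniformly integrable, and hence $E^Q_0[F(X^{(n)})w_n]\to E[F(JR)\sqrt{\pi/2}\,R_1^{-1}]$. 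Applying \eqref{eq:imhof} to the functionals $\omega\mapsto F(\pm\omega)$ identifies this limit with $\tfrac12\big(E[F(M)]+E[F(-M)]\big)=E[F(JM)]$, which is (ii).

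The main obstacle is the resolvent convergence in (i): obtaining asymptotics for the lattice resolvent $(\mu/n-\Delta)^{-1}(x,y)$ on the scale $|x|,|y|\asymp\sqrt n$, $\mu/n\to 0$, that are sharp and uniform enough that, after the rank one renormalization, the competing factors of order $n^{3/2}$ cancel and the resulting sum converges to the Bessel-$3$ Green function; this is precisely where the potential-theoretic Fourier analysis behind Theorem~\ref{th:Zbeta} is needed, and where the lack of exact scaling invariance for the discrete model is felt. A secondary, more routine point is to phrase the excursion-sign decoupling rigorously in the Skorokhod topology and to invoke a standard Feller-type convergence theorem (tightness plus convergence of resolvents from a convergent family of initial states) to deduce weak convergence of the processes from convergence of their resolvents.
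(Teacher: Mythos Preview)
Your proposal is broadly correct but follows a genuinely different path from the paper. For part (i), the paper passes to the squared process $Y^{(n)}=(X^{(n)})^2$ and shows it approximately solves the martingale problem for $\mathcal{L}^{(\infty)}g(y)=2yg''(y)+3g'(y)$, the generator of squared Bessel-$3$; squaring is precisely the device that removes the singular drift $1/x$ of Bessel-$3$ at the origin, and convergence is then concluded via the well-posedness theorem of Ethier--Kurtz. You instead compute the resolvent of $|X^{(n)}|$ under $Q_0$ directly, using the rank-one perturbation formula \eqref{eq:prelres} together with the explicit asymptotics of the one-dimensional lattice Green function, and identify the limit as the Bessel-$3$ resolvent. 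Both routes are valid; yours exploits the exact solvability of $d=1$ and is more explicit, while the paper's martingale-problem approach is more robust (it adapts with little change to $d=2$, where no closed-form Green function is available). The excursion-sign argument you use to pass from $|X^{(n)}|$ to $X^{(n)}$ is essentially the same symmetry-plus-transience argument the paper uses in Section~3.7.3. For part (ii), the paper proves \eqref{eq:required_limit} by a Fatou sandwich (applying Fatou to $F-\inf F$ and to $\sup F-F$), whereas you observe that $w_n\ge 0$, $w_n\Rightarrow w_\infty$, and $E[w_n]=E[w_\infty]=1$ together force uniform integrability (Scheff\'e), which is arguably cleaner.

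There is one genuine gap: your tightness claim does not follow from ``bounded jump rate and $O(1/\sqrt n)$ jump size'' alone. The drift of $X^{(n)}$ at $X^{(n)}_u=x$ equals $\sqrt n\,|\beta|\operatorname{sgn}(x)/\psi_\beta(\sqrt n\,x)$, which is $O(1/|x|)$ away from zero but $O(\sqrt n)$ near the origin; neither second-moment bounds on the increments nor Poisson domination of the jump count are uniform in $n$. The paper handles this carefully: Proposition~\ref{pr:translation_bd}, proved via the coupling of Lemma~\ref{lem:coupling}, gives the uniform estimate $E^Q_{x_0}|X_t-x_0|\le\int_0^t P_0(X_s=0)\,ds+|\beta|\int_0^t Z_{\beta,s}\,ds$; since both integrands are $O(s^{-1/2})$, after diffusive rescaling one obtains $E^Q|X^{(n)}_t-X^{(n)}_s|\le c\sqrt{t-s}$ uniformly in $n$, and Aldous' criterion applies. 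You will need an estimate of this kind --- effectively a bound on the time the scaled process spends near zero --- to close the tightness step before your resolvent argument can be invoked.
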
 
         
          An immediate consequence of this theorem is that $ Q_0 ( \frac{X_n}{\sqrt{n}}  \in \cdot ) \Rightarrow JR_1$ and $ P_{\beta,n} ( \frac{X_n}{\sqrt{n}}  \in \cdot ) \Rightarrow JM_1$.
          
    We continue to a short informal discussion of the case $d=2$, explaining  why, in our opinion, it is less interesting. The bottom line is  the penalizing does not affect  the scaling limit all.  First, observe that from \eqref{eq:Q_pol}, 
      $$ \int F (X^{(n)}) d P_{\beta,t}  = \frac{ E^Q_0 [ F (X^{(n)}) \frac{1}{\psi_{\beta} (X_n) }]}{E^Q_0 [\frac{1}{\psi_{\beta} (X_n)}]}.$$ 
      It could be shown that $\psi_{\beta}$ grows logarithmically. Therefore, the righthand side is asymptotically equivalent to 
      $$  \frac{ E^Q_0 [ F (X^{(n)}) \frac{1}{\ln( \sqrt{n} |X^{(n)}_1|)}]} {E^Q_0 [ \frac{1}{\ln( \sqrt{n} |X^{(n)}_1|)}]} \sim E^Q_0 [ F(X^{(n)}) ],$$ 
      if under $Q_0$, $X^{(n)}_1=O(1)$.  The  logarithmic growth of $\psi_\beta$ and  the arguments of Section \ref{sec:martigale_prob} guarantee this is indeed the case, and, in  addition, that the generator of $X^{(n)}$ converges to $\frac{1}{2} \Delta$ on $\R^2$. The latter statement, along with a  tightness argument adapted from Section \ref{sec:tightness} to this setting, then imply that the law of $X^{(n)}$ under $Q_0$ converges to Brownian motion in two dimensions starting from the origin and it then follows that the polymer has the same limit. 
 \subsection{Relation to a Wetting Model} 
 \label{sec:wetting}
 In this short section we consider a modified version of the polymer known in the literature as a wetting model \cite{IsYosh}\cite[Sec 1.3]{giacomin}. Our goal is to show how  results on the wetting model follow from our results on the polymer. For $\beta' \in \R$, and $t\ge 0$, let $\tilde P_{\beta',t}$ be the polymer measure on ${\cal F}_t$ defined as 
  $$ \frac{ d \tilde P_{\beta',t}}{d P|_{{\cal F}_t}} =  \frac{ e^{\beta' \I (t)} \ch_{A_t}}{\tilde Z_{\beta',t}},$$
  where $A_t = \{\inf_{s\le t}  X_s\ge 0\}$, and $\tilde Z_{\beta,t}$ is a normalizing constant.  The following is an immediate consequence of our analysis of the polymer model. 
  \begin{theorem}
   $\tilde P_{\beta',n} (X^{(n)} \in \cdot ) \Rightarrow  \begin{cases} M & \beta' < \frac 12 \\ |W| & \beta' = \frac 12.\end{cases}$ 
  \end{theorem}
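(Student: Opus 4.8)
The plan is to exhibit the wetting measure as the image, under the folding map $x\mapsto|x|$, of a homopolymer built over a random walk that differs from $X$ only through its jump rate at the origin, and then to feed this into the scaling limits already established for the homopolymer.

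\emph{Step 1: the folding identity.} Work under $P_0$ and split the trajectory of $X$ into its successive excursions away from $0$, each of which carries an independent fair sign that is conditionally independent of everything else (the excursion shapes and durations, and in particular $\I(t)$, $N_t$, and the folded path $|X|$). Writing $m_t:=N_t+\ch_{\{X_t\ne0\}}$ for the number of excursions the path has entered by time $t$, conditioning on the excursion shapes replaces $\ch_{A_t}$ by $2^{-m_t}$, so that for bounded measurable $f$,
\begin{equation*}
\tilde Z_{\beta',t}=E_0[e^{\beta'\I(t)}2^{-m_t}],\qquad \int f(X)\,d\tilde P_{\beta',t}=\frac{E_0[e^{\beta'\I(t)}2^{-m_t}f(|X|)]}{\tilde Z_{\beta',t}},
\end{equation*}
where on $A_t$ we have $X=|X|$. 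Now let $Y$ be the symmetric nearest-neighbor walk on $\Z$ agreeing with $X$ except that its holding time at $0$ is $\mathrm{Exp}(\tfrac12)$, and let $P^Y_0$ be its law started at $0$; comparing $\mathrm{Exp}(\tfrac12)$ against $\mathrm{Exp}(1)$ holding times (the excursions of $X$ and $Y$ have identical law) yields $dP^Y_0/dP_0=2^{-m_t}e^{\I(t)/2}$ on ${\cal F}_t$. Taking $\beta:=\beta'-\tfrac12$ turns the two displayed identities into $\tilde Z_{\beta',t}=E^Y_0[e^{\beta\I(t)}]$ and, for the corresponding homopolymer $P^Y_{\beta,t}$ over $Y$,
\begin{equation*}
\tilde P_{\beta',t}(X\in\cdot)=P^Y_{\beta,t}(|X|\in\cdot),\qquad\beta=\beta'-\tfrac12 .
\end{equation*}

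\emph{Step 2: passing to the limit.} Folding commutes with the diffusive rescaling, $|X^{(n)}|=(|X|)^{(n)}$, and $w\mapsto|w|$ is continuous on $D[0,1]$, so by the continuous mapping theorem it suffices to identify the scaling limit of $X^{(n)}$ under $P^Y_{\beta,n}$. When $\beta'<\tfrac12$ we have $\beta<0=\beta_{cr}(Y)$ (recall $Y$ is recurrent), and the analogue of Theorem~\ref{th:scaling_limit}(ii) for $Y$ gives $P^Y_{\beta,n}(X^{(n)}\in\cdot)\Rightarrow JM$, hence $P^Y_{\beta,n}(|X^{(n)}|\in\cdot)\Rightarrow|JM|=M$. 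When $\beta'=\tfrac12$ we have $\beta=0$, so $P^Y_{0,t}=P^Y_0$ is just the law of $Y$; the invariance principle for $Y$ gives $Y^{(n)}\Rightarrow W$, and therefore $|Y^{(n)}|\Rightarrow|W|$. Together with Step~1 this proves the theorem.

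The crux is the claim, used in Step~2, that the homopolymer scaling limits are unaffected by the single-site change of jump rate separating $Y$ from $X$; I expect this to be the only real obstacle. It should follow by checking that the arguments of Section~\ref{sec:proofs} go through for $Y$ essentially unchanged, since $Y$ is a bounded perturbation of $X$, is reversible with respect to a measure comparable to counting measure, shares its critical value $\beta_{cr}=0$ with $X$, and the resolvent expansions entering the proofs have the same leading-order behavior (the single-site correction being of lower order). Alternatively, when $\beta'<0$ one can avoid reproving anything by writing $dP^Y_{\beta,t}/dP_{\beta',t}$ as the bounded reweighting $2^{-m_t}/E_{\beta',t}[2^{-m_t}]$ of the rate-one subcritical polymer and invoking joint convergence of $(X^{(n)},N_n)$ under $P_{\beta',n}$ with the macroscopic excursion asymptotically independent of the a.s.\ finite number of returns to $0$; but this shortcut only reaches $\beta'\in(-\infty,0)$, so it is the robustness of the Section~\ref{sec:proofs} arguments that delivers the full range $\beta'\le\tfrac12$.
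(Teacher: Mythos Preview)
Your folding identity in Step~1 is correct and elegant: the excursion--sign argument together with the rate change $1\to\tfrac12$ at the origin does yield $\tilde P_{\beta',t}(X\in\cdot)=P^Y_{\beta,t}(|X|\in\cdot)$ with $\beta=\beta'-\tfrac12$. But the route is genuinely different from the paper's, and it leaves a real gap: for $\beta'<\tfrac12$ you need Theorem~\ref{th:scaling_limit}(ii) for the modified walk $Y$, not for $X$, and you only sketch why this ``should'' go through. While the perturbation is mild, the proof of Theorem~\ref{th:scaling_limit} uses the explicit form of $\psi_\beta$, the asymptotics of $Z_{\beta,t}$ and the resolvent estimates repeatedly (Proposition~\ref{pr:translation_bd}, the martingale--problem bounds of Section~\ref{sec:martigale_prob}, the identification of $\sqrt{n}Z_{\beta,n}$ with $\sqrt{2/\pi}/|\beta|$, etc.), so ``essentially unchanged'' is a promissory note rather than a proof.

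The paper avoids this entirely by working on $\Z_+$ from the start. It computes the generator $\tilde H_{\beta'}$ of the sub-Markovian semigroup $\tilde p(t,x,y)=E_x[e^{\beta'\I(t)}\ch_{A_t}\delta_y(X_t)]$, finds its positive harmonic function $\phi(x)=1+(1-2\beta')x$, and recognizes $\phi$ as the restriction to $\Z_+$ of $\psi_\beta$ for $\beta=2\beta'-1$. The $h$-transform $(\tilde H_{\beta'})^{\phi}$ is then identified with the generator of $|X|$ under the \emph{original} $Q$, and together with \eqref{eq:Q_pol} this yields the exact identity $\tilde P_{\beta',n}(X^{(n)}\in\cdot)=P_{\beta,n}(|X^{(n)}|\in\cdot)$ for the original walk $X$; Theorem~\ref{th:scaling_limit}(ii) then applies directly with no extra work. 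The case $\beta'=\tfrac12$ is even simpler: $\tilde H_{1/2}$ is itself Markovian (reflected walk), and the ordinary invariance principle gives $|W|$. Thus the paper never changes the underlying walk and nothing has to be re-proved. What your approach buys is a transparent probabilistic explanation of \emph{why} the wetting model is a folded homopolymer; what it costs is having to redo Section~\ref{sec:proofs} for $Y$.
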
 
 We comment that the  discrete-time version of the theorem is \cite[Th. 1.2]{IsYosh}. \\
  To prove the theorem (and also understand the case $\beta' > \frac 12$), consider the transition kernel 
  $$\tilde p(t,x,y) = E_x [e^{\beta' \I (t)} \ch_{A_t}\delta_y(X_t)],~t\ge 0, x,y \in \Z.$$
  This defines a  semigroup on $\Z_+$ whose generator $\tilde H_{\beta'}$ is the restriction of $H_{\beta'}$ to functions vanishing on $\{-1,-2,\dots\}$. 
   Observe  that 
  $$ (\tilde H_{\beta'} f)(x)  =\begin{cases}  \frac 12 f (1) +(\beta' - 1) f(0) & x=0 \\ \Delta f (x) & x>0.\end{cases}$$
  In particular, $\tilde H_{\beta'}$ is the generator of a Markov process if and only if $\beta' =\frac 12$, and in this case it is the generator of random walk reflected at the origin. It follows that $\tilde P_{\frac 12,n}(X^{(n)} \in \cdot) \Rightarrow |W|$ on $D[0,1]$ as $n\to\infty$. For $\beta' > \frac 12$, the analysis is identical to the supercritical phase for the hompolymer, that is, the principal eigenvalue for $\tilde H_{\beta'}$ is an isolated eigenvalue with an eigenfunction in $\ell^2 (\Z_+)$, and the corresponding results hold verbatim. As for $\beta' < \frac 12$, if $\phi$ is a positive harmonic function for $\tilde H_{\beta'}$, then without loss of generality $\phi(0)=1$ and as a result $\phi(1) =2(1-\beta')=1+ (1-2\beta')$. Since $\tilde H_{\beta'}$ coincides with $\Delta$ on $\N$, this implies that for $x\in\Z_+$, $\phi(x) = 1+ (1-2\beta') x$, that is,  $\phi$ coincides with the restriction of $\psi_{2\beta'-1}$ to $\Z_+$, where $\psi_{\cdot}$ is the function  from Theorem \ref{th:psib_desc}-(ii) for $d=1$. Hence the Markov process generated by the $h$-transformed  $(\tilde H_{\beta'})^{\phi}$ coincides $|X|$ under $Q$ with parameter $\beta=  2\beta'-1$. In particular, for any bounded and  continuous $F:D[0,1]\to \R$, 
  $$ E [ F(X^{(n)} )e^{\beta' \I (n) } \ch_{A_n} ] = E^{Q}_0 [ \frac{ F(|X^{(n)}|)}{\psi_{\beta}(|X_n|)}].$$
It follows from \eqref{eq:Q_pol}
$$ \tilde P_{\beta',n} (X^{(n)}  \in \cdot) = P_{\beta,n} (|X^{(n)}| \in \cdot).$$ 
and from  Theorem \ref{th:scaling_limit}-(ii) we have that $\tilde P_{\beta',n} (X^{(n)} \in \cdot ) \Rightarrow M$.  
\section{Proofs}
\label{sec:proofs}
\subsection{Preliminaries} 
 By the Feynman-Kac formula, $H_\beta$ generates a semigroup whose transition function $p_\beta(t,x,y)$ is given by 
            \begin{equation}
            \label{eq:FC} 
            p_{\beta}(t,x,y)=E_x  \left [ e^{\beta \I(t)}\delta_y(X(t))\right].
            \end{equation}
            Recall that $\Sigma_\beta$ is  the spectrum of $H_\beta= \Delta + \beta \delta_0(x)$ on $\ell^2(\Z^d)$.  
                   For $\lambda \not \in \Sigma_\beta$, we define the resolvent $R_\lambda^\beta$ as $R_{\lambda}^{\beta}=(\lambda-H_\beta)^{-1}$. Then    $$ R_{\lambda}^{\beta} f (x) = \int_0^{\infty} e^{-\lambda t} \sum_y p_{\beta}(t,x,y)f(y)dt=\int_0^{\infty} e^{-\lambda t} E_x \left [f(X(t)) e^{\beta \I(t)}\right ]dt.$$
Abusing notation, we write $R_{\lambda}^{\beta}(x,y)$ for $R_{\lambda}^{\beta}\delta_y(x)$. 
  We also write $R_\lambda$ for $R_\lambda^0$. 
   Due to the special role of $R_{\lambda}^{\beta}(0,0)$ we denote it by $I^\beta(\lambda)$, and write $I$ for $I^0$.  The resolvent $R_\lambda^\beta$ can be obtained directly from $R_\lambda$ through the resolvent equation, as we now show. Suppose that  $\lambda \not \in \Sigma_0\cup \Sigma_\beta$. Then 
  $(\lambda- H) R_\lambda^\beta(x,y) = \delta_y(x)+\beta R_\lambda^\beta  (0,y)\delta_0(x)$. 
   Hence
    $$R_\lambda^\beta (x,y) = R_\lambda(x,y)+\beta R_\lambda^\beta (0,y) R_\lambda(x,0),$$
     and so by letting $x=0$, we obtain 
     \begin{equation}
     \label{eq:prelres} 
      R_\lambda^\beta(0,y) = \frac{R_\lambda(0,y)}{1-\beta I(\lambda)},
      \end{equation}
       which gives 
      \begin{equation} 
      \label{eq:resolventeqn}
      R_\lambda^\beta(x,y)=R_\lambda(x,y)+\frac{\beta R_\lambda(x,0)R_\lambda(0,y)}{1-\beta I(\lambda)}.
      \end{equation}
       In particular, 
      \begin{equation}
      \label{eq:Ilambda} 
       I^\beta (\lambda)=\frac{I(\lambda)}{1-\beta I(\lambda) }= \frac{-\beta^{-1}}{1- \frac{1}{\beta I(\lambda)}},
        \end{equation} 
By \eqref{eq:FC} 
 $Z_{\beta,t}(x) = p_{\beta}(t,x,{\bf 1})=\sum_{y\in \Z^d} p_\beta(t,x,y)$. Also,  
             $$\frac{d Z_{\beta,t}}{dt} = \beta E_0  [e^{\beta\I(t)}\delta_0(X(t))] = \beta p_\beta(t,0,0).$$
          Thus, 
           $$ Z_{\beta,t} = 1+ \beta \int_0^t p_\beta(s,0,0)ds.$$
      Assume that  $d=1,2$ and $\beta<0$. Then $\lim_{t\to\infty} Z_{\beta,t}=0$, and therefore 
                      \begin{equation}
             \label{eq:Zbeta}
              Z_{\beta,t} = -\beta \int_t^{\infty} p_\beta(s,0,0) ds. 
              \end{equation} 
   We now  derive an integral  representation for $R_\lambda$ through Fourier transforms. By reversibility and spatial homogeneity, $R_\lambda(x,y)=R_\lambda(y,x)=R_\lambda(x-y,0)$.   The Fourier series  of  $R_\lambda(\cdot,0)$,  denoted by   $\hat R_{\lambda}$, is defined through
   $$\hat R_{\lambda}(\varphi) =\sum_{x\in \Z^d } R_{\lambda}(x,0)e^{i\scp{\varphi,x}},~\varphi \in [0,2\pi]^d.$$ 
   Since $\{\frac{e^{i \scp{\varphi,x}}}{(2\pi)^{d/2}}:x \in \Z^d\}$ form an orthonormal basis for $L^2([0,2\pi]^d)$, the inversion formula is given by 
         $$ R_{\lambda}(x,0)=\frac{1}{(2\pi)^d}\int_{[0,2\pi]^d} \hat R_{\lambda}(\varphi) e^{-i\scp{\varphi,x}}d\varphi.$$
  Taking the Fourier series of both sides of  $(\lambda-\Delta ) R_{\lambda}(x,0)=\delta_0(x)$,  it follows that $(\lambda +\Phi(\varphi))\hat R_{\lambda}(\varphi)=1$, where $\Phi(\varphi)=\frac {1}{d}\sum_{j=1}^d (1-\cos \varphi_j)$ is the symbol of $-\Delta$. Therefore
   \begin{align}
  \nonumber
   R_\lambda(x,y)&= \frac{1}{(2\pi)^d}\int_{[0,2\pi]^d}\frac{e^{-i\scp{\varphi,x-y}} }{\lambda+\Phi(\varphi)}d\varphi\\
   \label{eq:RlambdaFourier}
  &=  \frac{1}{\pi^d}\int_{[0,\pi]^d}\frac{\cos (\scp{\varphi,x-y}) }{\lambda+\Phi(\varphi)}d\varphi.
  \end{align}
   The second equality is due to the facts that the integrand is symmetric about $\pi$ with respect to each of the variables and that $R_\lambda$ is real for real $\lambda$. Setting $x=y=0$ in \eqref{eq:RlambdaFourier}, we obtain 
     $$I(\lambda) = \frac {1}{\pi^d} \int_{[0,\pi]^d}\frac{1}{\lambda+\Phi(\varphi)}d\varphi, $$
    and an estimate of this integral leads to the following known lemma. The proof is given in the Appendix. 
         \begin{lemma}
 \label{lem:Ilambda}
 $$ I(\lambda) = \begin{cases}  \frac{1}{\sqrt{\lambda(2+\lambda)}}& d=1;\\ 
    \left(- \frac{\ln |\lambda|}{\pi} +O_{\mbox{real}}(1)\right)-i (1+ o_{\mbox{real}}(1))  \mbox { as } \lambda \to 0 & d=2,\end{cases}$$
    Where the subscript $\mbox{real}$ means that the function is real-valued.  
\end{lemma}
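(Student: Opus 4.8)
The starting point is the Fourier representation
$$ I(\lambda) = \frac{1}{\pi^d} \int_{[0,\pi]^d} \frac{1}{\lambda + \Phi(\varphi)}\, d\varphi, $$
valid for $\lambda$ outside $[-2,0]$, and we want its behavior as $\lambda \to 0$ (with $\lambda$ approaching from the relevant half-plane, since $0 \in \Sigma_0$ when $d=1,2$). I would treat the two dimensions separately. For $d=1$, the integral is elementary: $\Phi(\varphi) = 1 - \cos\varphi$, so
$$ I(\lambda) = \frac{1}{\pi}\int_0^\pi \frac{d\varphi}{\lambda + 1 - \cos\varphi}. $$
This is a standard trigonometric integral; the substitution $u = \tan(\varphi/2)$ (or looking it up) gives $\int_0^\pi (a - \cos\varphi)^{-1} d\varphi = \pi/\sqrt{a^2 - 1}$ for $a > 1$, and with $a = 1 + \lambda$ one gets $I(\lambda) = 1/\sqrt{(1+\lambda)^2 - 1} = 1/\sqrt{\lambda(2+\lambda)}$, which is the claimed closed form (and then extends by analytic continuation to complex $\lambda$ off the cut). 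No asymptotics are even needed here — the formula is exact.

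For $d=2$ the plan is genuinely asymptotic. Here $\Phi(\varphi) = \frac12(1-\cos\varphi_1) + \frac12(1-\cos\varphi_2)$ vanishes only at the origin, where $\Phi(\varphi) \sim \frac14|\varphi|^2$. The logarithmic divergence of $I(\lambda)$ as $\lambda \to 0$ comes entirely from a neighborhood of $\varphi = 0$. I would split the domain into $\{|\varphi| \le \epsilon\}$ and its complement. On the complement, $\Phi$ is bounded below, so $\int (\lambda + \Phi)^{-1}$ converges to a finite real limit as $\lambda \to 0$ along the reals and contributes only to the $O_{\mathrm{real}}(1)$ term (and has a controlled imaginary part). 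On the small ball, replace $\Phi(\varphi)$ by $\frac14|\varphi|^2$ up to a multiplicative error $1 + O(|\varphi|^2)$, pass to polar coordinates, and evaluate $\int_0^\epsilon \frac{r\,dr}{\lambda + r^2/4} = 2\ln(\lambda + \epsilon^2/4) - 2\ln\lambda$; the $-2\ln\lambda$ piece, after the $1/\pi^2 \cdot (\text{angular factor } \pi/2)$ normalization appropriate to the quarter-cube $[0,\pi]^2$, produces the leading term $-\frac{1}{\pi}\ln|\lambda|$, while the error terms are $O_{\mathrm{real}}(1)$. The imaginary part requires separate bookkeeping: for $\lambda = a + ib$ with $b \downarrow 0$ (or using the Stieltjes/boundary-value structure of the resolvent), $\im \frac{1}{\lambda + \Phi} \to -\pi\,\delta(\Phi)$ type behavior, and integrating the density of $\Phi$ near $0$ — which is asymptotically constant because $\Phi$ is quadratic in two dimensions — yields $\im I(\lambda) \to -1$, hence the $-i(1 + o_{\mathrm{real}}(1))$ term. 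Concretely one can write $\im I(\lambda) = -\frac{1}{\pi^2}\int_{[0,\pi]^2} \frac{b}{(a+\Phi)^2 + b^2}\,d\varphi$ and take $b \to 0$ using the approximate identity property of the Poisson-like kernel together with the fact that the level set $\{\Phi = 0\}$ is a single point with $\Phi$ nondegenerate there, which pins the constant to $1$.

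The main obstacle is the $d=2$ imaginary part: getting the constant exactly equal to $1$ (rather than just "bounded" or "$O(1)$") requires care about how $\lambda$ approaches the spectrum and about the precise normalization of the local density of $\Phi$ at the origin. The real part's logarithmic leading coefficient is comparatively routine once the domain is split, since it is insensitive to everything except the $\frac14|\varphi|^2$ local behavior and the measure of the angular sector. Everything else is bounded-variation estimates. Since the statement flags this as a known lemma with the proof deferred to the Appendix, I would present the $d=1$ computation in full and, for $d=2$, organize the argument around the near-origin/away-from-origin split with the boundary-value interpretation of $I(\lambda)$ made explicit at the outset.
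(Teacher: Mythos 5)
Your plan is correct and arrives at the same conclusions, but the execution differs from the paper's in two places. For $d=1$ you invoke the Weierstrass/$\tan(\varphi/2)$ substitution (or a table), whereas the paper evaluates the integral by converting to a contour integral in $z=e^{i\varphi}$ and taking a residue; both are elementary and both give the exact closed form $1/\sqrt{\lambda(2+\lambda)}$, so the choice is a matter of taste. For $d=2$ the more substantive difference is in how you linearize $\Phi$ near the origin. You replace $\Phi(\varphi)$ by its Hessian approximation $\tfrac14|\varphi|^2$ and push the multiplicative error $1+O(|\varphi|^2)$ through the resolvent kernel, which works but forces you to estimate the error both in the density $d\varphi$ and inside the denominator $\lambda+\Phi$; the paper instead makes the exact substitution $x_j=\sin(\varphi_j/2)$, under which $\Phi$ becomes literally $x_1^2+x_2^2$ and all the non-quadratic behavior is absorbed into the Jacobian weight $\prod_j(1-x_j^2)^{-1/2}$, which is smooth and bounded near the origin. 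That change of variables trades the approximation-in-the-denominator bookkeeping for a clean weight in the numerator and is a bit more robust, particularly for pinning down the imaginary part exactly. The paper then extracts the $-\frac{1}{\pi}\ln\frac{1}{\lambda}$ term by integration by parts in $r$, whereas you evaluate $\int_0^\epsilon r\,dr/(\lambda+r^2/4)$ directly; both yield the same leading coefficient after multiplying by the angular factor $\pi/2$ and the overall $\pi^{-2}$. Your imaginary-part argument (Poisson-kernel approximate identity concentrated on the level set of $\Phi$, together with the local density $\pi$ of the pushforward of Lebesgue measure under $\Phi$ at the origin) is the same idea the paper uses after its substitution $u=r^2-s$, and you correctly flag that getting the constant to equal $1$ rather than just $O(1)$ requires keeping track of the direction of approach to the cut (the paper takes $\lambda=-s+i\epsilon$ with $\epsilon\downarrow 0$); your heuristic does give the right constant, but to make it rigorous you would need to carry out the uniform-integrability step that the paper handles via the explicit bounded weight $1+h(\sqrt{s+u},\theta)$.
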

The next result is obtained by inverting the Laplace transforms in Lemma \ref{lem:Ilambda}.
\begin{prop}
\label{pr:pbeta00}
\begin{equation*}
\label{eq:pbeta} p_{\beta}(t,0,0) \sim \begin{cases}  \frac{1}{\sqrt{2\pi} \beta^2 t^{3/2}} & d=1;\\
          \frac{\pi}{\beta^2 t (\ln t)^2} & d=2. \end{cases}
 \end{equation*}
\end{prop}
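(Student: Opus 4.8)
The starting point is that $I^\beta(\lambda)=\int_0^\infty e^{-\lambda t}p_\beta(t,0,0)\,dt$ is the Laplace transform of $p_\beta(\cdot,0,0)$, so the plan is to read off the behavior of $I^\beta$ near $\lambda=0$ from \eqref{eq:Ilambda} and Lemma~\ref{lem:Ilambda}, and then convert it into large‑$t$ behavior of $p_\beta(t,0,0)$ by a Tauberian argument. The structural input that makes such an inversion legitimate is regularity of $p_\beta(\cdot,0,0)$: writing $p_\beta(t,0,0)=\langle\delta_0,e^{tH_\beta}\delta_0\rangle$ and using that $\max\Sigma_\beta=\lambda(\beta)=0$ (by Theorem~CM-I, since $\beta_{cr}=0$ when $d=1,2$), the spectral measure $\nu(\cdot)=\langle\delta_0,E_\beta(\cdot)\delta_0\rangle$ is supported in $(-\infty,0]$; hence $p_\beta(t,0,0)=\int e^{t\lambda}\,\nu(d\lambda)$ is completely monotone in $t$, in particular positive and nonincreasing. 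Also, \eqref{eq:Ilambda} together with the blow‑up of $I(\lambda)$ in Lemma~\ref{lem:Ilambda} gives $I^\beta(\lambda)\to-1/\beta$ as $\lambda\downarrow0$, so that $\int_0^\infty p_\beta(t,0,0)\,dt=-1/\beta$ (which is \eqref{eq:Zbeta} once more).

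Substituting Lemma~\ref{lem:Ilambda} into \eqref{eq:Ilambda} and expanding near $\lambda=0$ gives, for $d=1$,
$$
I^\beta(\lambda)=\frac{1}{\sqrt{\lambda(2+\lambda)}-\beta}=-\frac1\beta-\frac{\sqrt2}{\beta^2}\,\lambda^{1/2}+O(\lambda),
$$
and, using the real part $I(\lambda)=-\pi^{-1}\ln(1/\lambda)+O(1)$, for $d=2$,
$$
I^\beta(\lambda)=-\frac1\beta-\frac{\pi}{\beta^2\ln(1/\lambda)}\bigl(1+o(1)\bigr).
$$
Set $g(t)=\int_t^\infty p_\beta(s,0,0)\,ds$, which is nonnegative, nonincreasing, and tends to $0$; an integration by parts gives $\lambda\,\widehat g(\lambda)=-\beta^{-1}-I^\beta(\lambda)$, hence $\widehat g(\lambda)\sim\tfrac{\sqrt2}{\beta^2}\lambda^{-1/2}$ for $d=1$ and $\widehat g(\lambda)\sim\tfrac{\pi}{\beta^2}\bigl(\lambda\ln(1/\lambda)\bigr)^{-1}$ for $d=2$.

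Karamata's Tauberian theorem (applicable since $g$ is monotone) now yields $g(t)\sim\tfrac{\sqrt2}{\sqrt\pi\,\beta^2}\,t^{-1/2}$ for $d=1$ and $g(t)\sim\tfrac{\pi}{\beta^2\ln t}$ for $d=2$; via $Z_{\beta,t}=-\beta g(t)$ this already recovers the first statement of Theorem~\ref{th:Zbeta}. For $d=1$ I would then differentiate: the monotone density theorem, legitimate because $-g'=p_\beta(\cdot,0,0)$ is monotone, turns $g(t)\sim\tfrac{\sqrt2}{\sqrt\pi\,\beta^2}t^{-1/2}$ into $p_\beta(t,0,0)\sim\tfrac12\cdot\tfrac{\sqrt2}{\sqrt\pi\,\beta^2}\,t^{-3/2}=\tfrac1{\sqrt{2\pi}\,\beta^2}\,t^{-3/2}$, as claimed.

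The case $d=2$ is the main obstacle: $g$ is only slowly varying, so the plain monotone density theorem yields merely $p_\beta(t,0,0)=o\bigl(1/(t\ln t)\bigr)$. Three remedies are available. (i) Invoke a de~Haan--type refinement (the ``$\Pi$‑variation'' monotone density/Tauberian theorem), with the complete monotonicity of $g$ supplying the regularity it needs. (ii) Differentiate at the transform level: from $\tfrac{d}{d\lambda}I^\beta=I'/(1-\beta I)^2$ and an estimate $I'(\lambda)\sim-1/(\pi\lambda)$ of the same type as Lemma~\ref{lem:Ilambda}, one gets $\widehat{t\,p_\beta(\cdot,0,0)}(\lambda)=-\tfrac{d}{d\lambda}I^\beta(\lambda)\sim\tfrac{\pi}{\beta^2\lambda(\ln\lambda)^2}$, whence (Karamata) $\int_0^t s\,p_\beta(s,0,0)\,ds\sim\tfrac{\pi t}{\beta^2(\ln t)^2}$, and a monotone density argument applied to $t\,p_\beta(t,0,0)$ (after checking it is eventually monotone) gives the claim. (iii) Bypass Tauberian theorems and invert the Laplace transform directly: write $p_\beta(t,0,0)$ as a Bromwich integral and collapse the contour onto $[-2,0]$; the eigenvalue of $H_\beta$ below $-2$ contributes only $O(e^{-2t})$, the jump of $I^\beta$ across the cut near $0$ is $\sim -2\pi^2 i\,\beta^{-2}(\ln|\lambda|)^{-2}$ (this is where the imaginary part of $I$ from Lemma~\ref{lem:Ilambda} enters), and the substitution $\lambda=-s/t$ with dominated convergence extracts $p_\beta(t,0,0)\sim\tfrac{\pi}{\beta^2 t(\ln t)^2}$; the same contour argument handles $d=1$ uniformly, and is presumably what ``inverting the Laplace transforms'' refers to.
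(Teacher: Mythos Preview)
Your argument is correct for $d=1$ and largely correct for $d=2$, but the route is genuinely different from the paper's, and your option (iii) is in fact exactly what the paper does.

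The paper does not go through Tauberian theorems at all. It uses the spectral representation
$$p_\beta(t,0,0)=\int_{\Sigma_\beta}e^{ts}\,d\mu_\beta(s),\qquad I^\beta(\lambda)=\int_{\Sigma_\beta}\frac{d\mu_\beta(s)}{\lambda-s},$$
and recovers the density of $\mu_\beta$ near $0$ by Stieltjes inversion, $h_\beta(s)=-\tfrac1\pi\lim_{\epsilon\downarrow0}\Im I^\beta(s+i\epsilon)$. This is why Lemma~\ref{lem:Ilambda} is stated for complex $\lambda$ and keeps track of the imaginary part: plugging it into \eqref{eq:Ilambda} gives $h_\beta(s)\sim\frac{\sqrt{2|s|}}{\pi\beta^2}$ for $d=1$ and $h_\beta(s)\sim\frac{\pi}{\beta^2(\ln|s|)^2}$ for $d=2$; the asymptotics for $p_\beta(t,0,0)$ then drop out of $\int_0^\delta e^{-tu}h_\beta(-u)\,du$ by the elementary substitution $v=tu$, with no appeal to monotone density or $\Pi$--variation. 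Your option (iii), collapsing the Bromwich contour onto the cut, is exactly this computation in disguise (the jump across the cut is $-2\pi i\,h_\beta$).

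Your Tauberian route is a perfectly good alternative for $d=1$: complete monotonicity gives the monotonicity needed for Karamata and for two applications of the monotone density theorem, and the arithmetic checks. For $d=2$ you correctly flag the obstruction; of your three fixes, (iii) is the paper's argument, while (i) and (ii) each carry an unresolved hypothesis: the de~Haan theorem typically needs more than complete monotonicity of $g$, and the eventual monotonicity of $t\,p_\beta(t,0,0)$ is not obvious a priori (the leading order of its derivative vanishes, so one would need the next term). A minor slip: for real $\lambda\downarrow0$ one has $I(\lambda)=\pi^{-1}\ln(1/\lambda)+O(1)$, not $-\pi^{-1}\ln(1/\lambda)$; your formula for $I^\beta$ is nevertheless correct, so this is only a typo.
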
 
\begin{proof} 
By the Spectral Theorem, there exists a probability measure $\mu_\beta$ on  $\Sigma_\beta$, such that 
        \begin{equation} \label{eq:spectral_lambda}  I^\beta (\lambda)=\int_{\Sigma_\beta} \frac{d\mu_\beta(s)}{\lambda-s}.\end{equation} 
        and 
        $$ p_\beta(t,0,0) = \int_{\Sigma_\beta} e^{ts} d\mu_\beta(s).$$ 
        Since $p_\beta(t,0,0)\to 0$ as $t\to\infty$ and $\Sigma_\beta \subset (-\infty,0]$, it follows that $\mu_\beta (\{0\}) =0$. In addition, from \eqref{eq:spectral_lambda}, we observe that for $\lambda = s_0+ i\epsilon$ with $\epsilon>0$, $\lambda -s = (s-s_0)+ i\epsilon$, therefore $\Im(1/ (\lambda-s)) = -\frac{\epsilon}{(s-s_0)^2+\epsilon^2}$, and so, $-\frac 1\pi \Im (1/(\lambda -s))$ is an approximation of the identity, and it follows that $h_\beta$, the density of the absolutely continuous part of $\mu_\beta$ is given by the formula 
       $$h_\beta(s)  = \lim_{\epsilon \searrow  0} - \frac{1}{\pi}\Im I^\beta(s+i\epsilon),$$
       and that the singular part of $\mu_\beta$ is supported on $\{s:\limsup_{\epsilon \searrow  0} \Im I^\beta(s+i\epsilon)=\infty\}$.  By Lemma \ref{lem:Ilambda} $\lim_{\lambda \to 0}| I(\lambda)|=\infty$, it follows from \eqref{eq:Ilambda} that $I^\beta$ is bounded near the origin, which then guarantees that $\mu_\beta$ is absolutely continuous near the origin. As a result, there exists some $\delta>0$ such that 
 \begin{equation}
 \label{eq:Ilambdaspec}  p_\beta(t,0,0) = \int_{\Sigma_\beta} e^{ts} d\mu_\beta(s) \sim  \int_{0}^{\delta} e^{-t u} h_\beta(-u) du + O(e^{-\delta t})~\mbox{ as } t\to\infty.
 \end{equation} 
When $d=1$, $\lim_{\epsilon \searrow 0} I (s+i\epsilon)=\frac{e^{-i \pi/2}}{ \sqrt{|s|}\sqrt{2+s}}$. This gives  $h_\beta (s)\sim  \frac{ \sqrt{2|s|}}{\beta^2 \pi}\mbox{ as }s\nearrow 0$. Therefore by \eqref{eq:Ilambdaspec} we obtain
 \begin{align*}
   p_\beta(t,0,0)&\sim \frac{\sqrt{2}}{\beta^2 \pi }\int_0^\delta e^{-tu} \sqrt{u}du +O(e^{-\delta t})\\
   &= 
    \frac{\sqrt{2}}{\beta^2 \pi }\int_0^{\delta t}  e^{-v} \sqrt{v/t}d(v/t) +O(e^{-\delta t})\sim \frac{\sqrt{2}}{\beta^2 \pi t^{3/2}}\int_0^\infty e^{-v}  \sqrt{v} dv \\
    &= \frac{1}{\beta^2 \sqrt{2\pi} t^{3/2}},~\mbox{ as }t\to\infty.
    \end{align*}
    Suppose now $d=2$ and $\lambda = -s+\epsilon$ for some $s\ge 0$ and $\epsilon>0$.  It follows from the Lemma \ref{lem:Ilambda} that 
    $$ I^\beta (\lambda) =\frac{I(\lambda)(1- \beta \ol{I(\lambda)})}{|1- \beta I (\lambda)|^2}.$$
     Therefore, 
     $$- \frac{1}{\pi} \Im I(\lambda) = \frac {1}{\pi} \frac{-\Im I(\lambda) }{|1- \beta I(\lambda)|^2}\sim \frac{\pi }{\beta^2 (\ln |s|)^2/\pi^2},$$
          so that $\mu_\beta$ is absolutely continuous on  an interval $(-\delta,0)$ and we have  $h_\beta(s) \sim \frac{\pi}{\beta^2 (\ln |s|)^2}$. Furthermore, since $p_\beta(t,0,0)\to 0$, it is clear that $0$ is not an atom for $\mu_\beta$. Therefore, it follows from   \eqref{eq:Ilambdaspec} that  
 \begin{align*}
   p_\beta(t,0,0)&\sim \frac{\pi }{\beta^2 }\int_0^\delta e^{-tu} \frac{1} {(\ln u)^2} +O(e^{-\delta t})\\
   &= 
    \frac{\pi }{\beta^2 }\int_0^{\delta t}  e^{-v}\frac{1}{(\ln (v/t))^2} d(v/t)  +O(e^{-\delta t})\\
    &\sim \frac{\pi}{\beta^2  t (\ln t)^2 }\int_0^{\delta t} e^{-v}  \frac{1}{\left(\frac{\ln v}{\ln t}-1\right)^2} dv \sim  \frac{\pi }{\beta^2 t (\ln t)^2},~\mbox{ as }t\to\infty.
    \end{align*}
    \end{proof}
  Next we  study the solutions to \eqref{eq:prob}  through Martin boundary theory. For this we consider the simple symmetric random walk on $\Z^d$ killed upon hitting $0$.   The generator of this sub-markovian process is the restriction of $\Delta$ to functions vanishing at $0$, which could be formally written as  $H_{-\infty}$. Let $R_\lambda^{-\infty}= \lim_{\beta \to \infty} R_\lambda^{-\beta}$. Then  \eqref{eq:resolventeqn} shows that
   \begin{equation} 
   \label{eq:killed_res}  R_\lambda^{-\infty}(x,y) = R_\lambda(x,y) - \frac{R_\lambda (x,0)R_\lambda(0,y)}{I(\lambda)},
   \end{equation} 
    and an immediate calculation shows it is indeed the resolvent of the killed walk. In light of this discussion we identify the generator of the killed walk with $H_{-\infty}$. We need the following result: 
    \begin{lemma}
    \label{lem:Rlambda} 
    Let $B= \{e\in \Z^d:|e|=1\}$. Then for $x\ne 0$, 
      $$R_0^{-\infty}(x,\ch_B)=  2d   P_x (\tau_0<\infty).$$
    \end{lemma}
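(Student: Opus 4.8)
The plan is to compute $R_0^{-\infty}(x,\ch_B)$ probabilistically by recognizing it as an expected occupation-type quantity for the killed walk. By the probabilistic representation of the resolvent, $R_0^{-\infty}(x,\ch_B) = E_x\left[\int_0^{\tau_0} \ch_B(X(s))\,ds\right]$, the expected time the walk started at $x$ spends at nearest-neighbors of the origin before hitting $0$. The key observation is that each visit to a site $e\in B$ lasts an $\mathrm{Exp}(1)$ holding time, and upon leaving $e$ the walk jumps to one of its $2d$ neighbors uniformly; one of those neighbors is $0$, so with probability $1/(2d)$ the walk is absorbed immediately after that visit. Thus the number of visits to $B$ before $\tau_0$, call it $K$, is, conditionally on $\{\tau_0<\infty\}$, dominated by (in fact, can be compared to) a geometric-type structure, but the cleanest route is simply: $R_0^{-\infty}(x,\ch_B) = E_x[K]$ since each visit contributes mean-$1$ holding time.

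First I would establish $R_0^{-\infty}(x,\ch_B) = E_x[\#\{\text{visits to }B\text{ before }\tau_0\}]$, using that $R_0^{-\infty}$ is the Green's function of the killed walk and that holding times are i.i.d. $\mathrm{Exp}(1)$ independent of the jump chain (a standard continuous-time Markov chain fact; the jump rate is $1$ from every site so the mean holding time is exactly $1$). Next I would analyze the embedded discrete jump chain $Y$ killed at $0$: let $N$ be the number of times $Y$ visits $B$ before absorption at $0$. Every transition out of a state in $B$ goes to $0$ with probability exactly $1/(2d)$; conversely, the walk can only be absorbed at $0$ by jumping there from some $e\in B$. Hence, conditionally on ever reaching $0$, the last visit to $B$ is immediately followed by absorption, and more usefully, each time the chain is at a state of $B$ it is absorbed on the next step with probability $1/(2d)$, so the number of visits to $B$ before absorption is geometric with success probability $1/(2d)$ — but only on the event $\{\tau_0<\infty\}$; on the complementary event the walk either never reaches $B$ or wanders off. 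Carefully: for $x\ne 0$, I claim $E_x[N] = 2d\,P_x(\tau_0<\infty)$. To see this, note $P_x(\tau_0<\infty) = P_x(\text{chain hits }0)$, and the chain hits $0$ iff it makes a successful (probability $1/(2d)$) jump from $B$ to $0$ at some point; writing the strong Markov property at successive visits to $B$, $P_x(\tau_0<\infty) = E_x\left[\sum_{k\ge 1}\ch_{\{k\text{-th visit to }B\text{ occurs}\}} \frac{1}{2d}\ch_{\{\text{not absorbed before}\}}\right] = \frac{1}{2d}E_x[N]$, where the telescoping works because absorption at $0$ can only be triggered from $B$, so the event "$k$-th visit to $B$ occurs and no prior absorption" is just "$k$-th visit to $B$ occurs" and these events decrease in $k$.

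The main obstacle — and it is a mild one — is justifying the identity $P_x(\tau_0<\infty) = \frac{1}{2d}E_x[N]$ rigorously, i.e.\ making sure the bookkeeping of "visits to $B$" versus "absorption at $0$" is airtight, including the transient cases $d\ge 3$ where $E_x[N]$ could a priori be handled loosely. The clean way is to apply the strong Markov property at the stopping times $T_k := k$-th visit time of $Y$ to $B\cup\{0\}$ after which one observes: at each such time, if the current state is in $B$, the next step is to $0$ with probability $1/(2d)$ and otherwise back into $B^c\setminus\{0\}$ (from which, by another application of the Markov property, the walk returns to $B\cup\{0\}$ with some probability, etc.). Equivalently, I would introduce the process watched only when in $B\cup\{0\}$: it is a Markov chain on $B\cup\{0\}$ in which from any state of $B$ the chain jumps to $0$ w.p.\ $1/(2d)$ (absorption) and "survives" w.p.\ $1-1/(2d)$ landing somewhere in $B$, while from some states it may also leave to infinity without returning. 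Then $\{\tau_0<\infty\} = \{$absorption eventually occurs$\}$, each surviving step corresponds to exactly one future visit to $B$, and $E_x[N] = E_x\left[\sum_{k\ge 0} \ch_{\{\text{watched chain alive and in }B\text{ at step }k\}}\right]$. Since the per-step absorption probability from $B$ is the constant $1/(2d)$, a standard optional-stopping / first-step argument gives $P_x(\text{absorbed}) = \frac{1}{2d}E_x[N]$, and combining with the first paragraph yields $R_0^{-\infty}(x,\ch_B) = E_x[N] = 2d\,P_x(\tau_0<\infty)$, which is the claim. Finally I would double-check the $x\ne 0$ hypothesis is used exactly where needed (so that the walk actually starts killed-alive and the first visit to $B$, if any, precedes any absorption).
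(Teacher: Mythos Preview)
Your proof is correct and takes a genuinely more direct route than the paper. Both arguments begin from the same two facts: that $R_0^{-\infty}(x,\ch_B)$ equals the expected number $E_x[N]$ of jump-chain visits to $B$ before absorption at $0$ (since holding times are i.i.d.\ $\mathrm{Exp}(1)$ and independent of the jump chain), and that each jump out of a site in $B$ lands at $0$ with probability $1/(2d)$. From there the paper proceeds indirectly: it first reduces to $x=e_1$ via the strong Markov property, writes the number of visits from $e_1$ as a geometric variable with parameter $q_B+\tfrac{1}{2d}$ where $q_B=P_{e_1}(\tau_B=\infty)$, and then eliminates $q_B$ through a separate first-step identity for the return probability $p_0=P_{e_1}(\tau_0<\infty)$, finally combining with $P_x(\tau_0<\infty)=P_x(\tau_B<\infty)\,p_0$. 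Your argument instead decomposes $\{\tau_0<\infty\}$ directly according to which visit to $B$ is immediately followed by the absorbing jump: since these events are disjoint and each has probability $P_x(V_k)\cdot\tfrac{1}{2d}$ by the strong Markov property, summing gives $P_x(\tau_0<\infty)=\tfrac{1}{2d}\sum_{k\ge 1}P_x(V_k)=\tfrac{1}{2d}E_x[N]$ in one stroke. This avoids the auxiliary quantity $q_B$ and the reduction to $e_1$, handling all $x\ne 0$ uniformly; the paper's version, in exchange, makes the geometric law of the visit count from $e_1$ explicit. Your bookkeeping is sound: the key point that ``$k$-th visit to $B$ occurs'' already entails ``no prior absorption'' is exactly because $0$ is reachable only from $B$.
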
 
    \begin{proof}
    We first observe that for all $x\ne 0$, 
  $$
    R_0^{-\infty} (x, \ch_B) = P_x (\tau_B <\infty) R_0^{-\infty} (e_1,\ch_B).
$$
   Let $q_B = P_{e_1} (\tau_B = \infty)$. The number of visits to $B$ starting from $e_1=(1,0,\dots)\in \Z^d$, until the first visit to the origin is clearly $Geom(q_B + \frac 1{2d})$. Thus, the time at $B$ starting from $e_1$, is the sum of $Geom(q_B+\frac {1}{2d})$ independent $Exp(1)$ random variables (and is therefore $Exp (q_B + \frac{1}{2d})$). As a result, $R_0^{-\infty} (e_1,B) = \frac{1}{q_B + \frac{1}{2d}}$. Next, observe that  $p_0$, the  probability that starting from the origin $X$ will return to $0$, is  equal to  $P_{e_1} (\tau_0 < \infty)$, and this is equal to $ \frac 1{2d} + (1-q_B - \frac {1}{2d}) p_0$.  Therefore, $2d p_0 = \frac{1}{q_B+ \frac{1}{2d}}$, so  that $R_0^{-\infty}(e_1,B) = 2d p_0$.      In addition, for any $x\in \Z^d$, $P_x (\tau_0 < \infty) = P_x (\tau_B < \infty) p_0$. Combining the two identities, the result follows. 
   \end{proof}

In order to study the martin boundary of $H_{-\infty}$ we define the function $A_\lambda$ as 
  \begin{equation}
  \label{eq:Alambda}
  A_\lambda(w) = I(\lambda)-R_\lambda(w,0)=  \frac{1}{\pi^d}\int_{[0,\pi]^d}\frac{1-\cos (\scp{\varphi,w}) }{\lambda+\Phi(\varphi)}d\varphi.
  \end{equation}
  We observe that $A_\lambda \in [0,\infty)$ and that  $A_\lambda(w)=0$ if and only if $w=0$. In addition, by dominated convergence, $A_0(w) =\lim_{\lambda \searrow 0} A_\lambda(w)$ exists and is finite.    We have the following known result, whose proof is given in the appendix.  
         \begin{lemma}~
        \label{lem:martin} 
        \begin{enumerate} 
        \item The  Martin boundary for $H_{-\infty}$ is spanned by $A_0(x)$.
        \item $A_0(e_1)=1$ and when  $d\ge3$, $A_0(x)=\frac{P_x (\tau_0 = \infty)}{\beta_{cr}}$.  
        \end{enumerate} 
         \end{lemma}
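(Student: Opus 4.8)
## Proof Proposal for Lemma \ref{lem:martin}

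The plan is to establish both parts using the explicit integral representation \eqref{eq:Alambda} for $A_\lambda$ together with the connection between the killed random walk and potential theory already developed in Lemma \ref{lem:Rlambda}.

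For part (i), the strategy is to show that $A_0$ is a positive harmonic function for $H_{-\infty}$ and that it is \emph{minimal}, so that it spans the one-dimensional Martin boundary (recall that the killed walk has a single minimal positive harmonic function precisely because $0$ is a single absorbing point and the walk, when transient, escapes to a single "point at infinity," while when recurrent the Martin boundary is still a point — the escape to $0$). First I would verify harmonicity away from $0$: applying $\Delta$ to $A_0(w) = \lim_{\lambda \searrow 0}\bigl(I(\lambda) - R_\lambda(w,0)\bigr)$ and using $(\lambda - \Delta)R_\lambda(\cdot,0) = \delta_0$ together with $(\lambda-\Delta)$ annihilating constants up to the $\lambda$ term, one gets $(\lambda - \Delta)A_\lambda(w) = \lambda I(\lambda) - \delta_0(w)$; letting $\lambda \searrow 0$ and noting $\lambda I(\lambda) \to 0$ in $d=1,2$ (Lemma \ref{lem:Ilambda}) while $\lambda I(\lambda)\to 0$ also in $d\ge 3$ since $I(0)<\infty$, we conclude $\Delta A_0(w) = 0$ for $w\ne 0$, i.e. $A_0$ is harmonic for the killed generator. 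Positivity and vanishing exactly at $0$ were already noted after \eqref{eq:Alambda}. Minimality: in the transient case $d\ge 3$, part (ii) identifies $A_0$ with a multiple of $x\mapsto P_x(\tau_0 = \infty)$, which is the (unique up to scaling) minimal harmonic function for the walk killed at $0$ — this is classical, since any bounded-below harmonic function for the killed walk is a constant multiple of the escape probability plus a Green-potential piece, and boundedness/normalization forces it. In the recurrent case $d=1,2$, one argues that the killed walk has exactly a one-point Martin boundary: every positive harmonic function $h$ for $H_{-\infty}$ with $h(0)=0$ must be proportional to $A_0$ — this follows from a last-exit or optional-stopping argument showing $h(x)/A_0(x)$ is constant, using that $A_0$ grows (to $\infty$ in $d=1$, as $A_0(x)=|x|$; logarithmically in $d=2$) and dominates any competing harmonic function by the recurrence of the underlying walk. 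I would cite the standard Martin boundary theory for this uniqueness rather than reprove it, since the excerpt treats this lemma as folklore.

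For part (ii), the first claim $A_0(e_1) = 1$ is a direct computation: we have
$$A_0(e_1) = \frac{1}{\pi^d}\int_{[0,\pi]^d} \frac{1 - \cos\varphi_1}{\Phi(\varphi)}\,d\varphi = \frac{1}{\pi^d}\int_{[0,\pi]^d}\frac{1-\cos\varphi_1}{\frac1d\sum_{j=1}^d(1-\cos\varphi_j)}\,d\varphi,$$
and by symmetry of the integrand under permuting the coordinates $\varphi_1,\dots,\varphi_d$, the average over the $d$ choices of numerator is exactly $\Phi(\varphi)$, so the integral collapses to $\frac{1}{\pi^d}\int_{[0,\pi]^d} 1\,d\varphi = 1$. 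For the identification in $d\ge 3$: here $I(0) = I < \infty$, and from \eqref{eq:Alambda} at $\lambda = 0$ we have $A_0(x) = I - R_0(x,0)$. Using the probabilistic meaning $R_0(x,0) = $ (expected time at $0$ starting from $x$) $= P_x(\tau_0<\infty)\cdot R_0(0,0) = P_x(\tau_0<\infty)\cdot I$, so $A_0(x) = I\bigl(1 - P_x(\tau_0<\infty)\bigr) = I\cdot P_x(\tau_0 = \infty)$. It remains to identify the constant $I$ with $1/\beta_{cr}$: by {\bf CM-I}(ii), $\beta_{cr} = P_0(X \text{ does not return to }0)$, and $P_0(\text{no return}) = 1/I$ since the total time at $0$ starting from $0$ is $\mathrm{Exp}(P_0(\text{no return}))$ (a geometric number of $\mathrm{Exp}(1)$ sojourns), giving $R_0(0,0) = I = 1/\beta_{cr}$. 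Hence $A_0(x) = P_x(\tau_0=\infty)/\beta_{cr}$ as claimed.

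The main obstacle I anticipate is the \emph{minimality/uniqueness} part of (i) in the recurrent cases $d=1,2$: showing rigorously that the Martin boundary of the walk killed at the origin is a single point requires either invoking the general Martin boundary machinery (Kemeny–Snell–Knapp style, or Hunt's theory) applied to this specific sub-Markov chain, or a hands-on argument that any positive $h$ with $h(0)=0$ satisfies $h = c A_0$. The latter can be done via the representation $h(x) = \sum_{e\in B} h(e)\,P_x(X_{\tau_B^{\text{first}}} = e,\ \tau_B < \tau_0)$ iterated, combined with the fact that $P_x(\tau_0 < \infty) = 1$ in the recurrent case forcing the "harmonic at infinity" component to be governed by the single function $A_0$; but making this airtight is the delicate point, and since the paper designates this lemma as folklore with its proof relegated to the appendix, I would present the computational parts (harmonicity, $A_0(e_1)=1$, the $d\ge3$ identification) in full and reference a standard source for the boundary-uniqueness statement.
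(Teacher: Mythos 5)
Your computational pieces are essentially right, and in a couple of spots you are more explicit than the paper: the symmetry trick for $A_0(e_1)=1$ (average the integrand over permutations of the $d$ coordinates, which turns the numerator into $\Phi(\varphi)$) is clean and correct, and the $d\ge 3$ identification via $A_0(x)=I-R_0(x,0)=I\cdot P_x(\tau_0=\infty)$ together with $I=R_0(0,0)=1/\beta_{cr}$ is a nice direct computation. Your harmonicity derivation via $(\lambda-\Delta)A_\lambda=\lambda I(\lambda)-\delta_0$ and $\lambda I(\lambda)\to 0$ is also fine and equivalent to the paper's direct Fourier computation $\Delta A_0(w)=\delta_0(w)$ (from which, incidentally, $A_0(e_1)=1$ also drops out by symmetry since $A_0(0)=0$). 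For $d\ge 3$ uniqueness your sketch is correct in spirit but looser than the paper's argument: the paper observes that if $u\ge 0$, $u(0)=0$ and $H_{-\infty}u=0$, then $\Delta u=c_1\delta_0$; adding the bounded Green potential $k=c_1R_0\delta_0$ makes $u+k$ $\Delta$-harmonic and hence constant, so $u$ is bounded and is determined by $P_x(\tau_0<\infty)$.

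The genuine gap is exactly where you flagged it: uniqueness/minimality for $d=1,2$. ``Cite standard Martin boundary theory'' is not a proof, and the ``hands-on last-exit'' alternative you gesture at is not actually carried out. The paper handles this with a short, self-contained computation that you missed and that makes the lemma genuinely elementary. From the resolvent identity \eqref{eq:killed_res} one gets, after substituting $R_\lambda(x,0)=I(\lambda)-A_\lambda(x)$ and simplifying,
\begin{equation*}
R_\lambda^{-\infty}(x,y)=A_\lambda(x)+A_\lambda(-y)-A_\lambda(x-y)-\frac{A_\lambda(x)A_\lambda(-y)}{I(\lambda)}.
\end{equation*}
Since $I(\lambda)\to\infty$ in the recurrent case, letting $\lambda\searrow 0$ gives $R_0^{-\infty}(x,y)=A_0(x)+A_0(-y)-A_0(x-y)$. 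From the integral representation \eqref{eq:Alambda}, $\{A_0(-y)-A_0(x-y)\}_y$ are Fourier coefficients of an $L^1$ function, so by Riemann--Lebesgue $A_0(-y)-A_0(x-y)\to 0$ as $|y|\to\infty$. Therefore for $x,x_0\ne 0$,
\begin{equation*}
\lim_{|y|\to\infty}\frac{R_0^{-\infty}(x,y)}{R_0^{-\infty}(x_0,y)}=\frac{A_0(x)}{A_0(x_0)},
\end{equation*}
which is precisely the statement that the Martin compactification of the killed walk has a single boundary point with kernel $A_0$. This replaces the vague appeal to ``the recurrence forcing a one-point boundary'' with an explicit limit of Green-function ratios, and it is the step your proposal needs to be complete.
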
 
         \subsection{Proof of Proposition \ref{pr:homo_class}}
         \begin{proof}
           When $\beta<\beta_{cr}$, we can choose $\alpha>0$  such that $\beta (1+\alpha)<\beta_{cr}$ and it follows from H\"older that
        $$ P_{\beta,t} (\I(t) \ge \epsilon t) \le P_0(\I(t) \ge \epsilon t)^{\alpha/(1+\alpha)}\frac{(Z_{(1+\alpha)\beta,t})^{1/(1+\alpha)}}{Z_{\beta,t}},$$
         and the first assertion follows by taking logarithms and then letting $t\to\infty$. 
       Next, when  $\beta > \beta_{cr}$ we have 
      $$\limsup_{t\to\infty} \frac{1}{t} \ln P_{\beta,t} (\I(t) \le \epsilon t)  \le \beta \epsilon - \lambda(\beta),$$
       the right-hand side being strictly negative, provided that $\epsilon<\lambda(\beta)/\beta$,  immediately leading  to the second assertion. 
    \end{proof}   
    
             \subsection{Proof of Theorem \ref{th:psib_desc}}
  \begin{proof} 
    We begin by proving existence.  Clearly, for all $\lambda> \lambda(\beta)$, the resolvent operator $R_\lambda^\beta$ defined in \eqref{eq:resolventeqn} is a bounded positive operator on $\ell^2(\Z^d)$.
    Existence of the positive resolvent automatically implies the existence of positive harmonic functions for $H_\beta -\lambda$.  Let $u_\lambda$ denote such a function satisfying $u_\lambda (0)=1$. 
  Now let $\lambda_n \searrow \lambda(\beta)$. It follows from Harnack inequality that by possibly passing to a subsequence $(u_{\lambda_n}:n\in \N)$ converges pointwise to a nonnegative harmonic function for $H_\beta - \lambda(\beta)$, $u_{\lambda(\beta)}$ satisfying $u_{\lambda(\beta)}(0)=1$.
 
  As for uniqueness, we first show that all  solutions which vanish at $0$ (or in fact at any other site) are identically zero. Suppose that $u$ is a solution satisfying $u(0)=0$, then since $H_\beta u (0) = \lambda(\beta) u(0)=0$, it follows that $\frac{1}{2d} \sum_{|e|=1} u(e)=0$, equivalently $u(e)=0$ for all $|e|=1$. By  induction on $|x|$, it  follows that $u \equiv0$.  Hence, in order to prove uniqueness, it is sufficient to show that there exists a unique solution $u$ with $u(0)=1$. 

  We split the proof of uniqueness according to the value of $\beta$. 
  \subsubsection*{Supercritical phase,  $\beta>\beta_{cr}$}
   Suppose that $u$ is a bounded solution to \eqref{eq:prob} with $u(0)=1$. Then  $(e^{-\lambda(\beta) t}u(X(t)):t\in \R_+)$ is a bounded ${\cal F}_t$-martingale with respect to the reference measure. In particular, $u(x) = E_x e^{-\lambda(\beta) \tau_0} u(X(\tau_0))= E_x e^{-\lambda(\beta) \tau_0}$. This proves uniqueness. 
      
As a side remark, we observe that  this allows to obtain   $\beta_{cr}$ directly, because on the one hand, $\lim_{\beta \searrow \beta_{cr}} E_{e_1}  e^{-\lambda(\beta) \tau_0}=P_{e_1} (\tau_0 < \infty)$ while on the other hand, $0= (H_\beta E_x e^{-\lambda(\beta) \tau_0})(0) = E_{e_1} e^{-\lambda(\beta) \tau_0}-(1-\beta)$. Therefore letting $\beta \searrow \beta_{cr}$ we obtain $1-\beta_{cr} = P_{e_1} (\tau_0<\infty)$.
 
   \subsubsection*{Critical and subcritical phases, $\beta \le \beta_{cr}$} 
  Recall that here $\lambda(\beta)=0$. Now let  $u$ be any solution to \eqref{eq:prob} with $u(0)=1$ and let  $\ol{u}(x) = (1-\delta_0(x))  u(x)$. 
   Then $$H_{-\infty} \ol{u}(x) = - \frac{u(0)}{2d}\ch_B(x).$$
    Letting $v(x)= \ol{u}(x) -\frac{1}{2d}R_0^{-\infty}(x,\ch_B)$, we observe that $v$ is harmonic for $H_{-\infty}$. Then by Lemma \ref{lem:martin},  is  $v = c A_0(x)$. In particular,
   $$ \ol{u} = c A_0(x) + \frac {R_0^{-\infty}(x,\ch_B)}{2d} =c A_0(x) + P_x (\tau_0<\infty),$$   
   where the second equality is due to Lemma \ref{lem:Rlambda}. On rewriting the equation $H_\beta u (0)=0$, we have 
  $$ \frac 1{2d} \sum_{|e|=1}\left (  \ol{u}(e)  - 1  \right) + \beta =0.$$
  This implies $u(e_1) = \ol{u}(e_1) = 1 - \beta$, as well as 
 $ c A_0(e_1) + P_{e_1} (\tau_0 <\infty) =1-\beta$. But $P_{e_1} (\tau_0<\infty) = 1-\beta_{cr}$. Therefore since $A_0(e_1)=1$, $c= \beta_{cr}-\beta$ and we have proved that 
 $$u(x) = (\beta_{cr}-\beta)A_0(x)+P_x(\tau_0<\infty).$$ 
 When $d=1,2$, the second term on the right-hand side is $1$ and $\beta_{cr}=0$, which leads to the formula  in the theorem. When $d\ge 3$ we have
  \begin{align*}
   u(x) &= 1- P_x (\tau_0=\infty) + (\beta_{cr}-\beta)\frac{  P_x (\tau_0 = \infty) }{P_{e_1} (\tau_0=\infty)}\\
   & =1 - P_x(\tau_0 =\infty) + \frac{\beta_{cr}-\beta}{\beta_{cr}}P_x (\tau_0=\infty)\\
    & = 1 -\frac{\beta} {\beta_{cr}}P_x (\tau_0=\infty).
    \end{align*}
               \end{proof} 
  \subsection{Proof of Theorem \ref{th:TR_theorem}}
  \begin{proof}
  The resolvent for  $(H_\beta - \lambda(\beta) )^{\psi_\beta}$ is equal to $\frac{1}{\psi_\beta(x)}R^\beta _{\lambda +\lambda(\beta)}(x,y)\psi_\beta (y)$. In particular, the expected time at $0$ starting from $0$ is equal to $R^\beta _{\lambda (\beta)}(0,0)= I^\beta(\lambda(\beta)) = \frac{ I(\lambda(\beta))}{1-\beta I(\lambda(\beta))}$. It is easy to see that $\beta_{cr} = \frac{1}{I(0^+)}$. Therefore, if $\beta< \beta_{cr}$, it follows that $I^\beta(\lambda (\beta)) = I^\beta (0)<\infty$, so that $X$ is transient. If $\beta= \beta_{cr}$, then $I^\beta (\lambda (\beta)) = I^{\beta_{cr}}(0)=\infty$, and if $\beta > \beta_{cr}$, then since $\psi_\beta \in \ell^2 (\Z^d)$ and $\psi_\beta^2$ is an invariant measure for $ (H_\beta -\lambda (\beta) )^{\psi_\beta}$, it follows that the process is positive recurrent. Conversely, any invariant density $\mu$ must satisfy $(H_\beta - \lambda (\beta)) \frac{\mu}{ \psi_\beta }=0$, that is $v=\frac{\mu}{\psi_\beta}$ is a positive solution to $(H_\beta - \lambda(\beta) ) v=0$.   Since by Theorem \ref{th:psib_desc} such a solution is equal to $c \psi_\beta$, it follows that the process is positive recurrent if and only if $\mu = c \psi_\beta^2$ for some $c$.  We know that this condition holds for $\beta> \beta_{cr}$. It remains to check $\beta =\beta_{cr}$. But by Theorem \ref{th:psib_desc}-(ii), $\psi_{\beta_cr}(x) = P_x (\tau_0 < \infty)$, and it is well know that this decays as $|x|^{2-d}$, hence square  integrable if and only if $d\ge 5$. 
 \end{proof} 
\subsection{Proof of Theorem \ref{th:Zbeta}}
\begin{proof}
Part \ref{Zbeta_cor} of the theorem follows directly from \eqref{eq:Zbeta} and Proposition \ref{pr:pbeta00}.

We turn to the proof of  part \ref{Zbeta_martin}.  Recall the function $A_\lambda$ defined in \eqref{eq:Alambda}. Since for every constant  $c>-\lambda$, we have  $\int_0^\infty e^{-(\lambda +c)t} dt = (\lambda+ c)^{-1}$, then from  \eqref{eq:RlambdaFourier} we obtain 
  $$
    A_\lambda(w) =  \int_0^\infty e^{-\lambda t}  \underset{=\tilde \psi_t(w)}{\underbrace{\frac{1}{\pi^{d}}\int_{[0,\pi]^d} e^{-\Phi(\varphi) t} \left ( 1- \cos \scp{\varphi ,w} \right) d \varphi }}dt.
   $$
    In other words, $A_\lambda(w)$ is the Laplace transform of $t\to \tilde \psi_t(w)$. 
  By monotone convergence, 
    \begin{equation} 
    \label{eq:A0} 
    A_0(x)=\int_0^\infty \tilde \psi_t(x) dt <\infty.
    \end{equation}
      We need some estimates on $\tilde\psi$. Below $c$ denotes a  positive whose value may change. Recall that 
$$\alpha^2 \ge 1-\cos \alpha = 2 \sin ^2 \frac \alpha 2 \ge c \alpha^2,$$
  the last  inequality holds for $\alpha \in [0,\pi]$.    Therefore $1- \cos \scp{\varphi ,w}\le \scp{\varphi,w}^2 \le |\varphi|^2 |w|^2$ and 
        $\Phi(\varphi)=\frac{2}{d}\sum_{j=1}^d \sin^2 (\frac {\varphi_j }{2}) \ge c |\varphi|^2$.
  Thus, 
             \begin{align}
             \nonumber
               \tilde \psi_t(w) &\le  c|w|^2  \int_{[0,\pi]^d} e^{-c |\varphi|^2 t} |\varphi |^2d \varphi \\
             \nonumber
              &\le c|w|^2   \int_0^{\infty} e^{-c r^2  t} r^{2} r^{d-1} dr\\
         \label{eq:tildepsi}
              &=c |w|^2 \int_0^{\infty} e^{-c  u  t}  u^{d/2} du= |w|^2  O(t^{-(d/2+1)}),~\mbox{ as } t\to\infty.
              \end{align}
              We recall that $R_\lambda^\beta(x,\ch)$ is the Laplace transform of the function $t\to Z_{\beta,t} (x)$. Furthermore, by \eqref{eq:resolventeqn} 
  $$R_\lambda^\beta(x,\ch) = \frac 1 \lambda \left (1+ \frac{\beta R_\lambda(x,0)}{1-\beta I(\lambda)}\right) = 
          \frac{ 1 - \beta I (\lambda) +\beta  R_\lambda(x,0)}{\lambda(1-\beta I(\lambda))}= \frac {1 -\beta A_\lambda (x)}{\lambda(1-\beta I(\lambda))}.$$
          Letting $x=0$ and recalling that $A_0(0)=0$, it follows that the Laplace transform of $t\to Z_{\beta,t}$ is $\frac{1}{\lambda(1-\beta I(\lambda))}$. Since $A_\lambda(x)$ is the Laplace transform of $t\to\tilde \psi_t (x)$, it follows that 
     \begin{align*}
       Z_{\beta,t}(x) & = Z_{\beta,t} -\beta (\tilde \psi_\cdot (x) * Z_{\beta,\cdot }) (t)\\
       &=  Z_{\beta,t}-\beta \left (  \int_0^ {(1-\epsilon) t}+ \int_{(1-\epsilon)t}^t \left ( Z_{\beta,t-s} \tilde \psi_s (x) \right ) ds\right) . \\
        &= Z_{\beta,t} - \beta \int_0^{(1-\epsilon)t}  \left ( Z_{\beta,t-s} \tilde \psi_s (x) \right ) ds + O(1)\int_{(1-\epsilon)t}^t \tilde \psi_s(x) ds \\
         &= Z_{\beta,t} - (1+o(1))\beta  Z_{\beta,t}\int_0^{(1-\epsilon)t} \tilde \psi_s(x) ds + o(Z_{\beta,t})\\
         & \sim  Z_{\beta,t}(1-\beta A_0(x)),~\mbox{ as }t\to\infty. 
      \end{align*}
       The second line  is due to the fact that $Z_{\beta,t} \le 1$, the third line is due to part \ref{Zbeta_cor} of the theorem and \eqref{eq:tildepsi},  and the   last line follows from \eqref{eq:A0}. 
       \end{proof}
 \subsection{Proof of Theorem \ref{th:small times}}
 \begin{proof} 
 We first prove convergence of finite dimensional distributions.  Let $0=t_0 < t_1<\dots < t_n \le T$ and let 
$0=x_0 ,x_1,\dots ,x_n \in \Z^d$. Then
 \begin{align}
 \nonumber 
   P_{\beta,t} (\cap_{i=1}^n \{X(t_i)=x_i\})& = \frac{\prod_{i=0}^{n-1}p_{\beta}(t_{i+1}-t_i,x_i,x_{i+1})}{Z_{\beta,t}}\times  Z_{\beta,t-t_n}(x_n)\\
   \nonumber
    &= \prod_{i=0}^{n-1}\frac{ p_{\beta}(t_{i+1}-t_i,x_i,x_{i+1}) Z_{\beta,t}(x_{i+1})}{ Z_{\beta,t} (x_i)} \times 
     \frac{Z_{\beta,t-t_n}(x_n)}{Z_{\beta,t}(x_n)} \\&
   \nonumber 
     \sim\prod_{i=0}^{n-1} \frac{1}{\psi_{\beta}(x_i)} p_{\beta}(t_{i+1}-t_i,x_i,x_{i+1}) \psi_\beta(x_{i+1}),\mbox{ as }t\to\infty,
     \end{align} 
where the last line is due to  theorem \ref{th:Zbeta}. 

The formula for $q_\beta$ follows from the fact that $q_\beta = \frac{1}{\psi_\beta(x)}p_{\beta}(t,x,y) \psi_\beta (y)$, and $p_\beta (t,x,y) = E_x \left [ \delta_y (X(t)) e^{\beta \I (t)}\right ] $.

Let $Q$ the distribution of $X$ under the transition function $q_\beta$.  Using the formula for  $q_\beta$ and the Markov property for $X$ under $E_0$, we conclude that 
$$ E^Q_0 \left[  \cap_{i=1}^n \{X(t_i)=x_i\} \right] = \frac{1}{\psi_\beta (0)}E_0 \left [ \psi_\beta (X(T))e^{\beta \I (T)} ; \cap_{i=1}^{n-1} \{X(t_i)=x_i\}\right].$$
Since $\psi_\beta (0)=1$, we have that $Q_0|_{{\cal F}_T}\ll P_0|_{{\cal F}_T}$ and that the Radon-Nikodym derivative is  $\psi_\beta (X(T)) e^{\beta \I (T)}$. 

To prove tightness, it is enough to show that for any $\epsilon>0$, there  exists some set $K_\epsilon\in {\cal F}_T$, compact in the topology on $D[0,T]$ such that 
 $$\lim_{\epsilon \to 0} \liminf_{t\to\infty} P_{\beta,t}(K_\epsilon)=1.$$ 
 Observe that $d P_{\beta,t}|_{{\cal F}_T} /d P_0|_{{\cal F}_T} = \frac{e^{\beta \I (T)}E_{X(T)} e^{\beta \I (t-T)}}{Z_{\beta,t}}$. 
It then follows from Theorem \ref{th:Zbeta}-(ii) and Fatou's lemma, that 
$$  \liminf_{t\to\infty} P_{\beta,t} (K_\epsilon) \ge E_0 \left [ \psi_\beta (X(T)) e^{\beta \I (T)} ; K_\epsilon\right] = Q_0 (K_\epsilon),$$
For each $\epsilon$, let $K_\epsilon$ be a compact set such that $P_0(K_\epsilon) \ge 1-\epsilon$. Then since $Q_0 |_{{\cal F}_T}\ll P_0|_{{\cal F}_T}$, it follows that $\lim_{\epsilon\to 0} Q_0( K_\epsilon)=1$, and the result follows. 
 \end{proof}
 
  \subsection{Proof of Theorem \ref{th:scaling_limit}} 
   The proof of the theorem consists of several stages. 
   \subsubsection{Tightness} 
  \label{sec:tightness} 
   \begin{lemma}
   \label{lem:tightness}
    Let  $\{Z^{(n)}_s:s \in [0,1]\}_{n\in\N}$ be a sequence of  continuous-time Markov chains with $Z^{(n)}$ having a countable state space ${\cal S}_n\subset \R$. Suppose that 
    \begin{enumerate} 
 \item $\{Z^{(n)}_0\}_{n\in \N}$ are tight. 
 \item There exists $t_0>0$, such that $\sup_{x\in {\cal S}_n} E |Z_t^{(n)} - Z_0^{(n)}|\le f(t)$ if $t\le t_0$, and  $f$ satisfies  $\lim_{t\searrow 0} f(t) =0$.  
 \end{enumerate} 
 Then $\{Z^{(n)}\}_{n\in\N}$ is  tight in $D[0,1]$. 
   \end{lemma}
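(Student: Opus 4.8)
The plan is to invoke Aldous's tightness criterion for $D[0,1]$, which reduces the claim to two ingredients: a \emph{compact containment} estimate, $\lim_{K\to\infty}\limsup_{n\to\infty}P(\sup_{s\le1}|Z^{(n)}_s|>K)=0$, and the \emph{Aldous oscillation estimate}, that for every $\epsilon>0$
$$\lim_{\delta\searrow0}\ \limsup_{n\to\infty}\ \sup_{\tau}\ P\big(|Z^{(n)}_{(\tau+\delta)\wedge1}-Z^{(n)}_\tau|>\epsilon\big)=0,$$
the inner supremum running over stopping times $\tau\le1$ for the natural filtration $\{\mathcal F^n_s\}$ of $Z^{(n)}$. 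Once both are in hand, Aldous's criterion gives tightness of $\{Z^{(n)}\}$ in $D[0,1]$.

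The oscillation estimate is where the Markov hypothesis does the work, and it costs almost nothing. Writing $\bar f(\delta)=\sup_{s\le\delta}f(s)$, which still tends to $0$, hypothesis (2) and Markov's inequality give $\sup_{y\in\mathcal S_n}\sup_{s\le\delta}P_y(|Z^{(n)}_s-Z^{(n)}_0|>\epsilon)\le\bar f(\delta)/\epsilon$ for $\delta\le t_0$, uniformly in $n$. For a stopping time $\tau\le1$ the increment $\theta=(\tau+\delta)\wedge1-\tau\in[0,\delta]$ is $\mathcal F^n_\tau$-measurable, so the strong Markov property bounds $P(|Z^{(n)}_{\tau+\theta}-Z^{(n)}_\tau|>\epsilon\mid\mathcal F^n_\tau)$ by the same $\bar f(\delta)/\epsilon$; taking expectations finishes this part.

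For compact containment I would first establish a maximal inequality by a standard stopping-time argument. With $\tau_\epsilon=\inf\{s:|Z^{(n)}_s-Z^{(n)}_0|>\epsilon\}$, on $\{\tau_\epsilon\le t\}$ one has $|Z^{(n)}_{\tau_\epsilon}-Z^{(n)}_0|\ge\epsilon$ (the chains being pure-jump, $\tau_\epsilon$ is a jump time), so on $\{\tau_\epsilon\le t\}\cap\{|Z^{(n)}_t-Z^{(n)}_0|\le\epsilon/2\}$ also $|Z^{(n)}_t-Z^{(n)}_{\tau_\epsilon}|\ge\epsilon/2$; splitting $\{\tau_\epsilon\le t\}$ accordingly and applying the strong Markov property at $\tau_\epsilon$ together with the bound from the previous paragraph gives, for $t\le t_0$ with $\bar f(t)\le\epsilon/4$,
$$\sup_n\ \sup_{y\in\mathcal S_n}\ P_y\Big(\sup_{s\le t}|Z^{(n)}_s-Z^{(n)}_0|>\epsilon\Big)\le\frac{4\bar f(t)}{\epsilon}.$$
I would then partition $[0,1]$ into $N=\lceil1/t_0\rceil$ blocks of length $\le t_0$: conditioning at the block endpoints and applying this estimate with level $K/N$ (legitimate once $K\ge4N\bar f(t_0)$) yields $\sup_nP(\sup_{s\le1}|Z^{(n)}_s-Z^{(n)}_0|>K)\le 4N^2\bar f(t_0)/K\to0$ as $K\to\infty$, and combining with the tightness of $\{Z^{(n)}_0\}$ from hypothesis (1) gives compact containment.

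The main obstacle is exactly this passage from a short-time displacement bound, hypothesis (2), to control of the running maximum on all of $[0,1]$. One cannot do it by telescoping first moments across a fine partition: if $f(t)\asymp\sqrt t$, as happens in the application to Theorem \ref{th:scaling_limit}, then $\sum E|Z^{(n)}_{(k+1)\delta}-Z^{(n)}_{k\delta}|\le\delta^{-1}\bar f(\delta)$ diverges as $\delta\searrow0$. The maximal inequality sidesteps this by working over a single fixed partition of mesh $t_0$ and absorbing the combinatorial loss into the requirement that the displacement level $K$ be large, which is precisely what compact containment tolerates. Everything else is routine bookkeeping.
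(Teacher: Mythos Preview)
Your argument is correct, but it takes a longer route than the paper's. You invoke the form of Aldous's criterion that requires \emph{compact containment} of the running supremum, i.e.\ control of $P(\sup_{s\le1}|Z^{(n)}_s|>K)$, and this forces you into the maximal-inequality detour and the block argument. The paper instead cites the version of Aldous's criterion in \cite[Theorem~34.1]{bass}, which only asks for tightness of $\{Z^{(n)}_s\}_n$ at each \emph{fixed} $s\in[0,1]$, together with the same oscillation condition. For pointwise tightness one simply telescopes over a fixed partition $0=t_0<\dots<t_K=s$ of mesh $\le\delta$ with $c:=\sup_{t\le\delta}f(t)<\infty$: then $\{|Z^{(n)}_s|>R\}\subset\{|Z^{(n)}_0|>R/(K+1)\}\cup\bigcup_j\{|Z^{(n)}_{t_{j+1}}-Z^{(n)}_{t_j}|>R/(K+1)\}$, and Markov's inequality plus hypothesis~(2) give $P(|Z^{(n)}_s|>R)\le P(|Z^{(n)}_0|>R/(K+1))+Kc(K+1)/R$, which is handled by hypothesis~(1). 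The concern you raise about $\sum E|Z^{(n)}_{(k+1)\delta}-Z^{(n)}_{k\delta}|$ diverging as $\delta\searrow0$ does not arise here, because $\delta$ stays fixed and only $R\to\infty$. Your approach buys a genuinely stronger conclusion (uniform-in-time containment) at the cost of the extra maximal inequality; the paper's approach is shorter because the cited form of Aldous's criterion does not need it.
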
 
   \begin{proof}
     We will apply Aldous tightness criterion as it appears in \cite[Theorem 34.1]{bass}. We need to show the following: 
     \begin{enumerate} 
     \item For every $s \in [0,1]$, $\{Z^{(n)}_s\}_{n\in\N}$  is a tight sequence. 
     \item If  $\tau_n$ is a stopping time for $Z^{(n)}$, and  $\{\delta_n\}_{n\in\N}$ is a deterministic sequence decreasing to $0$, then 
      $ Z^{(n)}_{\tau_n+ \delta_n} - Z^{(n)}_{\tau_n}\to 0$ in probability as $n\to\infty$.
      \end{enumerate} 
      To prove the first, observe that $|Z^{(n)}_t| \le \sum_{j=1}^K |Z^{(n)}_{t_j} - Z^{(n)}_{t_{j-1}}| + |X^{(n)}_0|$, with $t_0=0<t_1<\dots< t_K =t$ and  $t_{j+1}-t_j < \delta$, where $\delta$ is chosen so that  $\sup_{t \le \delta }  f(t)=c < \infty$. Thus, 
      $\{|Z^{(n)}_t |> R\} \subset \{|Z^{(n)}_0| >R/(K+1)\} \cup_{j=0}^K \{ |Z^{(n)}_{t_{j+1}} - Z^{(n)}_{t_j}|  > R/(K+1)\}$. 
     So that by the Markov property, Chebychev, and the second condition, we have that 
     $$ P(|Z^{(n)}_t |> R) \le P( |Z^{(n)}_0| >R/(K+1)) + \frac{ K c}{R / (K+1)}.$$ 
     The right-hand side is independent of $n$, and tends to $0$ as $R\to\infty$. This completes the proof of the first. \\
     As for the second, it immediately follows from the second condition in the theorem, the  Markov property and the fact that $Z^{(n)}$ has a countable state space. 
         \end{proof}

We will now show that the processes $\{X^{(n)}\}$ in the statement of Theorem \ref{th:scaling_limit} are tight in $D[0,1]$. by showing that they satisfy the conditions of Lemma \ref{lem:tightness}. Specifically we will show 
\begin{prop}
\label{pr:translation_bd}
 Assume $d=1$ and $\beta <0$. Then  for any $x_0\in \Z^d$, 
  $$E^Q_{x_0}  |X_t - X_0|\le   \int_0^t P_0 (X_s = 0) ds -\beta \int_0^t Z_{\beta,s}  ds.$$ 
\end{prop}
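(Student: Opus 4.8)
The plan is to apply Dynkin's formula for the process with generator $H_\beta^{\psi_\beta}u=\tfrac{1}{\psi_\beta}H_\beta(u\psi_\beta)$ — whose law is $Q$ by Theorem~\ref{th:small times} — to the test function $f(x)=|x-x_0|$, taking advantage of the fact that in $d=1$ both $f$ and $\psi_\beta(x)=1-\beta|x|$ (Theorem~\ref{th:psib_desc}(ii), since $A_0(x)=|x|$) have elementary discrete Laplacians. Since $f(X_0)=0$ under $Q_{x_0}$, Dynkin's formula reads
\[
E^Q_{x_0}|X_t-X_0|\;=\;E^Q_{x_0}\!\int_0^t\big(H_\beta^{\psi_\beta}f\big)(X_s)\,ds,
\]
so everything comes down to bounding the integrand and then replacing the starting point $x_0$ by $0$.

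For the generator, expand $H_\beta(f\psi_\beta)=\Delta(f\psi_\beta)+\beta\,\delta_0\,(f\psi_\beta)(0)$ via the discrete Leibniz rule $\Delta(uv)=u\,\Delta v+v\,\Delta u+\Gamma(u,v)$, where $\Gamma(u,v)(x)=\tfrac12\sum_{e=\pm1}\big(u(x+e)-u(x)\big)\big(v(x+e)-v(x)\big)$. Since $\Delta|\,\cdot\,-x_0|=\delta_{x_0}$ and $\Delta\psi_\beta=-\beta\delta_0$, the two $\delta_0$-terms cancel and one is left with
\[
\big(H_\beta^{\psi_\beta}f\big)(x)\;=\;\delta_{x_0}(x)+\frac{\Gamma(f,\psi_\beta)(x)}{\psi_\beta(x)}.
\]
Now $\Gamma(f,\psi_\beta)=-\beta\,\Gamma(|\,\cdot\,-x_0|,|\,\cdot\,|)$, and since every increment $|x+e-x_0|-|x-x_0|$ and $|x+e|-|x|$ lies in $\{-1,0,1\}$ we get $\Gamma(|\,\cdot\,-x_0|,|\,\cdot\,|)(x)\le1$, hence $\Gamma(f,\psi_\beta)(x)\le-\beta$ for every $x$. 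Therefore $H_\beta^{\psi_\beta}f(x)\le\delta_{x_0}(x)-\beta/\psi_\beta(x)$, and since $Q_{x_0}(X_s=x_0)=q_\beta(s,x_0,x_0)=p_\beta(s,x_0,x_0)$ the displayed identity yields
\[
E^Q_{x_0}|X_t-X_0|\;\le\;\int_0^t p_\beta(s,x_0,x_0)\,ds\;-\;\beta\int_0^t E^Q_{x_0}\!\big[\psi_\beta(X_s)^{-1}\big]\,ds .
\]

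Two comparisons then finish the proof. First, $p_\beta(s,x_0,x_0)=E_{x_0}[e^{\beta\I(s)}\delta_{x_0}(X_s)]\le p_0(s,x_0,x_0)=p_0(s,0,0)=P_0(X_s=0)$, by $e^{\beta\I(s)}\le1$ and translation invariance. Second, $E^Q_{x_0}[\psi_\beta(X_s)^{-1}]=Z_{\beta,s}(x_0)/\psi_\beta(x_0)$ by the definition of $q_\beta$ (cf.\ \eqref{eq:Q_pol}), so in particular $E^Q_0[\psi_\beta(X_s)^{-1}]=Z_{\beta,s}$, and it suffices to show $E^Q_{x_0}[\psi_\beta(X_s)^{-1}]\le E^Q_0[\psi_\beta(X_s)^{-1}]$. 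For this, observe that $H_\beta^{\psi_\beta}$ commutes with the reflection $x\mapsto-x$, so under $Q$ the modulus $(|X_t|)$ is a continuous-time birth-death chain on $\Z_{\ge0}$; birth-death chains are stochastically monotone, hence $|X_s|$ under $Q_{x_0}$ stochastically dominates $|X_s|$ under $Q_0$, and since $x\mapsto\psi_\beta(x)^{-1}=(1+|\beta|\,|x|)^{-1}$ is nonincreasing in $|x|$ the desired inequality follows. Combining the two comparisons (and $-\beta>0$) gives exactly the asserted bound.

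The main obstacle is the second comparison — transferring from $Q_{x_0}$ to $Q_0$ — and it is precisely the birth-death structure of $(|X_t|)$ under $Q$ that unlocks it; this is the step I would treat most carefully. A secondary, routine point is justifying Dynkin's formula for the unbounded $f$: under $Q$ the total jump rate is at most $1+|\beta|$, so the chain does not explode and $E^Q_{x_0}\sup_{s\le t}|X_s|<\infty$, which makes a truncation/dominated-convergence argument go through (one also uses that $H_\beta^{\psi_\beta}f$ is bounded).
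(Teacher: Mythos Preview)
Your proof is correct and follows the same route as the paper: bound the generator applied to $|x-x_0|$, apply a (truncated) Dynkin formula, and then reduce from $Q_{x_0}$ to $Q_0$ via stochastic monotonicity of $|X|$. The only presentational differences are that the paper computes $\mathcal{L}f$ directly rather than through the discrete Leibniz/$\Gamma$ formalism, and realizes the monotonicity step by an explicit coupling (Lemma~\ref{lem:coupling}) rather than by citing birth--death monotonicity.
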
 
To continue we  recall the definition of the rescaled $D[0,1]$-valued  processes $X^{(n)}$ given by $X^{(n)}_t = X_{nt}/\sqrt{n}$ for $t \in[0,1]$, defined in the first paragraph of Section \ref{sec:scaling}. Observe that if $\nu_n$ is an initial distribution for $X$, then it induces an initial distribution $\nu_1^{(n)}$ for $X^{(n)}$, a Borel measure on $\R$,  given by the relation 
\begin{equation} 
\label{eq:nu1}  \nu^{(n)}_1 (A) = \sum_{\{z \in \Z: z/\sqrt{n}  \in A\} } \nu_n (z)= \nu_n ( \sqrt{n} A).
\end{equation} 
Note that the mapping $\nu_n \to \nu^{(n)}_1$ from the space of probability distributions on $\Z$ to the set of Borel  measures on $\R$ is one-to-one, and therefore $\nu^{(n)}_1$ uniquely determines $\nu_n$. It follows from this proposition and Lemma \ref{lem:tightness} that 
\begin{corollary}
\label{cor:XQ_tight}
Let $\{\nu_n:n\in\N\}$ be an sequence of initial distributions on $\Z$ such that $\{\nu^{(n)}_1:n\in\N\}$ is  tight. Then the family $\{Q_{\nu_n} (X^{(n)} \in \cdot) : n\in\N\}$ is tight. 
\end{corollary}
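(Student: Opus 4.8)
The plan is to verify the two hypotheses of Lemma~\ref{lem:tightness} for the sequence $Z^{(n)}=X^{(n)}$ under $Q_{\nu_n}$, each $Z^{(n)}$ being a continuous-time Markov chain on the countable state space $\mathcal S_n=n^{-1/2}\Z\subset\R$ (it is the spatial rescaling and time-change of the chain generated by $H_\beta^{\psi_\beta}$). The first hypothesis is immediate: by the defining relation \eqref{eq:nu1}, the law of $X^{(n)}_0$ is precisely $\nu^{(n)}_1$, so $\{X^{(n)}_0\}_{n\in\N}$ is tight by assumption. Everything therefore reduces to the second hypothesis, the uniform small-time modulus bound.

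For that I would use the scaling identity $X^{(n)}_t-X^{(n)}_0=n^{-1/2}\bigl(X_{nt}-X_0\bigr)$ together with Proposition~\ref{pr:translation_bd}: for any starting point $x_0\in\Z$,
$$
E^Q_{x_0}\bigl|X^{(n)}_t-X^{(n)}_0\bigr|=\frac{1}{\sqrt n}\,E^Q_{x_0}\bigl|X_{nt}-X_0\bigr|\le\frac{1}{\sqrt n}\int_0^{nt}\bigl(P_0(X_s=0)-\beta Z_{\beta,s}\bigr)\,ds.
$$
The integrand is bounded by the constant $1-\beta$ on $[0,1]$, while for $s\ge 1$ both terms are $O(s^{-1/2})$: for $Z_{\beta,s}$ this is Theorem~\ref{th:Zbeta}(i), and $P_0(X_s=0)=p_0(s,0,0)$ has this order by the local central limit theorem for the nearest-neighbor walk (equivalently, by a Tauberian theorem applied to the Laplace transform $\lambda\mapsto I(\lambda)/\lambda$ coming from Lemma~\ref{lem:Ilambda}). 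Splitting the integral at $s=1$ then gives $\int_0^{T}\bigl(P_0(X_s=0)-\beta Z_{\beta,s}\bigr)\,ds\le c_1\sqrt T$ for all $T>0$, with $c_1=c_1(\beta)$. Taking $T=nt$ and dividing by $\sqrt n$ yields the scale-invariant bound
$$
\sup_{n\in\N}\ \sup_{x_0\in\Z}E^Q_{x_0}\bigl|X^{(n)}_t-X^{(n)}_0\bigr|\le c_1\sqrt t=:f(t),\qquad t>0,
$$
and since $X^{(n)}$ started at $x\in\mathcal S_n$ is $X^{(n)}$ under $Q_{\sqrt n\,x}$, this is exactly the bound required in hypothesis (ii) of Lemma~\ref{lem:tightness} (with $f(t)=c_1\sqrt t\to 0$ as $t\searrow 0$, and any $t_0>0$). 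The lemma then gives tightness of $\{X^{(n)}\}$ under $Q_{\nu_n}$ in $D[0,1]$, which is the assertion of the corollary.

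I do not anticipate a genuine obstacle here, since the corollary is essentially a repackaging of Proposition~\ref{pr:translation_bd}. The only points needing (minor) care are that the modulus bound be uniform in the starting point—which it is, Proposition~\ref{pr:translation_bd} being stated for arbitrary $x_0$—and that the factor $n^{-1/2}$ from the rescaling exactly cancel the $\sqrt{nt}$ growth of the occupation-time integrals, leaving a bound in $t$ alone that vanishes at $t=0$; this cancellation is precisely where the $d=1$ decay rate $p_0(s,0,0),\,Z_{\beta,s}=O(s^{-1/2})$ is used.
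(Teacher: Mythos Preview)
Your proof is correct and follows exactly the route the paper intends: the paper simply states that the corollary ``follows from this proposition and Lemma~\ref{lem:tightness}'' without spelling out the verification, and you have supplied precisely those details --- checking hypothesis (i) via the identification of the law of $X^{(n)}_0$ with $\nu^{(n)}_1$, and hypothesis (ii) by rescaling Proposition~\ref{pr:translation_bd} and using the $O(s^{-1/2})$ decay of $P_0(X_s=0)$ and $Z_{\beta,s}$ to produce the uniform bound $f(t)=c_1\sqrt t$.
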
 
To prove Proposition \ref{pr:translation_bd} we will need the following: 
\begin{lemma}
\label{lem:coupling}
For every $x \in\Z_+$, there a exists coupling $(X,X')$ such that under  $Q_x$, the distribution of the  process $X'=\{X_t':t \ge 0\}$ coincides with the distribution of $X$ under $Q_0$, and  
$|X_t| \ge |X'_t|,~\mbox{ for all }t$, $Q_x$-a.s. 
\end{lemma}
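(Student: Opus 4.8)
The plan is to construct the coupling explicitly as a reflection-type coupling on $\Z$ adapted to the $\psi_\beta$-transformed dynamics, and then to verify the domination $|X_t|\ge|X_t'|$ is preserved in time. First I would recall from Theorem~\ref{th:psib_desc}-(ii) (for $d=1$) that $\psi_\beta(x)=1-\beta|x|$, so $\psi_\beta$ is symmetric about $0$, strictly increasing in $|x|$, and the generator $H_\beta^{\psi_\beta}$ of $Q$ has jump rates $r(x,y)=\tfrac12\,\psi_\beta(y)/\psi_\beta(x)$ for $|y-x|=1$; in particular, the process $|X|$ under $Q_x$ is itself a Markov chain on $\Z_+$ (by symmetry of $\psi_\beta$), with the rate from $k>0$ to $k+1$ being $\tfrac12\,\psi_\beta(k+1)/\psi_\beta(k)$ and from $k$ to $k-1$ being $\tfrac12\,\psi_\beta(k-1)/\psi_\beta(k)$, while from $0$ the rate to $1$ (i.e.\ to $\{-1,+1\}$ combined) is $\psi_\beta(1)/\psi_\beta(0)=1-\beta$. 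So it suffices to couple the $\Z_+$-valued ``modulus'' chains started from $x$ and from $0$ so that the one started higher stays weakly above.

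The key step is a monotone (synchronous/corner) coupling of the two modulus chains $Y=|X|$ and $Y'=|X'|$ on $\Z_+$. Because the upward rate $\tfrac12\psi_\beta(k+1)/\psi_\beta(k)=\tfrac12(1-\beta(k+1))/(1-\beta k)$ is nonincreasing in $k$ and the downward rate $\tfrac12\psi_\beta(k-1)/\psi_\beta(k)=\tfrac12(1-\beta(k-1))/(1-\beta k)$ is nondecreasing in $k$ for $\beta<0$ (this is the ``attractive''/stochastically monotone property I would check by a one-line computation), the birth--death chain $Y$ is stochastically monotone in its starting point. Hence there is a Markovian coupling under which $Y_t\ge Y_t'$ for all $t$ whenever $Y_0\ge Y_0'$: run both chains, using a common Poisson clock, and at each ring move the two coordinates in a way that never swaps their order — when $Y>Y'$ moves are made independently subject to the rate constraints (possible precisely because of monotonicity of the rates), and when $Y=Y'$ they move together. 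Standard generator/coupling arguments (e.g.\ checking that the coupled generator maps the ``ordered'' set into itself, or invoking the criterion for existence of an order-preserving coupling of stochastically monotone Markov chains) then give $Y_t\ge Y_t'$ for all $t$, $Q_x$-a.s. Finally, one lifts $Y'$ back to a $\Z$-valued process $X'$ distributed as $X$ under $Q_0$ by attaching an independent fair sign to each excursion of $Y'$ away from $0$ (using that $|X|$ determines the law of $X$ up to these i.i.d.\ signs, by symmetry of $\psi_\beta$); this produces the desired $(X,X')$ with $|X_t|=Y_t\ge Y_t'=|X_t'|$.

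The main obstacle I anticipate is the boundary behaviour at $0$: the modulus chain $Y$ has an irregular site at $0$ (the rate out of $0$ is $1-\beta$ rather than matching the interior formula, since a jump from $0$ to $\{\pm1\}$ has total rate $1$ in $X$ but is weighted by $\psi_\beta(1)/\psi_\beta(0)=1-\beta$ under the $h$-transform), and one must check that the monotone coupling does not break when $Y'$ hits $0$ while $Y>0$. I would handle this by noting that hitting $0$ only lowers $Y'$, so the ordering $Y\ge Y'$ can only be threatened when $Y'$ leaves $0$ upward; there the relevant comparison is between the up-rate $1-\beta$ from $0$ and the up-rate $\tfrac12(1-\beta(k+1))/(1-\beta k)$ from $k\ge1$, and since $1-\beta\ge \tfrac12(1-2\beta)\ge\tfrac12(1-\beta(k+1))/(1-\beta k)$ for all $k\ge1$ when $\beta<0$, the monotone coupling extends across the boundary without difficulty. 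A secondary, purely bookkeeping point is to make the excursion-sign lifting rigorous, i.e.\ to argue that the $\Z$-valued process obtained from $Y'$ plus independent fair excursion signs really has law $Q_0$; this follows from the strong Markov property of $X$ under $Q_0$ at the successive hitting times of $0$ together with the reflection symmetry $x\mapsto-x$ of $\psi_\beta$ and of the walk.
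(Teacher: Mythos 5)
Your argument is correct, but it takes a noticeably more roundabout route than the paper's. The paper's proof is a one-line reflection coupling carried out directly on $\Z$: run $X$ (from $x$) and $X'$ (from $0$) independently until the first time they either coincide or are mirror images ($X_t=-X'_t$); at that time let them coalesce or continue as mirror images, respectively. Since $|X_t|-|X'_t|$ is integer-valued with nearest-neighbor jumps and starts nonnegative, it cannot change sign without first hitting $0$, which is exactly the coincide-or-mirror event; after that time $|X_t|=|X'_t|$. The marginal of $X'$ is preserved by the strong Markov property together with the $x\mapsto -x$ symmetry of $\psi_\beta$ (hence of $Q$).

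You instead pass to the modulus chain $Y=|X|$ on $\Z_+$, verify a rate-monotonicity condition, invoke the general theory of order-preserving couplings for stochastically monotone chains, and then lift $Y'$ back to a $\Z$-valued process by randomizing excursion signs. Two remarks on this. First, the rate-monotonicity check you perform (up-rate nonincreasing, down-rate nondecreasing in $k$, plus the boundary comparison at $0$) is true but \emph{unnecessary}: for a nearest-neighbor birth--death chain on $\Z_+$, two copies started in order can always be coupled order-preservingly simply by running them independently until they meet and then coalescing, because the integer-valued difference cannot change sign without vanishing. So the stochastic-monotonicity machinery is overkill, and all the rate computations can be deleted. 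Second, the excursion-sign lifting at the end requires exactly the symmetry and strong-Markov argument that the paper uses; in effect your construction is the paper's coupling with the $\Z$-valued picture factored through its modulus and then reassembled. The paper's version buys brevity and avoids the reassembly step; yours makes the reduction to a $\Z_+$-valued comparison explicit, at the cost of extra scaffolding that turns out to be unneeded. Both are valid.
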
 
\begin{proof}
 Let $X'$ be independent of $X$ until  (which may never happen) they either meet, or are mirror images of each other. If they meet first, then they coalesce, while if they are mirror images of each other, then the continue as such. In either case, $|X_t|\ge |X'_t|$ for all $t$. 
\end{proof} 
\begin{proof}[Proof of Proposition \ref{pr:translation_bd}]
  Let ${\cal L} = H_{\beta}^{\psi_{\beta}}$. If  $f:\Z \to \R$ and $x\in \Z$, we have 
   $${\cal L} f (x) = \frac{\psi_{\beta}(x+1)}{2\psi_{\beta}(x)} (f (x+1) - f(x) ) + \frac {\psi_{\beta}(x-1)} {2\psi_{\beta}(x)} (f(x-1) - f(x)).$$

   To obtain the bound on the increment, fix $x_0 \ge 0$, and let $f(x) = |x-x_0|$. 
   
   When $x>x_0$, we have $f(x+1) - f (x) =1$ and $f(x-1)-f(x)=-1$. When $x<x_0$, the signs are changed, and when $x=x_0$, $f(x+1) -f(x)=f(x-1)-f(x) = 1$. Therefore, we have
  \begin{align*} {\cal L} f (x) 
    & =  \delta_{x_0} (x)  \frac {\psi_{\beta}(x_0+1) + \psi_{\beta}(x_0-1)}{ 2\psi_{\beta}(x)}\\
     &  +\frac{1}{ 2\psi_{\beta}(x)} (\psi_{\beta}(x+1) - \psi_{\beta}(x-1))\left ( \ch_{\{x>x_0\}} -\ch_{\{x<x_0\}})\right),
    \end{align*} 
    and since $\psi_{\beta} (x) = 1+ |\beta| |x|$, it follows that for all $x$, 
    \begin{align} 
    \nonumber 
    &\psi_{\beta} (x_0+1) + \psi_{\beta} (x_0-1)= 2\psi_{\beta} (x) + 2\beta \delta_0(x_0),\mbox{ and }\\
    \label{eq:psi_diff}& \psi_{\beta} (x+1) - \psi_{\beta} (x-1)  = 2|\beta| \sgn(x),
    \end{align}
    where, as usual, $\sgn (x) = 1$ if $x>1$, $\sgn(x)=-1$ if $x<0$, and $\sgn(x)=0$ if $x=0$.  Therefore  
     \begin{equation} 
     \label{eq:Lf_bd} 
    | {\cal L} f (x)| \le (1+ |\beta|\delta_0 (x_0))\delta_{x_0} (x)+\frac{|\beta|}{\psi_{\beta} (x)} \ch_{\{x\ne x_0\}}.
     \end{equation} 
     Next, since ${\cal L}$ is the generator of $X$ under $Q$, it follows that for any bounded function $g$, with ${\cal L}g$ bounded, 
     $ g(X_t) - \int_0^t {\cal L} g (X_s) ds$ is a $Q$-martingale. In particular, $g_N = \min (f,N)$ is clearly such a function and if we let $$\tau_N=\inf\{t\ge 0: |X_t - x_0| > N-2\},$$ then 
      $$E^Q_{x_0} f (X_{t \wedge \tau_N}) = E^Q_{x_0}  g_N (X_{t \wedge \tau_N} ) =E^Q_{x_0}  \int_0^{t\wedge \tau_N}  {\cal L} g_N(X_s) ds= E^Q_{x_0} \int_0^{t \wedge \tau_N} {\cal L} f (X_s) ds.$$      
      Now $f (X_{t\wedge \tau_N}) \to f (X_{t-})$, and  it follows from Fatou's lemma that 
      $$ E^Q_{x_0} f (X_{t-}) \le \lim_{N\to\infty} E^Q_{x_0}  \int_0^{t \wedge \tau_N} {\cal L} f (X_s) ds.$$ 
      As from \eqref{eq:Lf_bd} is ${\cal L} f $ is uniformly bounded, it follows from right-continuity of the paths and dominated convergence that the right-hand side converges to $$E^Q_{x_0} \int_0^t {\cal L} f (X_s) ds,$$ so that 
       $$  E^Q_{x_0} f (X_{t-})  \le E^Q_{x_0} \int_0^t {\cal L} f (X_s) ds.$$ 
       Replacing $t$ by $t+\epsilon$ and letting $\epsilon\ge 0$, it follows again from Fatou's lemma, that 
       $$ E^Q_{x_0} f (X_t) \le E^Q_{x_0} \int_0^t {\cal L} f (X_s)ds.$$ 
     From this, the definition of $f$ and \eqref{eq:Lf_bd} we obtain the upper bound: 
     \begin{align} \nonumber E_{x_0}^Q |X_t -x_0| &\le \int_0^t (1+\delta_0 (x_0) |\beta|)Q_{x_0} (X_s = x_0)+|\beta|  E^Q_{x_0} [\frac{1}{\psi_{\beta}(X_s)};X_s \ne x_0]ds\\
     \nonumber
      & = \int_0^t (1+\delta_0(x_0) |\beta| ) p_{\beta}(s,x_0,x_0) \\
   \nonumber    &\quad\quad+\frac{ |\beta|}{\psi_{\beta}(x_0)}E_{x_0} [ e^{\beta \I (s) }(1-\delta_{x_0}) (X_s) ] ds\\
      \nonumber 
      & = \int_0^t(1+\delta_0(x_0) |\beta|  - \frac{|\beta| }{ \psi_{\beta}(x_0)}) p_{\beta}(s,x_0,x_0) + \frac {|\beta|}{\psi_{\beta}(x_0)} Z_{\beta,s} (x_0)ds\\
      \label{eq:distance}
      &\le  \int_0^t P_0 (W_s = 0) ds + \frac{|\beta|} {\psi_{\beta}(x_0)}\int_0^t Z_{\beta,s} (x_0) ds.
      \end{align}   
      Observe that $Z_{\beta,t} (x_0) = E_{x_0} [ e^{\beta \I(t)} ]=\psi_{\beta} (x_0) E^Q_{x_0} \frac{1}{\psi_{\beta}(X_t)}$. From Lemma \ref{lem:coupling} it follows that $E^Q_{x_0} \frac{1}{\psi_{\beta}(X_t)} \le E^Q_0   \frac{1}{\psi_{\beta}(X_t)}= Z_{\beta,t}$. Thus, 
      $ Z_{\beta,t} (x) \le \psi_{\beta}(x) Z_{\beta,t}$, and so the result follows from this and the right-hand side of \eqref{eq:distance} 
       \end{proof} 
       Let $Y^{(n)}$ be the process defined through  $Y^{(n)}_t = (X^{(n)}_t)^2$. 
     In order to simplify some of the arguments and in particular avoid convergence to a diffusion with singular drift, we will work with $Y^{(n)}$  rather than $X^{(n)}$ itself. We first need to show that the tightness of $X^{(n)}$ is preserved under the square map.
  \begin{prop}\label{pr:continuous}
The map $x\mapsto x^2$ from $D[0,1]$ into $D[0,1]$ where $x^2$ is defined by $x^2(t)=(x(t))^2$ is continuous in the Skorokhod topology.
\end{prop}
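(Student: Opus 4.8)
The plan is to reduce the claim to \emph{sequential} continuity, which is legitimate because $D[0,1]$ equipped with the Skorokhod $J_1$ topology is metrizable, and then to exploit the standard time-change description of Skorokhod convergence together with one elementary observation: a convergent sequence in $D[0,1]$ is uniformly bounded.

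Concretely, I would start from a sequence $x_n\to x$ in $D[0,1]$ and first record that $x^2\in D[0,1]$, since right-continuity and the existence of left limits are preserved under composition with the continuous map $u\mapsto u^2$; thus $x\mapsto x^2$ genuinely maps $D[0,1]$ into $D[0,1]$. Next I would choose increasing homeomorphisms $\lambda_n$ of $[0,1]$ with $\sup_t|\lambda_n(t)-t|\to 0$ and $\sup_t|x_n(\lambda_n(t))-x(t)|\to 0$, and note that, since each $x_n\in D[0,1]$ is bounded and, for all large $n$, $\sup_t|x_n(\lambda_n(t))|\le\sup_t|x(t)|+1$ while $\sup_t|x_n(\lambda_n(t))|=\sup_t|x_n(t)|$ (because $\lambda_n$ is a bijection of $[0,1]$), the constant $M:=\sup_n\sup_t|x_n(t)|$ is finite, and moreover $\sup_t|x(t)|\le M$.

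The conclusion then follows by using the \emph{same} time changes $\lambda_n$ for $x_n^2$: since $x_n^2(\lambda_n(t))=\bigl(x_n(\lambda_n(t))\bigr)^2$ and $a^2-b^2=(a-b)(a+b)$ with $|a|,|b|\le M$,
\begin{align*}
\sup_{t\in[0,1]}\bigl|x_n^2(\lambda_n(t))-x^2(t)\bigr|\le 2M\,\sup_{t\in[0,1]}\bigl|x_n(\lambda_n(t))-x(t)\bigr|\longrightarrow 0,
\end{align*}
and combined with $\sup_t|\lambda_n(t)-t|\to 0$ this gives $x_n^2\to x^2$ in $D[0,1]$, proving sequential (hence topological) continuity.

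The only subtle point — the ``main obstacle,'' modest as it is — is that $u\mapsto u^2$ is not globally Lipschitz, so one cannot simply quote the general fact that post-composition with a Lipschitz function is Skorokhod-continuous; the uniform boundedness of a convergent sequence is exactly what is needed to replace $u\mapsto u^2$ by its restriction to the compact interval $[-M,M]$, on which it is Lipschitz with constant $2M$.
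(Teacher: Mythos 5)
Your argument is correct and follows essentially the same line as the paper's: both rely on the time-change description of Skorokhod convergence, the identity $x^2\circ\lambda=(x\circ\lambda)^2$, and the local Lipschitz bound $|a^2-b^2|\le(|a|+|b|)|a-b|$ together with the uniform boundedness of a Skorokhod-convergent sequence. The paper packages the same observations as a single quantitative estimate $\rho(x^2,y^2)\le(1+\|x\|+\|y\|)\,\rho(x,y)$ rather than phrasing the argument sequentially, but the mathematical content is identical.
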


\begin{proof}
Recall that the Skorokhod topology can be generated by the metric $$\rho(x,y)=\inf_{\lambda\in\Lambda}\{\|\lambda-I\|\vee \|x\circ\lambda-y\|\}$$ where $\Lambda$ is the set of all continuous, strictly increasing functions from $[0,1]$ onto $[0,1]$, $I$ is the identity function on $[0,1]$, and $\|\cdot\|$ is the usual sup norm. If $x,y\in D[0,1]$, a straightforward calculation shows that $\|x^2-y^2\|\leq \|x-y\|(\|x\|+\|y\|)$. Now $ x^2 \circ \lambda= (x \circ \lambda)^2$, therefore  $\|(x^2 \circ\lambda) -y^2\| \leq \|x\circ\lambda-y\|(\|x\|+\|y\|)$.  This gives: 
$$\rho(x^2,y^2)\leq \inf_{\lambda\in\Lambda}\{\|\lambda-I\|\vee \|x\circ\lambda-y\|(\|x\|+\|y\|)\}\leq (1+\|x\|+\|y\|)\rho(x,y),$$
and the result follows.
\end{proof}
From this we obtain the analog of Corollary \ref{cor:XQ_tight} for $Y^{(n)}$. Before stating the result, observe that if $\nu_n$ is an initial distribution for $X$, then in analogy to \eqref{eq:nu1}, it  induces an initial distribution $\nu^{(n)}_2$  for $Y^{(n)}$,  a Borel measure on $\R_+$, given by the following relation: 
\begin{equation} 
\label{eq:nu2} 
\nu^{(n)}_2(A) = \nu^{(1)}_n ( \{ x \in \R : x^2 \in A\}) = \sum_{\{z \in \Z: z^2/ n \in A\}} \nu_n (z).
\end{equation} 
\begin{corollary}
\label{eq:tightness_squared} 
Let $\{\nu_n:n\in\N\}$ be an sequence of initial distributions on $\Z$ such that $\{\nu^{(n)}_2:n\in\N\}$ is  tight. Then the family $\{Q_{\nu_n} (Y^{(n)} \in \cdot) : n\in\N\}$ is tight.  \end{corollary}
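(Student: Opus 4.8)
The plan is to reduce everything to Corollary \ref{cor:XQ_tight} via the squaring map. First I would record the elementary fact that, by \eqref{eq:nu1} and \eqref{eq:nu2}, the measure $\nu^{(n)}_2$ on $\R_+$ is precisely the pushforward of $\nu^{(n)}_1$ under the map $x\mapsto x^2$ from $\R$ onto $\R_+$; concretely, $\nu^{(n)}_2([0,M]) = \nu^{(n)}_1(\{x\in\R:x^2\le M\}) = \nu^{(n)}_1([-\sqrt{M},\sqrt{M}])$ for every $M>0$. Since $x\mapsto x^2$ pulls back compact subsets of $\R_+$ to compact subsets of $\R$, tightness of $\{\nu^{(n)}_2:n\in\N\}$ forces tightness of $\{\nu^{(n)}_1:n\in\N\}$: given $\epsilon>0$, pick $M$ with $\inf_n\nu^{(n)}_2([0,M])\ge 1-\epsilon$ and then $\inf_n\nu^{(n)}_1([-\sqrt{M},\sqrt{M}])\ge 1-\epsilon$.

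Next I would apply Corollary \ref{cor:XQ_tight} to conclude that $\{Q_{\nu_n}(X^{(n)}\in\cdot):n\in\N\}$ is tight in $D[0,1]$. Then, for each $\epsilon>0$, choose a compact set $K_\epsilon\subset D[0,1]$ with $\inf_n Q_{\nu_n}(X^{(n)}\in K_\epsilon)\ge 1-\epsilon$. By Proposition \ref{pr:continuous} the squaring map $s\colon D[0,1]\to D[0,1]$ given by $s(x)(t)=(x(t))^2$ is continuous in the Skorokhod topology, so $s(K_\epsilon)$ is a compact subset of $D[0,1]$. Since $Y^{(n)}=s(X^{(n)})$ pathwise, we have the inclusion of events $\{X^{(n)}\in K_\epsilon\}\subseteq\{Y^{(n)}\in s(K_\epsilon)\}$, and therefore $\inf_n Q_{\nu_n}(Y^{(n)}\in s(K_\epsilon))\ge 1-\epsilon$. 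As $\epsilon>0$ was arbitrary, $\{Q_{\nu_n}(Y^{(n)}\in\cdot):n\in\N\}$ is tight, which is the claim.

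There is no genuinely hard step here; the one point to handle with a little care is the opening reduction, namely that tightness is inherited \emph{backwards} along $x\mapsto x^2$ because this map is proper on $\R$ (preimages of compacts are compact). This properness is the reason the hypothesis is naturally phrased through $\nu^{(n)}_2$ rather than directly through $\nu^{(n)}_1$. Everything else is just the continuous-mapping principle for tightness (compact sets map to compact sets under a continuous map) combined with the already-established Proposition \ref{pr:continuous} and Corollary \ref{cor:XQ_tight}.
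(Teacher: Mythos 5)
Your proof is correct and follows essentially the same route as the paper: invoke Corollary \ref{cor:XQ_tight} to get tightness of $\{Q_{\nu_n}(X^{(n)}\in\cdot)\}$, then push a compact set forward under the squaring map of Proposition \ref{pr:continuous}. The one thing you do that the paper leaves tacit is the opening reduction, noting that tightness of $\{\nu^{(n)}_2\}$ implies tightness of $\{\nu^{(n)}_1\}$ because $x\mapsto x^2$ is proper; the paper's proof jumps straight to a compact $K$ with $Q_{\nu_n}(X^{(n)}\in K)>1-\epsilon$ without justifying that the hypothesis of Corollary \ref{cor:XQ_tight} is met, so your version is a slightly more complete rendering of the same argument.
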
 
\begin{proof}
 Let $F$ denote the mapping $x\to x^2$ from Proposition \ref{pr:continuous}. Fix $\epsilon>0$ such that $Q(X^{(t)} \in K )>1-\epsilon$ for all $t$. But since $F$ is continuous, $F(K)$ is continuous, and it follows that $ Q ( F (X^{(t)}) \in F (K)) \ge Q(X^{(t)} \in K )>1-\epsilon$.
\end{proof}
\subsubsection{Martingale Problem} 
\label{sec:martigale_prob}
Define the second order differential operator ${\cal L}^{(\infty)}$ through 
$${\cal L}^{(\infty)} g (y) =2y g''(y) + 3 g'(y).$$ 
Also, for $t \ge 0$ and $n\in\N$, let 
$${\cal G}^{(n)}_t=\sigma(X^{(n)}_s: s\le t) = \sigma (X_{s}:s\le nt).$$
We have
\begin{lemma}
 Let $g \in C_c^\infty(\R)$, and $t>s>0$ and $A \in {\cal G}^{(n)}_s$. Then 
  $$E^Q_{\nu_n} \left [ g (Y^{(n)}_t) - \int_0^t {\cal L}^{(\infty)} g (Y^{(n)}_s) ds; A \right]  = o(n^{-1/2}).$$
\end{lemma}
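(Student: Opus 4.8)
The plan is to obtain the estimate from Dynkin's formula for the chain $X$ run under $Q$, and then to compute exactly how far the true generator of the rescaled, squared process is from ${\cal L}^{(\infty)}$. The whole content of the statement concentrates in a single ``cross term'', which is the part I expect to be delicate.

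\emph{Step 1 (Dynkin reduction).} Set $\phi_n(x)=x^2/n$, so that $Y^{(n)}_t=\phi_n(X_{nt})$. The generator $H_\beta^{\psi_\beta}$ of $X$ under $Q$ has jump rates $\psi_\beta(x\pm1)/(2\psi_\beta(x))\le 1+|\beta|$, so for the bounded function $g\circ\phi_n$ the process $(g\circ\phi_n)(X_r)-(g\circ\phi_n)(X_0)-\int_0^r H_\beta^{\psi_\beta}(g\circ\phi_n)(X_u)\,du$ is a $Q$-martingale. Taking the increment from $ns$ to $nt$, substituting $u=nv$, and using $A\in{\cal G}^{(n)}_s$, one sees that the quantity in the statement equals
$$E^Q_{\nu_n}\Big[\int_s^t\big(n\,H_\beta^{\psi_\beta}(g\circ\phi_n)(X_{nv})-{\cal L}^{(\infty)}g(Y^{(n)}_v)\big)\,dv\,;\,A\Big],$$
so it suffices to prove $\int_s^t E^Q_{\nu_n}\big|\,n\,H_\beta^{\psi_\beta}(g\circ\phi_n)(X_{nv})-{\cal L}^{(\infty)}g(Y^{(n)}_v)\,\big|\,dv=o(n^{-1/2})$.

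\emph{Step 2 (generator expansion).} Since $\phi_n(x\pm1)-\phi_n(x)=(\pm2x+1)/n$, I would Taylor expand $g$ to third order about $\phi_n(x)$ with a fourth-order remainder and feed in $\psi_\beta(x)=1+|\beta||x|$ together with the identities $\psi_\beta(x+1)-\psi_\beta(x-1)=2|\beta|\sgn(x)$ and $\psi_\beta(x+1)+\psi_\beta(x-1)=2\psi_\beta(x)+2|\beta|\delta_0(x)$ of \eqref{eq:psi_diff}. Collecting terms yields
$$n\,H_\beta^{\psi_\beta}(g\circ\phi_n)(x)={\cal L}^{(\infty)}g(\phi_n(x))-\frac{2}{\psi_\beta(x)}\,g'(\phi_n(x))+|\beta|\,g'(0)\,\delta_0(x)+\mathcal E_n(x),$$
where $\mathcal E_n$ collects the remaining pieces and, for $n$ large, vanishes off the set $\{x:|x|=O(\sqrt n)\}$ on which $\phi_n(x)$ is $O(1)$, where it obeys $|\mathcal E_n(x)|\le C/n$ with $C$ depending only on $\beta$ and on finitely many sup-norms of $g$. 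The two elementary facts that make this work are that the cubic Taylor term is $\le C\,\phi_n(x)\,\|g'''\|_\infty/n$, because $|\beta||x|^3/\psi_\beta(x)\le|x|^2=n\phi_n(x)$, and that the quartic remainder (like the exact second-order corrections beyond $2\phi_n(x)g''$) is $\le C(1+|x|)^4/n^3=O(1/n)$ on that set.

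\emph{Step 3 (the easy corrections, and the obstacle).} Integrating $|\mathcal E_n(X_{nv})|$ contributes $O(1/n)$. For the $\delta_0$ term, $\int_s^t E^Q_{\nu_n}[|\beta||g'(0)|\delta_0(X_{nv})]\,dv=|\beta||g'(0)|\int_s^t Q_{\nu_n}(X_{nv}=0)\,dv$; writing $Q_{\nu_n}(X_{nv}=0)=\sum_x\nu_n(x)p_\beta(nv,x,0)/\psi_\beta(x)$ and using Cauchy--Schwarz for the reversible kernel, $p_\beta(r,x,0)\le\sqrt{p_\beta(r,x,x)\,p_\beta(r,0,0)}\le\sqrt{p_0(r,0,0)\,p_\beta(r,0,0)}$, which by $p_0(r,0,0)=O(r^{-1/2})$ and Proposition~\ref{pr:pbeta00} ($p_\beta(r,0,0)=O(r^{-3/2})$) is $O(r^{-1})$, one gets $Q_{\nu_n}(X_{nv}=0)=O((nv)^{-1})$ and hence this term is $O(1/n)=o(n^{-1/2})$; both bounds are uniform in $\nu_n$. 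The decisive remaining term is $\int_s^t E^Q_{\nu_n}\big[\frac{2}{\psi_\beta(X_{nv})}g'(Y^{(n)}_v)\big]\,dv$, and this is the main obstacle. The direct estimate uses $E^Q_x[1/\psi_\beta(X_r)]=Z_{\beta,r}(x)/\psi_\beta(x)\le Z_{\beta,r}$ (the coupling comparison from the proof of Proposition~\ref{pr:translation_bd}), bounding the integrand by $2\|g'\|_\infty Z_{\beta,nv}$ and giving, via Theorem~\ref{th:Zbeta}(i), only $O(n^{-1/2})$; squeezing out the extra $o(1)$ is where the real work is. The natural route is to exploit the compact support of $g'$: on $\{g'(Y^{(n)}_v)\ne0\}$ the variable $X_{nv}^2/n$ lies in a fixed compact set, so $\psi_\beta(X_{nv})\gtrsim\sqrt n$ except on $\{|X_{nv}|\le\epsilon\sqrt n\}$ (which is again handled by the local-limit bound of Proposition~\ref{pr:pbeta00}), and one combines this with the sharp $r^{-1/2}$ rate of Theorem~\ref{th:Zbeta} and the escape/tightness bounds of Section~\ref{sec:tightness}, letting $\epsilon\downarrow0$ after $n\to\infty$. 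Everything outside this cross term is routine Taylor book-keeping; it is worth noting that this is precisely the term which, if retained rather than shown negligible, would reproduce the $1/R_1$ weight of the Imhof relation \eqref{eq:imhof} underlying the passage from \eqref{eq:Q_pol} to the meander limit.
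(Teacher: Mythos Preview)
Your approach is the paper's: write the exact $Q$-martingale for $g\circ\phi_n$ applied to the chain, Taylor-expand to identify ${\cal L}^{(\infty)}g$ plus correction terms, and bound the corrections in $L^1$ uniformly in the initial law. The paper sets $f(x)=g(x^2)$ and expands $f(\cdot/\sqrt n)$ to third order; you expand $g(\cdot^2/n)$ to fourth order, which lets you extract $|\mathcal E_n|\le C/n$ on the support via the neat inequality $|\beta||x|^3/\psi_\beta(x)\le |x|^2=n\phi_n(x)$, sharper than the paper's $O(n^{-1/2})$ from a third-order remainder. The $\delta_0$ piece is handled by the same mechanism (transience of $X$ under $Q$; the paper simply quotes $\int_0^\infty Q_0(X_s=0)\,ds<\infty$), and your cross term $-2g'(\phi_n(x))/\psi_\beta(x)$ is exactly the paper's $E^{(n)}_2$.

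There is one genuine point: you cannot get $o(n^{-1/2})$ for the cross term, and your sketch for doing so does not work. With $\nu_n=\delta_0$ one has, by the very scaling limit the lemma is meant to feed into,
$$\sqrt n\,E^Q_0\Big[\frac{g'(Y^{(n)}_v)}{\psi_\beta(X_{nv})}\Big]\longrightarrow\frac{1}{|\beta|}\,E\Big[\frac{g'(R_v^2)}{R_v}\Big],$$
a nonzero constant for generic $g$; the term is $\Theta(n^{-1/2})$, and no $\epsilon$-splitting of $\{|X_{nv}|\le\epsilon\sqrt n\}$ repairs this. (Your parenthetical about the Imhof weight is exactly why: this term \emph{is} the discrete $1/R_1$ weight, and it does not vanish.) This is not your fault: the paper's own proof concludes with $\sup_\nu E^Q_\nu|E^{(n)}_t|=O(n^{-1/2})$, not $o(n^{-1/2})$, and $O(n^{-1/2})=o(1)$ is all the Ethier--Kurtz martingale-problem machinery requires. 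The stated rate is a typo.

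A minor slip in Step~1: the Dynkin increment gives
$$E^Q_{\nu_n}\Big[g(Y^{(n)}_t)-g(Y^{(n)}_s)-\int_s^t{\cal L}^{(\infty)}g(Y^{(n)}_v)\,dv\,;\,A\Big]=E^Q_{\nu_n}\Big[\int_s^t(\cdots)\,dv\,;\,A\Big],$$
not the quantity in the statement (which has $\int_0^t$ and no $g(Y^{(n)}_s)$ subtracted). The paper's concluding display has the same inconsistency; the increment version is what is actually needed and used.
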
 
\begin{proof} 
Given any initial distribution $\nu$ for $X$, 
 $ f(X_t) - \int_0^t {\cal L} f (X_s) ds$ is a $Q_\nu$-martingale.  We will convert this statement into the family of processes $\{X^{(n)}\}_{n\in\N} $.  We can write  $f (X^{(n)}_t) = h (X_{nt})$, where $h(z) =   f (z / \sqrt{n}) $. Then 
 $$M^{(n)}_t =  f (X^{(n)}_t) - \int_0^{nt} {\cal L} h (X_s) ds$$  is a ${\cal G}^{(n)}_t$-martingale under $Q_{\nu}$.  
  Now ${\cal L} h  (z) = \frac{1}{\psi_\beta (x)} \left ( E (\psi_{\beta}h) (x+Z) -  (\psi_{\beta}h)(x)\right) +\beta \delta_0 h(x) $, where $Z$ is $1$ with probability $\frac 12$ and $-1$ with probability $\frac 12$.
  From the Taylor expansion of $f$, we conclude that 
  \begin{align*} 
  &(\psi_{\beta}h) (x+Z) = \psi_{\beta}(x+Z) \\
  &\quad\times  \left ( f (x/\sqrt{n}) + f (x/\sqrt{n}) \frac{Z}{\sqrt{n}} + f'' (x/\sqrt{n}) \frac{1}{2n} +\frac 16 f^{(3)}(x'/\sqrt{n})\frac{Z}{n^{3/2}}\right).
  \end{align*}
   where $x' \in (x-1/\sqrt{n},x+\sqrt{n})$. Since $E \psi_{\beta} (x+Z) - \psi_{\beta}(x)+\beta \delta_0 =0$, it follows that 
   $${\cal L} h (x) = E \frac{\psi_{\beta} (x+Z)}{\psi_\beta(x)}\left ( f'(x/\sqrt{n}) \frac{Z}{n} + f''(x/\sqrt{n}) \frac{1}{2n}\right) + D^{(n)}_1(x),$$
   where (slightly abusing notation, note that here $x'$ is a random variable which is a function of $Z$ and $n$):
    $$D^{(n)}_1(x)=n^{-3/2}  E[  \psi_{\beta}(x+Z)Z  \frac 16 f^{(3)}(x'/\sqrt{n})],$$
    and in particular,   
    \begin{equation} 
    \label{eq:D1} 
    |D^{(n)}_1(x) |  \le cn^{-3/2}  \|f^{(3)}\|_\infty,
    \end{equation}  where  $c$ is a universal constant. 
    From   \eqref{eq:psi_diff}, we have that $E[ \psi_{\beta} (x+Z) Z] = |\beta| \sgn(x) $, so that
 $$ {\cal L} h (x) = \frac 1{2n}  f'' (x/\sqrt{n}) + f'(x/\sqrt{n}) \frac{|\beta| \sgn (x)}{\sqrt{n} \psi_{\beta}(x)} +D^{(n)}_1(x)+D^{(n)}_2(x), $$
 where 
 $$ D^{(n)}_2(x) = \frac 1{2n}  f'' (0)  |\beta| \delta_0 (x).$$ 
 Therefore 
 \begin{align*} 
 M^{(n)}_t & =  f (X^{(n)}_t) - \int_0^{nt}  \frac{1}{2n
 } f'' (X_s /\sqrt{n} ) +  f'(X_s/\sqrt{n}) \frac{|\beta| \sgn (X_s)}{\sqrt{n} \psi_{\beta}(X_s)} ds - E^{(n)}_1(t)\\
  & = f (X^{(n)}_t) - \int_0^{t}  \frac{1}{2} f'' (X^{(n)}_s)  + f'(X^{(n)}_s) \frac{\sqrt{n}|\beta| \sgn (X^{(n)}_s)}{ \psi_{\beta}(X_{ns})} ds- E^{(n)}_1(t),
   \end{align*} 
where 
   $$ E^{(n)}_1 (t) = \int_0^{nt} D^{(n)}_1(X_s) ds+ \int_0^{nt} D^{(n)}_2(X_s) ds.$$ 
  Observe that 
  $$ E^Q_{\nu} |E^{(n)}_1 (t)| \le  c n^{-1/2} \|f^{(3)}\|_\infty + |\beta| \frac{|f''(0)|}{n}  \int_0^{nt} Q_{\nu} (X_s =0)ds.$$ 
From Lemma \ref{lem:coupling},  $Q_\nu (X_s=0) \le Q_0(X_s=0)$ and since $X$ is transient under $Q_0$, the integral is finite, therefore 
  $$ \sup_{\nu}  E^Q_{\nu} |E^{(n)}_1 (t)|  \le c(f,\beta) n^{-1/2}.$$ 
  In order to simply the drift expression, and prove the lemma,  we will take  $f= g(x^2)$. Observe then that $f'(x) = 2x g'(x^2)$ and that $\frac 12 f''(x) =2x^2   g''(x^2)+  g'(x^2)$. Letting $Y^{(n)}=(X^{(n)})^2$, we obtain 
    \begin{align*}  M^{(n)}_t & = g (Y^{(n)}_t) - \int_0^t 2 Y^{(n)}_s g''(Y^{(n)}_s) +  g'(Y^{(n)}_s) +  g'(Y^{(n)}_s) \frac{2|\beta X_{ns}|}{ \psi_{\beta}(X_{ns})} ds- E^{(n)}_1(t).\\
     & = g (Y^{(n)}_t) - \int_0^t 2 Y^{(n)}_s g''(Y^{(n)}_s) +   3 g'(Y^{(n)}_s) ds - E^{(n)}_1(t)-E^{(n)}_2(t),
    \end{align*} 
   where 
   $$ E^{(n)}_2(t) = 2 \int_0^t g'(Y^{(n)}_s) \left ( \frac{|\beta X_{ns}|}{ \psi_{\beta}(X_{ns}) }-1 \right) ds .$$ 
   Observe that 
   $$ |E^{(n)}_2(t)|\le  \|g'\|_\infty\int_0^{t}  \frac{1}{\psi_{\beta}(X_{ns})^2}ds.$$  
   Now 
   $$ E^Q_{\nu} |E^{(n)}_2(t)| \le \|g'\|_{\infty} \int_0^{t} E^Q_{\nu} \frac{1}{\psi_{\beta}(X_{ns})^2} ds.$$
    Lemma \ref{lem:coupling} gives  $ E^Q_{\nu} \frac{1}{\psi_{\beta}(X_{ns})^2} \le E^Q_0 \frac{1}{\psi_{\beta}(X_{ns})^2}\le E^Q_0 \frac{1}{\psi_{\beta}(X_{ns})} = Z_{\beta,ns}$, and therefore 
    $ E^Q_{\nu} |E^{(n)}_2(t) | \le \|g'\|_{\infty} \int_0^t  Z_{\beta,ns} ds$. For $u>1$, $Z_{\beta,u} \le \frac{c}{\sqrt{u}}$, therefore 
    $$ \int_0^t Z_{\beta,ns} ds \le \frac{1}{\sqrt{n}} +\int_{1/\sqrt{n}}^1 \frac{c}{\sqrt{ns}}ds \le \frac{c}{\sqrt{n}}, $$
    and the constant $c$ depends only on $\beta$.    It follows that $\sup_{\nu}E^Q_{\nu} |E^{(n)}_2(t)|\le\|g'\|_\infty \frac{c}{\sqrt{n}}$. Writing $E^{(n)}_t = E^{(n)}_1(t) + E^{(n)}_2(t)$, then $\sup_{\nu} E^{Q}_\nu |E^{(n)}_t|= O(n^{-1/2})$, and since $M^{(n)}_t$ is a ${\cal G}^{(n)}_t$ martingale under $Q_{\nu_n}$, we have that 
        \begin{align*} 
    0&=  E^Q_{\nu_n} [ M^{(n)}_t; A] \\
     &  = E^Q_{\nu_n} [ g (Y^{(n)}_t) - \int_0^t {\cal L}^{(\infty)} g (Y_s) ds; A]\\
     & ~~~~-E^Q_{\nu_n} [ E^{(n)}_t \ch_A ],
     \end{align*}
    and the result follows. 
    \end{proof}
    \subsubsection{Convergence of Markov Chains} 

We fix some notation. Let $R^2$ denote the process $\{R_t^2:t\ge 0\}$, where $R$  the Bessel-$3$ process introduced in Section \ref{sec:scaling}. We also write  $(Z,\nu)$ for a Markov chain the Markov process $Z$ with initial distribution $\nu$. 

From \cite[Theorem 8.2.3, p. 372]{EK} We first observe that the martingale problem for ${\cal L}^{(\infty)}$ is well-posed. It is also well-known that ${\cal L}^{(\infty)}$ is the generator of $R^2$.  The following is an immediate consequence of  \cite[Theorem 4.8.10 , p 234]{EK}.
\begin{prop}
\label{pr:Y2_conv}
Suppose that  $\{\nu_n:n\in\N\} $   is such that $ \nu^{(n)}_2\Rightarrow \nu^{(\infty)}_2$ for some Borel probability measure $\nu^{(\infty)}_2$ on $\R_+$.  Then 
$$Q_{\nu_n} ( Y^{(n)} \in \cdot ) \Rightarrow R^2,$$ 
and the process on the righthand side has initial distribution $\nu^{(\infty)}_2$. 
\end{prop}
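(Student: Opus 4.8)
The plan is to verify the two hypotheses of the standard convergence theorem \cite[Theorem 4.8.10, p.~234]{EK} for convergence of a sequence of Markov processes to a limiting Markov process characterized by a well-posed martingale problem. Since the previous subsection already establishes that the martingale problem for ${\cal L}^{(\infty)}$ is well-posed and that ${\cal L}^{(\infty)}$ generates $R^2$, the content of the proof is to package the estimate from the Martingale Problem Lemma into the form required by that theorem, together with the tightness supplied by Corollary~\ref{eq:tightness_squared}. Concretely, the plan is: first, observe that the hypothesis $\nu^{(n)}_2 \Rightarrow \nu^{(\infty)}_2$ implies in particular that $\{\nu^{(n)}_2 : n \in \N\}$ is tight, so by Corollary~\ref{eq:tightness_squared} the laws $\{Q_{\nu_n}(Y^{(n)} \in \cdot) : n \in \N\}$ form a tight (hence relatively compact) sequence in the space of probability measures on $D[0,1]$.

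Second, I would argue that every subsequential limit $\mu$ of $Q_{\nu_n}(Y^{(n)} \in \cdot)$ is a solution of the martingale problem for ${\cal L}^{(\infty)}$ started from $\nu^{(\infty)}_2$. For this, fix $g \in C_c^\infty(\R)$, times $0 < s < t$, and a bounded continuous functional $\Psi$ on $D[0,s]$ depending only on the path up to time $s$. The Martingale Problem Lemma gives
$$ E^Q_{\nu_n}\left[ \left( g(Y^{(n)}_t) - g(Y^{(n)}_s) - \int_s^t {\cal L}^{(\infty)} g(Y^{(n)}_r)\, dr \right) \Psi \right] = o(n^{-1/2}), $$
where one passes from the $A \in {\cal G}^{(n)}_s$ version to the functional version by approximating $\Psi$ by ${\cal G}^{(n)}_s$-measurable simple functions (note $\Psi(X^{(n)}) $ is ${\cal G}^{(n)}_s$-measurable). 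Passing to the convergent subsequence and using the continuity of the map $x \mapsto \big(g(x(t)) - g(x(s)) - \int_s^t {\cal L}^{(\infty)} g(x(r))\, dr\big)\Psi(x)$ at $\mu$-a.e.\ path (this requires a brief remark that, since $R^2$ has continuous paths, $\mu$ is concentrated on $C[0,1]$, so the evaluation maps $x \mapsto x(t)$ are $\mu$-a.s.\ continuous and the integral term converges by bounded convergence), one concludes that the limit of the left side is
$$ E^\mu\left[ \left( g(Y_t) - g(Y_s) - \int_s^t {\cal L}^{(\infty)} g(Y_r)\, dr \right) \Psi \right] = 0, $$
which is exactly the martingale property. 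The initial distribution of $\mu$ is $\nu^{(\infty)}_2$ because $Y^{(n)}_0$ has law $\nu^{(n)}_2 \Rightarrow \nu^{(\infty)}_2$.

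Third, since the martingale problem for ${\cal L}^{(\infty)}$ started from $\nu^{(\infty)}_2$ is well-posed, there is a unique such $\mu$, namely the law of $R^2$ with $R_0^2 \sim \nu^{(\infty)}_2$. Hence every subsequential limit equals this law, and by the tightness established in the first step the full sequence converges: $Q_{\nu_n}(Y^{(n)} \in \cdot) \Rightarrow R^2$ with the stated initial distribution. I expect the main obstacle to be purely bookkeeping rather than conceptual: one must be careful that the $o(n^{-1/2})$ error in the Martingale Problem Lemma is uniform enough to survive multiplication by the bounded functional $\Psi$ and the passage from cylinder sets to general bounded continuous $\Psi$, and one must justify the interchange of limit and time-integral in the drift term, which is where the a.s.\ continuity of limit paths (inherited from $R^2$) is used. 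All of this is standard once the lemma and the tightness corollary are in hand, so the proof is short.
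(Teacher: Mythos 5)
The paper's proof of this proposition is a single line: it simply cites \cite[Theorem 4.8.10, p.~234]{EK}, with the well-posedness of the martingale problem for ${\cal L}^{(\infty)}$ and the identification of ${\cal L}^{(\infty)}$ as the generator of $R^2$ already recorded in the paragraph preceding the statement. Your proposal unpacks what that citation does internally---tightness, identification of subsequential limits as solutions of the martingale problem, and uniqueness---so in substance it is the same approach, just carried out by hand rather than by invoking the black box. The passage from indicators of $A\in{\cal G}^{(n)}_s$ to bounded ${\cal G}^{(n)}_s$-measurable functionals $\Psi$ is unproblematic because the error estimate in the Martingale Problem Lemma is an $L^1$ bound $\sup_\nu E^Q_\nu |E^{(n)}_t|=O(n^{-1/2})$, which survives multiplication by a bounded random variable.

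There is, however, one genuine circularity in the written argument. To pass to the limit along a convergent subsequence you need the map $x\mapsto\bigl(g(x(t))-g(x(s))-\int_s^t{\cal L}^{(\infty)}g(x(r))\,dr\bigr)\Psi(x)$ to be $\mu$-a.s.\ continuous on $D[0,1]$, and you justify this by asserting that $\mu$ is concentrated on $C[0,1]$ ``since $R^2$ has continuous paths.'' But the fact that $\mu$ is the law of $R^2$ is precisely what this step of the proof is supposed to establish, so you cannot use it to justify the step. The standard fix does not require path continuity of the limit at all: for any probability measure $\mu$ on $D[0,1]$, the set of times $t$ at which $\mu(\{x: x(t^-)\neq x(t)\})>0$ is at most countable, so the evaluation maps $x\mapsto x(t)$ are $\mu$-a.s.\ continuous for all $t$ in the complement of a countable set; one then verifies the martingale identity for $s,t$ in this dense set of ``good'' times and extends to all $s<t$ by right continuity of the canonical paths in $D[0,1]$. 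Alternatively, one can argue directly that the subsequential limit is supported on $C[0,1]$ from the fact that the jump sizes of $Y^{(n)}$ tend to zero uniformly on compacts together with the tightness bound. Either repair is short, but as written the argument assumes what it needs to prove.
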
 
With this, we are ready to prove Theorem \ref{th:scaling_limit}. 
\begin{proof}[Proof of Theorem \ref{th:scaling_limit}]
The tightness of $\{X^{(n)}\}_{n\in\N}$ under $Q_0$ follows from Corollary \ref{cor:XQ_tight}. We need to identify the finite dimensional distributions. For this purpose, we will use the convergence of $\{Y^{(n)}\}_{n\in\N}$ to $R^2$. Suppose that $f \in C_b (\R)$. Let $f_+ = f ( |x|)$ and $f_-(x) = f (-|x|)$. Thus, $f_\pm$ are symmetric functions, coinciding with $f$ on positive half-line and negative half-line, respectively. Clearly, 
$$ E^Q_0[ f (X^{(n)}_t)] = E^Q_0 [f (X^{(n)}_t) \ch_{\{X_{nt} > 0\}}]+  E^Q_0 [f (X^{(n)}_t) \ch_{\{X_{nt} < 0\}}]+ f (0)Q_0 (X_{nt}=0).$$ 
The third term on the righthand side tends to $0$, as $X$ is not positive recurrent under $Q_0$. Due to the symmetry of $X$ under $Q_0$, we can write 
 \begin{align*} 
  E^Q_0 [f (X^{(n)}_t) \ch_{\{X_{nt} > 0\}}] &= \frac 12 \left ( E^Q_0 [f_+(X^{(n)}_t) ]-Q_0 (X_{nt}=0)\right) \\
   & =\frac 12 E^Q_0 [f_+(X^{(n)}_t) ] + o(1).
   \end{align*}
 Therefore, 
 $$  E^Q_0 [f (X^{(n)}_t)]  =  \frac 12 E^Q_0 [f_+(X^{(n)}_t) ] + \frac 12  E^Q_0 [f_-(X^{(n)}_t) ] + o(1),$$
However, since $f_\pm$ are symmetric, we can rewrite this in terms of $Y^{(n)}$. That is, 
$$ E^Q_0 [f (X^{(n)}_t)] = \frac 12 E^Q_0 [\frac 12 \left ( f_++f_-)(\sqrt{Y^{(n)}_t}) (\sqrt{Y^{(n)}_t}) \right) \ch_A (X_{nt}) ]+ o(1).$$ 
By choosing $A=\R$, we conclude that 
\begin{equation}
\label{eq:XQ_onemarg} 
 Q_0 (X^{(n)}_t \in \cdot )\Rightarrow JR_t.
\end{equation} 
In order to complete the proof, we need to show that the finite dimensional distributions of $X^{(n)}$ converge under $Q_0$. To this end, similarly to the definition of $\nu_n,~\nu^{(n)}_1$ and $\nu^{(n)}_2$, define the following signed Borel measures on $\R$: 
$$\rho_n (A)= E_0^Q[ \frac 12 (f_++f_-) (\sqrt{Y^{(n)}_t}) \ch_A (X_{nt})],$$
$\rho^{(n)}_1(A) = \sum_{\{z \in A: z/\sqrt{n} \in A}\rho_n (z)\}$, and $\rho^{(n)}_2 (A) = \sum_{\{z: z^2/n \in A}\} \rho_n (z)$. Observe then that \eqref{eq:XQ_onemarg} could be rewritten as 
\begin{equation} 
\label{eq:rhon_lim} 
 \rho^{(n)}_1 \Rightarrow E[ f(JR_t)\ch_{\cdot} (JR_t) ].
\end{equation} 
Fix $0<t_1< \dots <t_l\le 1$, and let $f_1=f,f_2,\dots f_l$ be in $C_b(\R)$. We have 
$$ E^Q_0[ \prod_{j=1}^l f_j (X^{(n)}_{t_j})]=\int E_z^Q [\prod_{j=2}^l f_j (X^{(n)}_{t_j-t_1}) ]d \rho_n(z).$$ 
We will decompose the integral into three domains as follows. Since $\rho^{(n)}_1$ converges to an absolutely continuous signed measure on $\R$, for any $\epsilon>0$, there exists $\delta>0$ such that $\rho_n ( D_n) \le \epsilon$ for all $n$ large, where $D_n = \{z: |z|\le \delta \sqrt{n}\}$. Let $D_{n,+}= \{z\in \Z: z > \sqrt{n} \delta\}$ and $D_{n,-} = - D_{n,+}$. Then 
$$ E^Q_0 \prod_{j=2}^l f_j (X^{(n)}_{t_j})= \int_{D_n \cup D_{n,+}\cup D_{n,-}} E_z^Q [ \cdots] d \rho_n(z).$$ 
Since the functions $f_1,\dots,f_l$ are uniformly bounded, it follows that 
$$ | \int_{D_n} E_z^Q [ \cdots ] d \rho_n | \le c \epsilon.$$ 
Next, if $z \in D_{n,+}$, then we can write
 $$ E_z^Q [ \prod_{j=2}^l f_j (X^{(n)}_{t_j}) ] = E_z^Q [ \prod_{j=2}^l f_j (X^{(n)}_{t_j}) ; \tau_0 > n] +  E_z^Q [ \prod_{j=2}^l f_j (X^{(n)}_{t_j}) ; \tau_0 \le n] .$$ 
 But, 
$$ |E_z^Q [ \prod_{j=1}^l f_j (X^{(n)}_{t_j}) ; \tau_0 \le n] |\le c Q_z(\tau_0 \le n) \le c Q_{\lfloor \sqrt{n} \delta \rfloor}(\tau_0 < \infty),$$
and where the constant $c$ depends only on $f_2,\dots,f_l$. By transience of $X$ under $Q$, it immediately follows that the probability on the righthand side tends to $0$ as $n\to\infty$. In addition, 
\begin{align*} E_z^Q [ \prod_{j=2}^l f_j (X^{(n)}_{t_j}) ; \tau_0 > n]  \\
&  =E_z^Q [ \prod_{j=2}^l f_j (\sqrt{ Y^{(n)}_{t_j}}); \tau_0 > n] \\
& = E_z^Q [ \prod_{j=2}^l f_j (\sqrt{ Y^{(n)}_{t_j}})]+ o(1),
\end{align*}
 where $|o(1)|\le c Q_{\lfloor \sqrt{n} \delta \rfloor}(\tau_0 < \infty)$. 
Combining the two results, we conclude that 
$$ \int_{D_{n,+}} E_z^Q [ \prod_{j=2}^l f_j (X^{(n)}_{t_j-t_1})] d \rho_n(z) =\int E_z^Q [ \prod_{j=2}^l f_j (\sqrt{ Y^{(n)}_{t_j}})] d \rho_{n,+}(z) +o(1),$$
where $\rho_{n,+}(A) = \rho_{n} (A \cap D_{n,+})$.
A similar argument shows that 
$$ \int_{D_{n,-}} E_z^Q [ \prod_{j=2}^l f_j (X^{(n)}_{t_j-t_1})] d \rho_n(z)  =\int E_z^Q [ \prod_{j=2}^l f_j (\sqrt{-Y^{(n)}_{t_j}})] d \rho_{n,-}(z) + o(1),$$
with $\rho_{n,+}(A) = \rho_{n}(A \cap D_{n,-})$.  Altogether,
\begin{align*}  \int E_z^Q [ \prod_{j=2}^l f_j (X^{(n)}_{t_j-t_1})] d \rho_n(z) &= \int E_{\rho_{n,+}} [ \prod_{j=2}^l f_j (\sgn (X^{(n)}_0) Y^{(n)}_{t_j-t_1}] + \\
& \quad +E_{\rho_{n,-}} [ \prod_{j=2}^l f_j (\sgn (X^{(n)}_0) Y^{(n)}_{t_j-t_1}] +  o(1) +\epsilon O(1).
\end{align*} 
However, since $\rho^{(n)}_{1,\pm}$  converge to $A \to \frac 12 E_0 [f(_++f_-)(\pm R_t) \ch_A(\pm R_t)]$, respectively, and hence $\rho^{(n)}_{2,\pm}$ converge as well, it follows from Proposition \ref{pr:Y2_conv}, and \eqref{eq:rhon_lim} that 
$$E_{\rho_{n,+}+\rho_{n,-}} [ \prod_{j=2}^l f_j (\sgn (X^{(n)}_0) Y^{(n)}_{t_j-t_1}]\to E[ f (JR_t)E_{R_t} \prod_{j=2}^l f_j (JR_{t_j-t_1});R_t \ge \delta].$$ 
Thus, 
$$ E^Q_0[ \prod_{j=1}^l f_j (X^{(n)}_{t_j})] =  E[ f (JR_t) \prod_{j=2}^l f_j (JR_{t_j-t_1});R_t > \delta]+o(1)+\epsilon O(1).$$ 
The result now follows by letting $\epsilon$ (and consequently $\delta$) to $0$, and from the absolute continuity of $JR_t$. 
\end{proof}
 \subsubsection{Convergence of the Polymer} 
 \begin{proof}[Proof of Theorem \ref{th:scaling_limit}-(ii)] 
 In light of \eqref{eq:imhof}, we need to show that for $F:D[0,1]\to \R$  continuous and bounded, 
\begin{equation} 
\label{eq:required_limit} 
\lim_{n\to\infty}  \frac{E^Q_0[  F (X^{(n)}) /\psi_{\beta} (X^{(n)}_1) ]}{Z_{\beta,n}}= \frac{ E[F(JR) / R_1] }{E [1/R_1]}.
\end{equation} 
To prove this limit, let's first assume that $F$ is also nonnegative. Then 
  $$ \sqrt{n} E^Q_0 [F(X^{(n)}) / \psi_{\beta} (X^{(n)}_1)] \ge E [ \frac{ \sqrt{n}}{\psi_{\beta}(X^{(n)}_1)} F (X^{(n)}); |X^{(n)}_1|>\epsilon].$$

Now  $\psi_{\beta}(X^{(n)}_1) = 1 - \beta |X^{(n)}_1|$, and so 
$$\liminf_{n\to\infty}  \frac{\sqrt{n} \ch_{(\epsilon ,\infty)} (|X^{(n)}_1|)}{\psi_{\beta}(X^{(n)}_1)}  = \liminf_{n\to\infty} \frac{\sqrt{n}\ch_{(\epsilon ,\infty)} (X^{(n)}_1)}{1-\beta \sqrt{n} |X^{(n)}_1|} \ge \frac{ \ch_{(2\epsilon,\infty)}(X^{(n)}_1)}{-\beta |X^{(n)}_1|}.$$ 
Now  
$$ \frac{ E^Q_0 [ F (X^{(n)}) /\psi_{\beta} (X^{(n)}_1) ]}{Z_{\beta,n}}   \ge   \frac{1}{\sqrt{n}Z_{\beta,n}}  E^Q_0 [ \frac{\sqrt{n}}{\psi_{\beta}(X^{(n)}_1)}  F(X^{(n)}) ;|X^{(n)}_1|>\epsilon ] .$$ 
It follows from Fatou's lemma that 
$$ \liminf_{n\to\infty} \frac{ E^Q_0 [ F (X^{(n)}) /\psi_{\beta} (X^{(n)}_1) ]}{Z_{\beta,n}}  \ge \sqrt{\frac{\pi}{2}} E^Q_0[ \frac{F (JR)}{R_1}; R_1>2\epsilon ] .$$
Since $ E\frac{1}{R_1} =  \sqrt{\frac{2}{\pi}}$, it  follows that 
$$ \liminf_{n\to\infty} \frac{E^Q_0[  F (X^{(n)}) /\psi_{\beta} (X^{(n)}_1) ]}{Z_{\beta,n}} \ge \frac{E[  F (R)/R_1;R_1>2\epsilon] }{E[1/R_1]}.$$ 
Since $\epsilon$ is arbitrary and $Q(R_1=0)=0$, it follows from the monotone convergence theorem that 
\begin{equation}
\label{eq:fatou}  \liminf_{n\to\infty}  \frac{E^Q_0[  F (X^{(n)}) /\psi_{\beta} (X^{(n)}_1) ]}{Z_{\beta,n}} \ge \frac{E^Q_0[ F (JR)/R_1] }{E [1/R_1]}.
\end{equation} 
Now assume that  $F$ is bounded and continuous. Let  $c=\inf F$, then $F-c$ is nonnegative, so it follows from \eqref{eq:fatou} that 
$$ \liminf_{n\to\infty}  \frac{E^Q_0[  (F-c) (X^{(n)}) /\psi_{\beta} (X^{(n)}_1) ]}{Z_{\beta,n}}\ge  \frac{E [ (F (JR)-c) /R_1] }{E [1/R_1]}.$$
However, the lefthand side is equal to $ \liminf_{n\to\infty} \frac{ E^Q_0 F(X^{(n)}) / \psi_{\beta}(X^{(n)}_1) }{Z_{\beta,n}}-c$, and the righthand side is equal to $\frac{E^Q_0[ (c-F (R)) /R_1] }{E^Q_1 [1/R_1]}-c$. Thus, 
\begin{equation}
\label{eq:lower_bd} \liminf_{n\to\infty}  \frac{E^Q_0[  F (X^{(n)}) /\psi_{\beta} (X^{(n)}_1) ]}{Z_{\beta,n}}\ge  \frac{E [ F (JR) /R_1] }{E [1/R_1]}.
\end{equation}
Similarly, letting $c =\sup F$, then  $c -F$ is nonnegative, and so from \eqref{eq:fatou} we obtain 
$$ \liminf_{n\to\infty}  \frac{E^Q_0[  (c-F) (X^{(n)}) /\psi_{\beta} (X^{(n)}_1) ]}{R_{\beta,n}}\ge  \frac{E[ (c-F (JR)) /R_1] }{E [1/R_1]}.$$
The lefthand side is equal to $ c- \limsup_{n\to\infty} \frac{ E^Q_0 [F(X^{(n)}) / \psi_{\beta}(X^{(n)}_1)] }{Z_{\beta,n}}$, and the righthand side is equal to $c-\frac{E [ F (JR) /R_1] }{E^Q_1 [1/R_1]}.$
  Thus, 
  \begin{equation}\label{eq:upper_bd}
    \limsup_{n\to\infty} \frac{ E^Q_0[ F(X^{(n)}) / \psi_{\beta}(X^{(n)}_1)] }{Z_{\beta,n}} \le \frac{E[ F (JR) /R_1] }{E [1/R_1]}.
  \end{equation} 
  The desired limit \eqref{eq:required_limit} follows from the inequalities \eqref{eq:lower_bd} and \eqref{eq:upper_bd}.
 \end{proof} 
\section{Appendix}
 \begin{proof}[Proof of Lemma \ref{lem:Ilambda}]
  \begin{enumerate}
   \item  $d=1$. We first assume $ \lambda>0$ and then extend by analytic continuation. 
     When $d=1$, we can change variables by letting $\cos \varphi = \frac{z+z^{-1}}{2}$ where $z=e^{i \varphi}$ and $\varphi \in [0,2\pi)$. Then $dz = i z d \theta$. Thus,  
     \begin{align*} 
      I(\lambda) &= \frac{1}{2\pi} \int_0^{2\pi} \frac{d \varphi }{\lambda + 1- \cos (\varphi)}\\
       & = \frac{1}{\pi} \int_{C_1}  \frac{1/ (iz)  d z }{2\lambda +2- z - 1/z }\\
       &= \frac{1}{\pi i} \int_{C_1} \frac{dz} {z(2\lambda+2) - z^2 -1}.
       \end{align*} 
      $$I(\lambda)=\frac{1}{2\pi} \int_{C_1}\frac{dz}{(2\lambda+2)z-z^2-1}.$$
       Let $z_1,z_2$ denote the solutions to   $f(z)=(2\lambda+2)z-z^2-1=0$. Then $z_1z_2 =1$ and $z_1+z_2 = 2\lambda+2$, which implies that exactly one solution is inside the unit circle, denote it by $z_1$. Then we have 
       $$ I(\lambda) = -\frac{1}{\pi i} \int_{C_1} \frac{1}{(z-z_1)(z-z_2)}  = - 2 \mbox{Res}(f;z_1)=  \frac{2}{z_2-z_1}.$$ 
       Finally, $\frac{z_2- z_1}{2} =  \frac{ \sqrt{ (2\lambda+2)^2 -4}}{2}= \sqrt{(\lambda +2) \lambda}$. 
       $$ I(\lambda) = \frac{1}{ \sqrt{\lambda (2+\lambda)}}.$$
       
       \item  $d=2$.  In what follows $c$ denotes a positive constant whose value may change. 
       Observe that $\Phi(\varphi) = \sin^2 (\varphi_1/2) + \sin^2 (\varphi_2/2)$. Therefore, 
       $$ I(\lambda) = 4\pi^{-2} \int_{[0,\pi/2]^2} \frac{1}{\lambda+ \sin^2 (\alpha_1) + \sin^2 (\alpha_2)} .$$
       Change variables to $x_1 = \sin (\alpha_1)$ and $x_2=  \sin (\alpha_2)$, to obtain $d x_j/ d\alpha_j = \cos (\alpha_j)=\sqrt{1-x_j^2}$. Therefore 
       \begin{align*}  I(\lambda) &=4\pi^{-2} \int_{[0,1]^2} \frac{1}{\lambda + x_1^2 + x_2^2} \frac{ 1}{\sqrt{1-x_1^2}}\frac{ 1}{\sqrt{1-x_2^2}}dx_1 dx_2\\
         & = 8 \pi^{-2} \int_0^1 \int_{0^x_1}  \frac{1}{\lambda + x_1^2 + x_2^2} \frac{ 1}{\sqrt{1-x_1^2}}\frac{ 1}{\sqrt{1-x_2^2}}.
         \end{align*} 
       Change to polar coordinates to obtain 
       \begin{equation} 
       \label{eq:2dI_polar} 
        I(\lambda) = 8 \pi^{-2} \int_0^{\pi/4} \int_0^{1/\cos \theta} \frac{r}{\lambda + r^2}(1+ h(r,\theta)) dr d\theta,
        \end{equation}
 where $h(r,\theta) =  \frac{1}{\sqrt{1-(r \cos \theta)^2}}\frac{1}{\sqrt{1-(r\sin \theta)^2}}-1\ge 0$.  We can break the integral into two. Since we're interested in the behavior  when 
 $\lambda$ is near the origin, let us fix some $\delta>0$, and assume that  $|\lambda|< \delta^2/2$. We then write the integration domain as the union of $A= \{r \le \delta, \theta \in [0,\pi/4]\}$ and its relative complement $B$, and write $I_{A}$ and $I_{B}$ for the integrals over the respective sub domains. On $B$, function $\frac{r}{\lambda+r^2}$ is uniformly bounded, Hence 

 $$  |I_B| \le c(\int_0^1 \frac{1}{\sqrt{1-x^2}}dx)^2\le c.$$
 We turn to integration on $A$. We integrate by parts: 
 \begin{align*} 
 I_{A} & =\frac{8}{\pi^2}  \int_0^{\pi/4}\int_0^\delta  \frac 12 \frac{\partial }{\partial r} \ln (\lambda + r^2) (1 + h) d r d\theta\\ 
 & =  \frac{4}{\pi^2} \int_0^{\pi/4} \left (  \ln (\lambda + r^2) (1+ h) |_{r=0}^{r=\delta} - \int_0^\delta  \ln (\lambda+ r^2) h'(r,\theta) dr \right) d\theta\\
 & = \frac{1}{\pi} \ln \frac{1}{\lambda} +I_{A_{*}} - I_{A_{**}},
  \end{align*} 
  where 
  \begin{align*} 
  I_{A_{*}} &=  \frac 4{\pi^2}\int_0^{\pi/4} \ln (\lambda + \delta^2) ( 1+ h(\delta,\theta) ) d \theta ; \mbox{ and } \\
  I_{A_{**}}&=\frac{4}{\pi^2} \int_0^{\pi/4}\int_0^\delta  \ln (\lambda+ r^2) h'(r,\theta) dr d\theta\mbox{ and } h'= \frac{\partial h}{\partial r}.
  \end{align*} 
  Clearly, $I_{A_*}$ is uniformly bounded.  As for $I_{A_{**}}$, first observe that  $h'$ is bounded  on $A$. As a result, 
 Now 
 $$ |I_{A_{**}}|\le c \int_0^\delta |\ln (\lambda + r^2)| dr.$$ 
 Note that if $\lambda = a + ib$, then $\ln (\lambda + r^2) =\frac 12  \ln |(a+r^2)^2 +b^2 |+ \alpha(\lambda,r)$ where $\alpha$ is equal to to $i$ times the argument of $\lambda + r^2$.  Thus, $|\ln (\lambda + r^2)| \le 2 |\ln |a+r^2|| + c$. If $a\ge 0$, $|\ln |a+r^2| |\le |\ln r^2|$, and the integrability of $\ln r^2$ over $[0,\delta]$ guarantees that  $I_{A_{**}}$ is uniformly bounded over $\lambda$ with nonnegative real part. If $a<0$,  then $a = -|a|$ and so $r^2 +a = (r- \sqrt{|a|})(r+\sqrt{|a|})$, so that $\ln |a+ r^2| \le  |\ln |r+\sqrt{|a|}| + |\ln |r-\sqrt{|a|}|\le \ln |r| + |\ln |r-|\sqrt{|a|}| $, and so,
 $$ \int_0^\delta | \ln |a+r^2|| dr = c + \int_0^{\delta} | \ln |r-\sqrt{|a|}|dr < c,$$ 
so that $I_{A_{**}}$ is again uniformly bounded over $\lambda$ with negative real part. \\
It remains to consider the imaginary part of $I(\lambda)$. It is easy to see that the imaginary part of $I_B =O(\Im \lambda)$. Thus, it remains to consider $\Im I_A$. Consider  $\lambda = -s + i\epsilon$ with $s\ge 0$, and leave the easier details for $s<0$ to the reader (in fact, this regime is not used in our paper). From \eqref{eq:2dI_polar} we observe that 
$$ -\Im I_A= \int_{A}  \frac{\epsilon} { (r^2 - s)^2 + \epsilon^2} (1+ h(r,\theta)) d r d\theta.$$ 
Now change variables to $u= r^2 -s$, then $r= \sqrt{s+u}$ and so $d r / du = \frac 12 \frac {1}{\sqrt{s+u}}$. Note also that $h(r,\theta) = h (\sqrt{s+u},\theta)$ jointly continuous and uniformly bounded on the domain of integration.  We then have that 
$$ -\Im I_A= \frac{4}{\pi^2}  \int_0^{\pi/4} \int_0^{\delta}  \frac{\epsilon}{\epsilon^2+ u^2}  (1+ h(\sqrt{s+u},\theta)) d u  d \theta .$$ 
Since $\frac{1}{\pi} \frac{\epsilon}{\epsilon^2+ u^2}$ is an approximation of the identity and $h$ is bounded on $A$, and remembering that $\im I_B = O(\epsilon)$,  it then follows from the dominated convergence theorem that 
\begin{align*} -\lim_{\epsilon\to 0} \Im I(\lambda) & = \frac 4 \pi \int_0^{\pi/4} (1+h (\sqrt{s},\theta)) d\theta\\
 & = \frac 4\pi \int_0^{\pi/4} \frac {d\theta }{{\sqrt{1-s \cos^2 \theta}}{\sqrt{1-s \sin^2 \theta}}}= 1+ o(1).
 \end{align*} 
  \end{enumerate}
          \end{proof}
             \begin{proof}[Proof of Lemma \ref{lem:martin}]
         We first show that $H_{-\infty} A_0=0$. To prove this observe that 
          $$ \Delta A_0(w) = \frac{1}{\pi^d} \int_{[0,\pi]^d}e^{i \scp{\varphi,w}}\sum_{|e|=1} \frac{1-e^{i\scp{\varphi,e}}}{ 2d  \Phi (\varphi)} d \varphi = \frac{1}{\pi^d} \int_{[0,\pi]^d}e^{i \scp{\varphi,w}} d \varphi =\delta_0(w).$$
          
We turn to uniqueness, which we prove according to dimension. Suppose $d \ge 3$.  Let $u$ be a positive  harmonic function for $H_{-\infty}$. Of course,    $u(0)=0$. It follows that $\Delta u (x) =  c_1 \delta_0(x) $, where $c_1= \frac{1}{2d} \sum_{|e|=1} u(e)>0$. Let now $k (x) = c_1 R_0 \delta_0 (x) $.  Then $\Delta (u+k)=0$. As is well-known,  the Martin boundary for $\Delta$ is spanned by $\ch$. Therefore $u+k = c_2 \ch$, or equivalently, $u= c_2 \ch -k$. Since $k$ is bounded, it follows that $u$ is bounded. Observing that $k (x) = c_3 P_x (\tau_0<\infty)$, and using the fact that $u(0)=0$, it follows that $u(x)= P_x (\tau_0=\infty)$, up to a positive multiplicative constant. 
    
    Next, suppose that $d=1,2$. We can rewrite \eqref{eq:killed_res} as 
    \begin{align}
    \nonumber
     R_\lambda^{-\infty}(x,y)&=-A_\lambda(x-y)+I(\lambda) - \frac{(I(\lambda)-A_\lambda(x))(I(\lambda)-A_\lambda(-y))}{I(\lambda)}\\
     \nonumber 
      &=-A_\lambda(x-y) + \frac{(A_\lambda(x)+A_\lambda(-y))I(\lambda)-A_\lambda(x)A_\lambda(-y)}{I(\lambda)}\\
\nonumber
       & = A_\lambda(x)+A_\lambda(-y)-A_\lambda(x-y) - \frac{A_\lambda(x)A_\lambda(-y)}{I(\lambda)}. 
      \end{align}   
    By recurrence,  $\lim_{\lambda \searrow 0} I(\lambda)=\infty$. Therefore  
       $$ R_{0}^{-\infty} (x,y) = A_0(x) + A_0(-y) - A_0(x-y).$$
     In addition, from the second equality in \eqref{eq:Alambda} we obtain that $\{ A_0(-y)-A_0(x-y):y \in \Z^d \}$ are the  Fourier coefficients  of a function in $L^1([0,2\pi]^d)$. The  Riemann-Lebesgue lemma implies then that  $\lim_{|y|\to \infty } A_0(-y) - A_0(x-y)=0$. Consequently, for any $x,x_0\in \Z^d-\{0\}$ we have 
     $$ \lim_{|y|\to \infty} \frac{R_0^{-\infty}(x,y)}{R_0^{-\infty}(x_0,y)}=\frac{A_0(x)}{A_0(x_0)},$$ 
     which proves that the Martin boundary for $H_{-\infty}$ is spanned by $A_0$. 
     \end{proof}

          \begin{proof}[Proof of Corollary \ref{cor:lasttimes}] ~
            \begin{enumerate}
       \item This follows immediately from Theorem \ref{th:Zbeta} as   
       $$E_{\beta,t} e^{-\lambda\I(t)}=Z_{\beta-\lambda,t}/Z_{\beta,t} \underset{t\to\infty}{\to} \frac{-\beta}{-\beta+\lambda},$$ which is the Laplace transform of an exponential random variable with rate $-\beta$. 
       \item Let $U$ be open subset of $(0,\infty)$. We will show that 
       $$ \liminf_{t\to\infty} P_{\beta,t} (\sigma_t \in U) \ge -\int_U \beta p_{\beta}(s,0,0) ds.$$
        Since by \eqref{eq:Zbeta} $-\beta p_{\beta} (s,0,0) ds$ is a probability density, this proves the claim. \\
        We first prove an auxiliary result. Suppose that $|e|=1$. Then clearly, 
          $$Z_{\beta,u}(e) = P_e (\tau >u) + \int_0^u Z_{\beta,u-s} d P_e(\tau \le s) \ge P_e(\tau>u)+Z_{\beta,u}P_e(\tau\le u).$$ 
      Dividing both sides by $Z_{\beta,u}$ and taking $u\to\infty$, we have 
 $$\psi_\beta(e) \ge \limsup_{u\to\infty} \frac{P_e(\tau >u)}{Z_{\beta,u}}+1.$$
  However, $\psi_\beta (e) =1-  \beta $, so that 
      $$\limsup_{u\to\infty} \frac{P_e(\tau >u)}{Z_{\beta,u}} \le - \beta.$$         
     $$ \frac{ Z_{\beta,u}(e)}{Z_{\beta,u}} = \frac{P_e (\tau>u)}{Z_{\beta,u}} + \int_0^ u \frac{Z_{\beta,u-s} }{Z_{\beta,u}} d P_e (\tau\le s).$$ 
     The left-hand side converges to $\psi_\beta(e)$, and therefore it follows from Fatou's lemma, that
      $$ \psi_\beta (e) \ge \liminf_{u\to\infty}  \frac{P_e (\tau>u)}{Z_{\beta,u}}  + \int_0^\infty d P_e (\tau\le s).$$ 
      From this, it follows that $\liminf_{u\to\infty}  \frac{P_e (\tau>u)}{Z_{\beta,u}}\ge - \beta$. 
      We conclude that
      \begin{equation} 
      \label{eq:how_far}
       \lim_{u\to\infty}  \frac{P_e (\tau>u)}{Z_{\beta,u}}=-\beta
       \end{equation}

        Suppose that $I= (a,b) \subset (0,\infty)$. Then we have 
        \begin{align} 
        \nonumber
          Z_{\beta,t} P_{\beta,t} (\sigma_t \in I)  &\ge E_0 \ch_0(X_a)e^{\beta \I (a)} \int_0^{b-a} e^{-\rho} e^{\beta \rho} P_{e} (\tau > t - a- \rho) d \rho\\
         \nonumber
          & = p_{\beta}(a,0,0) \int_0^{b-a} e^{-\rho+ \beta \rho} d\rho P_{e}(\tau > t-a-\rho) d\rho \\ 
          \label{eq:last_time} 
          & \ge p_{\beta}(a,0,0) (b-a) e^{-(b-a) (1+|\beta|)}P_e (\tau > t-b)
          \end{align} 
          For each $n$, partition $U$ into disjoint open  intervals each  of length $\le 2^{-n}$, with the $n+1$-th partition  embedded in the $n$-th partition (we omit a countable set on each partition). Let $f_n$ be the function which is constant on each element of the $n$-th partition. If this partition is given by the intervals $(a_{n,j},b_{n,j}),~j=1,\dots$, then 
          let $$f_n(s) = \sum_{j}\left (   p_{\beta}(a_{n,j} ,0,0)  e^{-(b_{n,j}-a_{n,j}) (1+|\beta|)}P_e (\tau > t-b_{n,j}\right) \ch_{(a_{n,j},b_{n,j})}(s).$$
          It follows from \eqref{eq:last_time} that 
          $$ P_{\beta,t} (\sigma_t \in U) \ge  \frac{1}{Z_{\beta,t}} \int_U f_n (s) ds.$$
          However, the continuity of $\beta_{\beta}(\cdot,0,0)$ and the fact that $P_e(\tau > t-\cdot)$ is nonincreasing, it follows that  $f_n (s) \to p_{\beta} (s,0,0) P_e (\tau>t-s)$ a.e. with respect to the Lebesgue measure. It then follows from Fatou's lemma that 
          $$P_{\beta,t} (\sigma_t \in U) \ge  \int_U p_{\beta}(s,0,0) \frac{ P_e (\tau> t-s)}{Z_{\beta,t}} ds.$$ 
          Applying Fatou's lemma again and \eqref{eq:how_far},  we obtain 
          $$ \liminf_{t\to\infty} P_{\beta,t} (\sigma_t \in U) \ge -\beta \int_U p_{\beta}(s,0,0) ds.$$ 
          The result follows. 
\item By Theorem \ref{th:small times} for every $T>0$, the distribution of the polymer measure  $P_{\beta,t}|_{{\cal F}_T}$ converges to the distribution of a Markov chain with generator $H_{\beta}^{\psi_{\beta}}$. By Theorem \ref{th:TR_theorem}, the latter is transient. In particular, the number of visits to the origin starting from the origin, $N_\infty$ is $\mbox{Geom}(\rho)$, with $\rho =Q_e(\tau_0<\infty)$, where $|e|=1$.  The weak convergence of the polymer measure and the continuity of $N_T$ guarantees that the distribution of $N_T$ also converges, and that the limit is stochastically dominated by $\mbox{Geom}(\rho)$. In addition, the transience of the limit, guarantees also that 
$\lim_{T\to\infty} \lim_{t\to\infty} P_{\beta,t} (N_T \in U) = Q_0 (N_\infty \in U)$. 
Next, observe that 
$$ P_{\beta,t} (N_t \in U) \ge  P_{\beta,t} (N_t \in U, \sigma_t \le T ) =P_{\beta,t} (N_T \in U) - P_{\beta,t} (\sigma_t > T).$$ 
However, by part ii, $\lim_{T\to\infty} \lim_{t\to\infty} P_{\beta,t} (\sigma_t>T)=0$. It then follows that 
$$\liminf_{t\to\infty}  P_{\beta,t} (N_t \in U) \ge Q_0( N_\infty \in U),$$ 
which proves that the distribution of $N_t$ under $P_{\beta,t}$ converges to $\mbox{Geom}(\rho)$. It remains to find $\rho$. 
We have 
$$ \lim_{t\to\infty}E^Q_0 \I(t) = E^Q_0 \sum_{j=0}^{N_\infty} J_j,$$
where $J_j$ are IID exponential random variables with rate $1-\beta$, the rate of jump from the origin by $Q$. However, the right-hand side is also equal to 
$$\frac{1}{\psi_\beta (0)} \int_0^\infty  p_{\beta}(s,0,0) \psi_{\beta}(0) ds   = I^\beta (0).$$ 
 But by \eqref{eq:Ilambda} and the Lemma \ref{lem:Ilambda} which guarantees that $\lim_{\lambda\to 0} I(\lambda)=\infty$, it follows that $ I^\beta (0)= \frac{1}{-\beta}$. Thus, 
$$ \frac{1}{-\beta} = \frac{E^Q_0 N_\infty}{1-\beta}= \frac{1}{\rho(1-\beta)},$$ and the result follows. 
           \end{enumerate}
     \end{proof}
       \subsection*{Acknowledgement} 
              The first author would like to express his deep gratitude to Mike Cranston and Stanislav Molchanov for introducing him to the model and their work at an early stage. 
  \providecommand{\bysame}{\leavevmode\hbox to3em{\hrulefill}\thinspace}
\providecommand{\MR}{\relax\ifhmode\unskip\space\fi MR }
\providecommand{\MRhref}[2]{%
  \href{http://www.ams.org/mathscinet-getitem?mr=#1}{#2}
}
\providecommand{\href}[2]{#2}

\end{document}